\begin{document}
\input xy
\xyoption{all}

\renewcommand{\mod}{\operatorname{mod}\nolimits}
\newcommand{\ind}{\operatorname{inj.dim}\nolimits}
\newcommand{\id}{\operatorname{id}\nolimits}
\newcommand{\Mod}{\operatorname{Mod}\nolimits}
\newcommand{\End}{\operatorname{End}\nolimits}
\newcommand{\rep}{\operatorname{rep}\nolimits}
\newcommand{\Ext}{\operatorname{Ext}\nolimits}
\newcommand{\Tor}{\operatorname{Tor}\nolimits}
\newcommand{\Hom}{\operatorname{Hom}\nolimits}
\newcommand{\aut}{\operatorname{Aut}\nolimits}
\renewcommand{\Im}{\operatorname{Im}\nolimits}
\newcommand{\Ker}{\operatorname{Ker}\nolimits}
\newcommand{\Iso}{\operatorname{Iso}\nolimits}
\newcommand{\coker}{\operatorname{Coker}\nolimits}
\newcommand{\Cone}{{\operatorname{Cone}\nolimits}}
\renewcommand{\Vec}{{\operatorname{Vec}\nolimits}}
\newcommand{\pd}{\operatorname{proj.dim}\nolimits}
\newcommand{\add}{\operatorname{add}\nolimits}
\newcommand{\pr}{\operatorname{pr}\nolimits}
\newcommand{\cc}{{\mathcal C}}
\newcommand{\ce}{{\mathcal E}}
\newcommand{\cs}{{\mathcal S}}
\newcommand{\cf}{{\mathcal F}}
\newcommand{\cx}{{\mathcal X}}
\newcommand{\cy}{{\mathcal Y}}
\newcommand{\cl}{{\mathcal L}}
\newcommand{\ct}{{\mathcal T}}
\newcommand{\cu}{{\mathcal U}}
\newcommand{\cm}{{\mathcal M}}
\newcommand{\cv}{{\mathcal V}}
\newcommand{\ch}{{\mathcal H}}
\newcommand{\ca}{{\mathcal A}}
\newcommand{\mcr}{{\mathcal R}}
\newcommand{\cb}{{\mathcal B}}
\newcommand{\ci}{{\mathcal I}}
\newcommand{\cj}{{\mathcal J}}
\newcommand{\cp}{{\mathcal P}}
\newcommand{\cg}{{\mathcal G}}
\newcommand{\cw}{{\mathcal W}}
\newcommand{\co}{{\mathcal O}}
\newcommand{\cd}{{\mathcal D}}
\newcommand{\cn}{{\mathcal N}}
\newcommand{\ck}{{\mathcal K}}
\newcommand{\calr}{{\mathcal R}}
\newcommand{\ol}{\overline}
\newcommand{\ul}{\underline}
\newcommand{\cz}{{\mathcal Z}}
\newcommand{\st}{[1]}
\newcommand{\ow}{\widetilde}
\renewcommand{\P}{\mathbf{P}}
\newcommand{\pic}{\operatorname{Pic}\nolimits}
\newcommand{\Spec}{\operatorname{Spec}\nolimits}
\newtheorem{theorem}{Theorem}[section]
\newtheorem{acknowledgement}[theorem]{Acknowledgement}
\newtheorem{algorithm}[theorem]{Algorithm}
\newtheorem{axiom}[theorem]{Axiom}
\newtheorem{case}[theorem]{Case}
\newtheorem{claim}[theorem]{Claim}
\newtheorem{conclusion}[theorem]{Conclusion}
\newtheorem{condition}[theorem]{Condition}
\newtheorem{conjecture}[theorem]{Conjecture}
\newtheorem{construction}[theorem]{Construction}
\newtheorem{corollary}[theorem]{Corollary}
\newtheorem{criterion}[theorem]{Criterion}
\newtheorem{definition}[theorem]{Definition}
\newtheorem{example}[theorem]{Example}
\newtheorem{exercise}[theorem]{Exercise}
\newtheorem{lemma}[theorem]{Lemma}
\newtheorem{notation}[theorem]{Notation}
\newtheorem{problem}[theorem]{Problem}
\newtheorem{proposition}[theorem]{Proposition}
\newtheorem{remark}[theorem]{Remark}
\newtheorem{solution}[theorem]{Solution}
\newtheorem{summary}[theorem]{Summary}
\newtheorem*{thm}{Theorem}

\makeatletter
\@addtoreset{equation}{section}
\makeatother
\renewcommand{\theequation}{\arabic{section}.\arabic{equation}}

\def \bp{{\mathbf p}}
\def \bA{{\mathbf A}}
\def \bL{{\mathbf L}}
\def \bF{{\mathbf F}}
\def \bS{{\mathbf S}}
\def \bC{{\mathbf C}}
\def \bD{{\mathbf D}}
\def \Z{{\Bbb Z}}
\def \F{{\Bbb F}}
\def \C{{\Bbb C}}
\def \N{{\Bbb N}}
\def \Q{{\Bbb Q}}
\def \G{{\Bbb G}}
\def \X{{\Bbb X}}
\def \P{{\Bbb P}}
\def \K{{\Bbb K}}
\def \E{{\Bbb E}}
\def \A{{\Bbb A}}
\def \BH{{\Bbb H}}
\def \T{{\Bbb T}}

\title[Modified Ringel-Hall algebras]{Modified Ringel-Hall algebras, naive lattice algebras and lattice algebras}
\author[Lin]{Lin Ji}
\address{Department of Mathematics, Sichuan University, Chengdu 610064, P.R.China and
Department of Mathematics and Statistics, Fuyang Normal University, Fuyang 236037, P.R.China}
\email{jlin@fync.edu.cn}

\subjclass[2010]{16W50, 18E10,18E30}
\keywords{Modified Ringel-Hall algebras, Naive lattice algebras, Lattice algebras, Hereditary abelian categories.}
\thanks{This work was supported partially by the National Natural Science Foundations of China (Grant No. 11701473) and Youth Talent Foundation of Fuyang Normal University (Grant No. rcxm201803)}
\begin{abstract}
For a given hereditary abelian category satisfying some finiteness conditions, in certain twisted cases it is shown that the modified Ringel-Hall algebra is isomorphic to the naive lattice algebra and there exists an epimorphism from the modified Ringel-Hall algebra to the lattice algebra. Furthermore, the kernel of this epimorphism is described explicitly. Finally, we show that the naive lattice algebra is invariant under the derived equivalences of hereditary abelian categories.
\end{abstract}

\maketitle
\section{Introduction}\label{section introduction}

The Ringel-Hall algebra $\ch(\ca)$ associated to an abelian category $\ca$ satisfying some finiteness conditions is a vector space over $\mathbb{Q}$ with basis parameterized by the isomorphism classes of objects in the category $\ca$. And the positive part of the quantum enveloping algebra can be realized by the Ringel-Hall algebras (see \cite{Gr,Lu,R2}). Since then much of the work was concentrated on the realization of the whole quantum group, see for example \cite{P1,PX1,PX2,SV,R3,Hub}.

Xiao~\cite{X} realized the whole quantum group via the reduced Drinfeld double of a Hall algebra by piecing together two Borel parts. Furthermore, Cramer~\cite{Cr} proved that the  Drinfeld double Hall algebra is invariant under the derived equivalences. Recently, Bridgeland \cite{Br} considered the Hall algebras over the category of $\mathbb{Z}/2$-graded complexes with projective components  which  provide  natural realizations of the whole quantum groups. Lu and Peng~\cite{LuP} introduced the modified Ringel-Hall algebra $\cm\ch_{\mathbb{Z}/2}(\ca)$ of $\ca$ over the category of $\mathbb{Z}/2$-graded complexes, where $\ca$ is a hereditary abelian category which may not have enough projectives. Moreover,  it is proved that the componentwise twisted modified Ringel-Hall algebra $\cm\ch_{\mathbb{Z}/2,tw}(\ca)$ is isomorphic to the Drinfeld double Hall algebra of $\ch_{tw}^{e}(\ca)$ which generalizes the construction of Bridgeland \cite{Br} and Gorsky \cite{Gor13}.  In our previous work \cite{LinP}, we extended the construction of~\cite{LuP} to define the modified Ringel-Hall algebra $\cm\ch(\ca)$ for the category of bounded complexes $\cc^b(\ca)$ and showed that the twisted modified Ringel-Hall algebra is invariant under derived equivalences.

For a finitary hereditary abelian category $\mathcal{A}$, Kapranov~\cite{Kap} introduced the {\it lattice algebra} $\mathcal{L}(\ca)$ and the {\it naive lattice algebra} $\mathcal{N}(\ca)$ for $\ca$ and proved that the lattice algebra $\mathcal{L}(\mathcal{A})$ is invariant under derived equivalences of hereditary abelian categories.
On the other hand, one may also have the so-called {\it derived Hall algebra} $\mathcal{DH}(\mathcal{A})$ of $\mathcal{A}$ in the sense of ~\cite{T, XX1}.
The relation between lattice algebra and derived Hall algebra has been investigated by Sheng and Xu in~\cite{SX}.
For the hereditary abelian category $\ca$ with enough projectives,  Zhang~\cite{Z} introduced the $m$($m=0$ or $m>2$)-lattice algebra and proved that it is isomorphic to the Bridgeland's Hall algebra of $m$-cyclic projective complexes of $\ca$. In particular, the so-called $0$-lattice algebra is the naive lattice algebra.

In the present paper, we mainly study the relations among the modified Ringel-Hall algebra $\cm\ch(\ca)$, the naive lattice algebra $\cn(\ca)$ and the lattice algebra $\cl(\ca)$ for a hereditary abelian category $\ca$ which may not have enough projectives but satisfies the following {\it finiteness conditions} over a finite field $k$:
\begin{itemize}
\item [(1)] $\ca$ is an essentially small $k$-linear category,\\
\item [(2)] $\ca$ is finitary, {\it i.e.} $\dim_k\Hom_{\ca}(M,N)<\infty$ and $\dim_{k}\Ext^1_{\ca}(M,N)<\infty$ for any $M, N\in\ca$, \\
\end{itemize}
In particular, we give the realizations of the naive lattice algebra and the lattice algebra via the modified Ringel-Hall algebra respectively. As an application, we obtain the derived invariance of the naive lattice algebras. Specifically, we have the following main results of this paper.

\begin{theorem}[Theorem \ref{theorem of isomorphism of naive lattice algebras and modified hall algebras}]\label{modified and naive}
The naive lattice algebra $\cn(\ca)$ is isomorphic to the componentwise twisted modified Ringel-Hall algebra $\cm\ch_{ctw}(\ca)$.
\end{theorem}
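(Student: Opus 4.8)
The plan is to construct an explicit algebra isomorphism $\cn(\ca)\to\cm\ch_{ctw}(\ca)$ by matching the generators of the naive lattice algebra with suitable elements of the modified Ringel-Hall algebra. Recall that the naive lattice algebra $\cn(\ca)$ is, by definition, generated by symbols associated to objects of $\ca$ (the "Hall part", coming essentially from $\ch(\ca)$) together with a copy of a twisted group algebra of the Grothendieck group $K_0(\ca)$ (the "lattice part"), subject to the Euler-form twisted commutation relations between the two parts and the Hall relations within the Hall part. On the other side, the modified Ringel-Hall algebra $\cm\ch(\ca)$ of bounded complexes contains, via the stalk-complex embeddings, a copy of $\ch(\ca)$ in degree $0$, and the acyclic complexes $K_A$ (two-term complexes $A\xrightarrow{1}A$) become invertible after the localization built into $\cm\ch(\ca)$, generating precisely a group algebra of $K_0(\ca)$. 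So the candidate map sends a Hall generator $[M]$ to the class of the stalk complex $C_M$ (placed in degree $0$), and sends the lattice generator attached to a class $\hat\alpha\in K_0(\ca)$ to the corresponding product of classes $[K_A]^{\pm1}$; the componentwise twist is chosen exactly so that the multiplication in $\cm\ch_{ctw}(\ca)$ reproduces the twisted multiplication of $\cn(\ca)$.

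First I would fix notation and recall precisely, from \cite{LinP} and from Kapranov's original definition \cite{Kap}, the defining presentations of both algebras: a set of generators and relations for $\cn(\ca)$, and the description of $\cm\ch(\ca)$ in terms of isomorphism classes of bounded complexes modulo the "acyclic" ideal, together with the known structural facts (the $K_A$ are central up to the Euler form, invertible, and $[C_M^{\bullet}]$ for stalk complexes satisfy the Hall product formula). Second, I would write down the proposed correspondence on generators and verify that it respects all the defining relations of $\cn(\ca)$ — this breaks into three checks: (i) the Hall relations among the $[C_M]$, which hold in $\cm\ch(\ca)$ essentially by the computation of products of stalk complexes; (ii) the group-algebra relations among the lattice generators, which follow from $[K_A][K_B]=v^{?}[K_{A\oplus B}]$ and invertibility in $\cm\ch$; and (iii) the cross relations $[K_A]\,[C_M] = v^{\langle\!\langle \cdot,\cdot\rangle\!\rangle}\,[C_M]\,[K_A]$, which is where the \emph{componentwise} twist is essential, since the symmetrized Euler form appears only after twisting each homogeneous component separately. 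Third, I would check surjectivity — the classes of stalk complexes and the $[K_A]^{\pm1}$ generate $\cm\ch(\ca)$, a fact already available from the structure theory of \cite{LinP} (every bounded complex is, in the modified algebra, a product of such generators up to acyclic complexes). Finally, for injectivity I would exhibit an inverse map by sending $[C_M]\mapsto$ the Hall generator and $[K_A]\mapsto$ the lattice generator, check it is well-defined on $\cm\ch_{ctw}(\ca)$ by verifying that the defining relations of the modified Ringel-Hall algebra (the localization relations and the complex-multiplication relations) map to valid identities in $\cn(\ca)$, and observe that the two maps are mutually inverse on generators, hence everywhere.

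The main obstacle I anticipate is \textbf{bookkeeping the twists correctly}: the modified Ringel-Hall algebra $\cm\ch(\ca)$ carries a multiplication that already involves Euler forms from the bounded-complex structure, and the componentwise twist in $\cm\ch_{ctw}(\ca)$ adds another layer; matching this precisely against the twist built into $\cn(\ca)$ (which uses the symmetric Euler form $(\alpha,\beta)=\langle\alpha,\beta\rangle+\langle\beta,\alpha\rangle$) requires a careful degree-by-degree comparison and a correct choice of the twisting cocycle. A related subtlety is that $\ca$ is not assumed to have enough projectives, so one cannot use Bridgeland's projective-complex model; one must work with all bounded complexes and use the localization in $\cm\ch$ to invert the $K_A$, and verify that the relations of $\cn(\ca)$ — in particular that the lattice part really is a \emph{group} algebra — survive this localization. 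Once the dictionary on generators is pinned down and the twist is calibrated, the relation-checking is a finite list of computations using the product formulas already established in \cite{LinP}, and the construction of the inverse makes injectivity automatic rather than requiring a dimension count.
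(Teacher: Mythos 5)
There is a genuine gap, and it starts with your description of $\cn(\ca)$. Kapranov's naive lattice algebra is not ``a Hall part plus one copy of a twisted group algebra of $K_0(\ca)$'': it is generated by a $\mathbb{Z}$-indexed family of copies of the \emph{extended} twisted Ringel--Hall algebra $\ch^e_{tw}(\ca)$, i.e.\ by symbols $Y_A^{(n)}$ \emph{and} $K_\alpha^{(n)}$ for every $n\in\mathbb{Z}$, with adjacent copies glued by the Hopf pairing and non-adjacent copies commuting. Your candidate map sends Hall generators only to stalk complexes ``placed in degree $0$'' and the lattice generators to a single family $[K_A]^{\pm1}$; such a map lands in the subalgebra generated by $U_{A,0}$ and one level of $K_{\alpha,n}$ and cannot be surjective onto $\cm\ch_{ctw}(\ca)$, whose generators $U_{A,n},K_{\alpha,n}$ range over all $n$. (What you describe is much closer to $\ch^e_{tw}(\ca)$ itself, or to the lattice algebra $\cl(\ca)$, which has a single torus $K_\alpha$ --- but that object is only a \emph{quotient} of $\cm\ch_{rtw}(\ca)$, not isomorphic to it; see the paper's Theorem on the epimorphism $\varphi$.)

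Consequently your three relation checks (i)--(iii) omit the only computation that carries real content: the cross-degree relation between adjacent copies, coming from Green's comultiplication and the Hopf pairing,
\begin{equation*}
Y_B^{(n)}Y_A^{(n+1)}=\sum_{M,N}\gamma_{AB}^{MN}\frac{a_Aa_B}{a_Ma_N}\sqrt{\langle\widehat{M}-\widehat{N},\,\widehat{M}-\widehat{B}\rangle}\;Y_N^{(n+1)}Y_M^{(n)}K_{\widehat{B}-\widehat{M}}^{(n+1)},
\end{equation*}
which must be matched against the product $U_{B,n}\diamond U_{A,n+1}$ in $\cm\ch(\ca)$, where acyclic complexes $K_{\widehat{B}-\widehat{M},n+1}$ appear and the componentwise twist has to be calibrated so the coefficients agree. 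The paper's proof is exactly this: extract an explicit presentation of $\cn(\ca)$ by generators and relations from Kapranov's ordered-product basis (his Proposition 1.5.3), compute the corresponding relations in $\cm\ch_{ctw}(\ca)$ using the known presentation and basis of $\cm\ch(\ca)$, and observe the two presentations coincide under $U_{A,n}\mapsto Y_A^{(n)}$, $K_{\alpha,n}\mapsto K_\alpha^{(n)}$. Your overall strategy (dictionary on generators, relation checking, inverse map) is the right one, but to make it work you must first replace your model of $\cn(\ca)$ by the correct $\mathbb{Z}$-graded one, send the degree-$n$ Hall generators to degree-$n$ stalk complexes and the degree-$n$ torus to $K_{\alpha,n}$, and carry out the adjacent-degree computation; everything else is routine.
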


\begin{theorem}[Theorem \ref{theorem epimorphism from modified to lattice}]\label{modifed and lattice}
Let $\cm\ch_{rtw}(\ca)$ be the relative twisted modified Ringel-Hall algebra and $\cl_*(\ca)$ be the Drinfeld dual lattice algebra which is isomorphic to $\cl(\ca)$. There is an epimorphism $\varphi:\cm\ch_{rtw}(\ca)\rightarrow\cl_*(\ca)$, given by $$K_{\alpha, n}\mapsto K^{(-1)^n}_{\alpha}, U_{A,n}\mapsto Z^{(n)}_{A}$$ for any $A\in\Iso(\ca)$, $n\in\mathbb{Z}$ and $\alpha\in K_0(\ca)$. Moreover, $\cl_*(\ca)\cong \cm\ch_{rtw}(\ca)/I$, where $I$ is the ideal of $\cm\ch_{rtw}(\ca)$ generated by the set $$\{K_{\alpha, n+1}K_{\beta, n}-K_{\alpha-\beta, n+1}|\alpha, \beta\in K_{0}(\ca), n\in\mathbb{Z}\}.$$
\end{theorem}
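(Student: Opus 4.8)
The plan is to construct the map $\varphi$ explicitly on generators and verify it respects all defining relations of the relative twisted modified Ringel-Hall algebra $\cm\ch_{rtw}(\ca)$, then identify its kernel. First I would recall the presentations of both algebras: $\cm\ch_{rtw}(\ca)$ is generated by the symbols $U_{A,n}$ (for $A\in\Iso(\ca)$, $n\in\Z$) together with the group-like elements $K_{\alpha,n}$ (for $\alpha\in K_0(\ca)$, $n\in\Z$), subject to the componentwise-relative-twisted Hall multiplication in each degree, the commutation relations between the $K$'s and the $U$'s, and the relations coming from acyclic complexes; on the other side, $\cl_*(\ca)$ (the Drinfeld dual lattice algebra, already known by hypothesis to be isomorphic to $\cl(\ca)$) is generated by $Z^{(n)}_A$ and the torus elements $K^{\pm1}_\alpha$. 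The assignment $K_{\alpha,n}\mapsto K^{(-1)^n}_\alpha$, $U_{A,n}\mapsto Z^{(n)}_A$ is forced by the desired formula, so the content is well-definedness and surjectivity.

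The key steps, in order: (1) Check that the Hall-type product relations within a fixed degree $n$ are preserved — this reduces to comparing the structure constants of $\cm\ch_{rtw}(\ca)$ in degree $n$ with those defining the products $Z^{(n)}_A Z^{(n)}_B$ in $\cl_*(\ca)$; by construction of the relative twist this should match the lattice algebra multiplication up to the Euler-form/twisting factor, using that $\ca$ is hereditary and finitary. (2) Check the cross-degree relations: the products $U_{A,n}U_{B,m}$ for $n\neq m$ in $\cm\ch_{rtw}(\ca)$ are governed by extensions of complexes, i.e. by $\Ext^1_{\ca}$ and $\Hom_{\ca}$ data with a prescribed twisting; these must map to the braiding/commutation relations defining $\cl_*(\ca)$, which encode exactly the same homological bilinear data — here one uses Riedtmann–Peng–Xiao-type counting (Green's formula / the Hall algebra associativity) together with the compatibility of the relative twist. (3) Verify the $K$-commutation relations go over correctly: conjugating $U_{A,n}$ by $K_{\alpha,m}$ scales by a power of the Euler form evaluated on $\alpha$ and $\hat A$, and the sign $(-1)^n$ built into $K^{(-1)^n}_\alpha$ is exactly what reconciles this with the corresponding relation in $\cl_*(\ca)$ — this is the delicate bookkeeping point. (4) Surjectivity is then immediate since the images of the generators $U_{A,n}, K_{\alpha,n}$ already generate $\cl_*(\ca)$.

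For the kernel: let $I$ be the ideal generated by $\{K_{\alpha,n+1}K_{\beta,n}-K_{\alpha-\beta,n+1}\}$. I would first check $I\subseteq\Ker\varphi$, which is a one-line computation: $\varphi$ sends the generator to $K^{(-1)^{n+1}}_\alpha K^{(-1)^n}_\beta - K^{(-1)^{n+1}}_{\alpha-\beta} = K^{(-1)^{n+1}}_\alpha K^{(-1)^{n+1}}_{-\beta} - K^{(-1)^{n+1}}_{\alpha-\beta}=0$ in $\cl_*(\ca)$, where the torus is the group algebra of $K_0(\ca)$. For the reverse inclusion, the strategy is to show that $\cm\ch_{rtw}(\ca)/I$ has a spanning set (indexed the same way as a natural basis of $\cl_*(\ca)$, i.e. by a torus element $K^{\pm}_\alpha$ times a PBW-type monomial in the $Z^{(n)}_A$) that $\varphi$ maps bijectively onto a basis of $\cl_*(\ca)$; modding out by $I$ collapses the two-variable family of degrees $\{K_{\alpha,n}\}_{n\in\Z}$ down to the single $\Z$-graded torus $K_0(\ca)$ with the sign encoding the parity, which is exactly the torus of $\cl_*(\ca)$. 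Concretely, one shows that in $\cm\ch_{rtw}(\ca)/I$ every element can be written with all $K$-symbols collected on the left and collapsed via the relation to a single $K_{\alpha,n_0}$ for one fixed reference index, so the quotient has the same size as $\cl_*(\ca)$, and then a dimension/basis count forces $\Ker\varphi=I$.

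The main obstacle I anticipate is Step (3) combined with the kernel's reverse inclusion: getting the signs $(-1)^n$ and the relative-twist exponents to line up precisely across all degrees simultaneously — in particular verifying that the relation $K_{\alpha,n+1}K_{\beta,n}=K_{\alpha-\beta,n+1}$ is the \emph{only} extra relation needed, i.e. that no further identifications are hidden in the cross-degree products — will require a careful normal-form argument, presumably leveraging the isomorphism $\cn(\ca)\cong\cm\ch_{ctw}(\ca)$ from Theorem \ref{modified and naive} together with the known presentation of $\cl_*(\ca)$ and the fact that the naive lattice algebra surjects onto the lattice algebra with exactly this kind of "merge the tori" kernel.
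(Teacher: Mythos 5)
Your proposal is correct and follows essentially the same route as the paper: the homomorphism is verified on the presentations of $\cm\ch_{rtw}(\ca)$ and $\cl_*(\ca)$ (where all relations except the $K$-commutation bookkeeping with the sign $(-1)^n$ literally coincide), and the kernel is identified by comparing the explicit monomial bases of the two algebras and collapsing the $K$-symbols via the relation $K_{\alpha,n+1}K_{\beta,n}=K_{\alpha-\beta,n+1}$. The paper phrases the reverse inclusion $\Ker\varphi\subseteq I$ as a direct telescoping-plus-induction argument on kernel elements rather than as a spanning-set count for the quotient, but this is the same normal-form idea.
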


\begin{theorem}[Theorem \ref{theorem invariance of naive and lattice}]\label{main invariance in introduction}
Let $\cb$ be  a hereditary abelian $k$-category satisfying the finiteness conditions $(1)$-$(2)$. If there exists a derived equivalence $F: D^b(\ca)\rightarrow D^b(\cb)$, then we have the following  isomorphism of algebras
$$\cn(\ca)\cong \cn(\cb).$$
\end{theorem}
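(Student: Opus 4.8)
The plan is to deduce the derived invariance of the naive lattice algebra $\cn(\ca)$ from the two identifications already in hand, namely Theorem \ref{modified and naive}, which gives $\cn(\ca)\cong\cm\ch_{ctw}(\ca)$, together with the derived invariance of the (componentwise) twisted modified Ringel-Hall algebra established in our previous work \cite{LinP}. Concretely, suppose $F\colon D^b(\ca)\xrightarrow{\ \sim\ }D^b(\cb)$ is a triangle equivalence. First I would recall from \cite{LinP} that $F$ induces an isomorphism of algebras $\Phi_F\colon\cm\ch_{tw}(\ca)\xrightarrow{\ \sim\ }\cm\ch_{tw}(\cb)$ between the twisted modified Ringel-Hall algebras. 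The one genuine point to check is that this isomorphism is compatible with the \emph{componentwise} twist rather than merely the global twist, i.e.\ that $\Phi_F$ restricts (or can be renormalized) to an isomorphism $\cm\ch_{ctw}(\ca)\xrightarrow{\ \sim\ }\cm\ch_{ctw}(\cb)$. Granting this, the composition
$$\cn(\ca)\ \xrightarrow{\ \sim\ }\ \cm\ch_{ctw}(\ca)\ \xrightarrow{\ \Phi_F\ }\ \cm\ch_{ctw}(\cb)\ \xrightarrow{\ \sim\ }\ \cn(\cb)$$
is the desired algebra isomorphism.

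The key steps, in order, are: (i) record that $F$ sends the Euler form data of $\ca$ to that of $\cb$ — more precisely $F$ induces an isometry $K_0(\ca)\cong K_0(\cb)$ for the symmetrized Euler form, which is what controls the twisting cocycles; (ii) invoke the main theorem of \cite{LinP} to get the algebra isomorphism on the globally twisted modified Ringel-Hall algebras and unwind how $\Phi_F$ acts on the generators $K_\alpha$ and on the classes of acyclic/stalk complexes; (iii) analyze the difference between the global twist and the componentwise twist. The componentwise twist re-scales the multiplication of two homogeneous elements by a power of $\sqrt{q}$ depending separately on the two $\mathbb{Z}/2$-degrees (or on the individual components of the complexes), whereas $\Phi_F$ is built from a triangle functor that need not preserve the splitting of a complex into its components. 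So one must show that the extra scalar factors introduced by passing from global to componentwise twist match up on both sides under $\Phi_F$; (iv) finally transport along the isomorphisms of Theorem \ref{modified and naive} applied to $\ca$ and to $\cb$, checking that the generators $Z_A^{(n)}$, $K_{\alpha,n}$ of $\cn$ correspond on the nose to the images under $\Phi_F$ of the corresponding modified-Hall generators.

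I expect step (iii) to be the main obstacle: reconciling the componentwise twist with a derived equivalence that does not see individual homological degrees. The resolution I anticipate is that on the image of $D^b(\ca)$ inside the modified Ringel-Hall algebra, the componentwise twisting cocycle differs from the global one only by a coboundary — a function of the total class $\widehat{M}\in K_0(\ca)$ — because on a \emph{hereditary} category every object of $D^b(\ca)$ is (up to shift) a direct sum of stalks, so the degreewise Euler numbers are determined by the total class together with the shift parity. This is precisely the phenomenon already exploited in \cite{Kap} and \cite{LuP}. Thus the renormalization needed to pass between the two twists is by a diagonal (group-like) automorphism that is manifestly preserved by $\Phi_F$, and the argument closes. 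A secondary, more routine obstacle is bookkeeping the signs and the powers of $\sqrt{q}$ through the three composed isomorphisms; I would organize this by checking the claim on the algebra generators $K_{\alpha,n}$ and $U_{A,n}$ (equivalently $Z_A^{(n)}$) only, since these generate $\cn$, and then appealing to the fact that all maps in sight are algebra homomorphisms.
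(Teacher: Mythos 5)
Your overall strategy (reduce to Theorem \ref{modified and naive} plus a derived-invariance statement for a modified Ringel--Hall algebra) is the right one and is essentially the paper's, but the mechanism you propose in step (iii) for passing from the globally twisted algebra $\cm\ch_{tw}(\ca)$ of \cite{LinP} to the componentwise twisted algebra $\cm\ch_{ctw}(\ca)$ does not work. You claim the two twisting cocycles differ by a coboundary, i.e.\ by a symmetric factor of the form $f(\widehat{M})f(\widehat{N})/f(\widehat{M}+\widehat{N})$, so that a diagonal (group-like) rescaling converts $\Phi_F$ into an isomorphism of the componentwise twisted algebras. This is false: a coboundary twist is symmetric in its two arguments and therefore cannot alter any commutation relation, yet in $\cm\ch_{tw}(\ca)$ the acyclic generators are central on each degree, $K_{\alpha,n}*U_{A,n}=U_{A,n}*K_{\alpha,n}$ (Proposition \ref{proposition of relations in twist MH(A)}), whereas in $\cm\ch_{ctw}(\ca)$ one has $K_{\alpha,n}\star U_{A,n}=\sqrt{(\alpha,\widehat{A})}\,U_{A,n}\star K_{\alpha,n}$ (Proposition \ref{proposition twisted multiplication in modified hall algebras}). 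Hence the two twists are not related by a coboundary, the two algebras are not isomorphic by any diagonal rescaling of the basis, and $\Phi_F$ cannot simply be renormalized as you anticipate.

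What the paper does instead is to embed the twisted derived Hall algebra $\cd\ch_{tw}(\ca)$ into $\cm\ch_{ctw}(\ca)$ (Theorem \ref{Theorem embedding}) and extend this to an isomorphism $\cd\ch^{ce}_{tw}(\ca)\cong\cm\ch_{ctw}(\ca)$ with a \emph{completely extended} derived Hall algebra carrying additional non-central generators $K_{\alpha}^{[n]}$ (Corollary \ref{proposition isomorphism of extended twisted derived and modified}); derived invariance is then proved at the level of $\cd\ch^{ce}_{tw}$. The part of your argument that survives is the invariance of the derived Hall algebra itself (your ``image of $D^b(\ca)$''), which is Remark \ref{remark derived invariance of derived hall algebras}. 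The genuinely new work, which your proposal leaves untouched, is to verify that the induced map $F_*$ preserves the relations between $K_{\alpha}^{[n]}$ and $Z_A^{[m]}$, whose twisting factors $\sqrt{(\alpha,\widehat{A})^{\pm 1}}$ depend on the positions $n$ and $m$ and not only on total classes in $K_0$. The paper handles this by decomposing every $A\in\ca$ as $\bigoplus_i A_i$ with $A_i\in\ca_i=\{A\mid F(A)\in\cb[i]\}$ and running a case-by-case check of relations (\ref{relation in ex-de knZn})--(\ref{relation in ex-de knZm}); some analogue of that computation is unavoidable and is absent from your proposal.
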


The paper is organized as follows. In Section \ref{section modified}, we recall the definition and the structure of the modified Ringel-Hall algebras. In Section \ref{section modified and naive}, we prove Theorem \ref{modified and naive}. Section \ref{section modified and lattice} is devoted to investigating the relation between the modified Ringel-Hall algebra and the lattice algebra, and Theorem \ref{modifed and lattice} is proved in this section. In Section \ref{the derived invariance}, we show Theorem \ref{main invariance in introduction} by the same method used in \cite{LinP}.

Unless otherwise specified, throughout this paper $k$ denotes a finite field with $q$ elements and put $v=\sqrt{q}$, $\ca$ is a hereditary abelian $k$-category satisfying the finiteness conditions (1)-(2).

\vspace{0.2cm} \noindent{\bf Acknowledgments.}
The author is deeply indebted to Professor Liangang Peng for his beneficial discussions. The author is also grateful to Changjian Fu and Ming Lu for their inspirations.

\section{Modified Ringel-Hall Algebras}\label{section modified}
\subsection{Ringel-Hall Algebras}
Let $\varepsilon$ be an essentially small exact category, linear over the finite field $k$. The set of the isomorphism classes of $\varepsilon$ is denoted by $\Iso(\varepsilon)$, and $\widehat{A}$ denotes the corresponding element in the Grothendieck group $K_0(\varepsilon)$ for an object $A\in\varepsilon$. Assume that $\varepsilon$ has finite morphism and extension spaces:
$$|\Hom_\varepsilon(A,B)|<\infty,\quad |\Ext^1_{\varepsilon}(A,B)|<\infty,\,\,\forall A,B\in\varepsilon.$$

Given objects $A,B,C\in\varepsilon$, define $\Ext_\varepsilon^1(A,C)_B\subseteq \Ext_\varepsilon^1(A,C)$ to be the subset parameterizing extensions whose middle term is isomorphic to $B$. The {\it Ringel-Hall algebra} $\ch(\varepsilon)$ is the $\Q$-vector space $\bigoplus_{[A]\in \Iso(\varepsilon)}\mathbb{Q}[A]$ with basis parametrized by the isomorphism classes of objects endowed with the multiplication
$$[A]\diamond [C]=\sum_{[B]\in \Iso(\varepsilon)}\frac{|\Ext^1_{\varepsilon}(A,C)_B|}{|\Hom_\varepsilon(A,C)|}[B].$$
It is well-known that
the algebra $\ch(\varepsilon)$ is associative and unital  with unit $[0]$, where $0$ is the zero object of $\varepsilon$, see \cite{R1} and also \cite{R2,P1,Hub,Br}.

In this section, we fix a hereditary abelian $k$-category $\ca$ satisfying the finiteness conditions (1)-(2)({\it cf.} Section~\ref{section introduction}). For any objects $A, B\in\mathcal{A}$, we define the Euler form
$$\langle A, B\rangle=\prod _{i\geq 0}|\Ext^i_{\ca}(A,B)|^{(-1)^i}.$$
It is easy to check that this form descends to a bilinear form on the Grothendieck group $K_0(\ca)$ of $\ca$ which we denote it by the same symbol:
$$\langle -, -\rangle: K_0(\ca)\times K_0(\ca)\rightarrow\mathbb{Q}^{\times}.$$
Denote by $(\widehat{A}, \widehat{B})=\langle \widehat{A}, \widehat{B}\rangle\langle \widehat{B}, \widehat{A}\rangle$ the symmetrized Euler form.

\begin{definition}[\cite{R2,Gr}]\label{definition of twisted Hall algebra}
$(1)$ The twisted Ringel-Hall algebra $\ch_{tw}(\ca)$ is the $\mathbb{Q}(v)$-vector space with the same basis as $\ch(\ca)$, and the twisted multiplication is defined by
$$[A]* [B]=\sqrt{\langle \widehat{A},\widehat{B}\rangle}[A]\diamond [B]$$
for any $[A],[B]\in\Iso(\ca)$.

$(2)$ The extended twisted Ringel-Hall algebra $\ch^e_{tw}(\ca)$ is defined as an extension of $\ch_{tw}(\ca)$ by adjoining symbols $k_\alpha$ for classes $\alpha\in K_0(\ca)$, and imposing relations
$$k_\alpha* k_\beta=k_{\alpha+\beta},\quad k_\alpha* [B]=\sqrt{( \alpha,\widehat{B})}  [B]* k_\alpha$$
for $\alpha,\beta\in K_0(\ca)$ and $[B]\in\Iso(\ca)$. Note that $\ch^e_{tw}(\ca)$ is an associative algebra over $\mathbb{Q}(v)$ with a basis consisting of the elements $[B]*k_\alpha$ for $[B]\in\Iso(\ca)$ and $\alpha\in K_0(\ca)$.
\end{definition}

\subsection{Modified Ringel-Hall algebras}
Let $\ch(\cc^b(\ca))$ be the Ringel-Hall algebra of $\cc^b(\ca)$, {\it i.e.} for any $L,M\in \cc^b(\ca)$, the Hall product is defined to be the following sum:
\begin{equation*}
[L]\diamond [M]=\sum_{[X]\in \Iso(\cc^b(\ca))}\frac{|\Ext^1_{\cc^b(\ca)}(L,M)_X|}{|\Hom_{\cc^b(\ca)}(L,M)|}[X].
\end{equation*}

Let $\ch(\cc^b(\ca))/I$ be the quotient algebra, where $I$ is the  ideal of $\ch(\cc^b(\ca))$ generated by all differences $[L]-[K\oplus M]$, if there is a short exact sequence $K\rightarrowtail L\twoheadrightarrow M$ in $\cc^b(\ca)$ with $K$ acyclic. We also denote by $\diamond$ the induced multiplication in $\ch(\cc^b(\ca))/I$.

We denote by $\cc^b_{ac}(\ca)$ the category of bounded acyclic complexes over $\ca$. And set $S$ to be the subset of $\ch(\cc^b(\ca))/I$ formed by all $r[K]$, where $r\in \mathbb{Q}^\times, K\in\cc_{ac}^b(\ca)$. The {\it modified Ringel-Hall algebra} $\cm\ch(\ca)$ is defined to be the right localization of $\ch(\cc^b(\ca))/I$ with respect to $S$, {\it i.e.}  $\cm\ch(\ca)=(\ch(\cc^b(\ca))/I)[S^{-1}]$ and we refer to ~\cite{LinP} for details. Here we also denote by $\diamond$ the multiplication in $(\ch(\cc^b(\ca)/I)[S^{-1}]$.

Given an object $A\in\ca$ and $m\in\mathbb{Z}$, let $K_{A,m}$ be the acyclic complex $$\cdots \rightarrow0 \rightarrow A \xrightarrow{1_A} A \rightarrow 0\rightarrow\cdots,$$ where $A$ sits in the degrees $m-1$ and $m$; $U_{A,m}$ be the stalk complex with $A$ concentrated in the degree $m$. By abuse of notations, we use the same symbols to denote their isomorphism classes in the modified Ringel-Hall algebra, {\it i.e.} $K_{A,m}:=[K_{A,m}]$ and $U_{A,m}:=[U_{A,m}]$. It is well-defined that $K_{\alpha, n}:=\frac{1}{\langle\alpha, \widehat{A_2}\rangle} K_{A_1, n}\diamond K^{-1}_{A_2, n}$, if $\alpha=\widehat{A_1}-\widehat{A_2}\in K_{0}(\ca)$ for two objects $A_1, A_2\in\ca$.
A $\mathbb{Q}$-basis of the modified Ringel-Hall algebra $\cm\ch(\ca)$ has been constructed in \cite{LinP}.
\begin{proposition}[\cite{LinP}]\label{proposition basis of modified}
$\cm\ch(\ca)$ has a basis consisting of elements
$$K_{\alpha_{r-1},r} \diamond K_{\alpha_{r-2},r-1}
\diamond\cdots\diamond K_{\alpha_{l+1},l+2} \diamond K_{\alpha_{l},l+1}
\diamond U_{A_r,r} \diamond U_{A_{r-1},r-1} \diamond\cdots\diamond U_{A_{l+1},l+1} \diamond U_{A_{l},l} ,$$
where $r\geq l$, $\alpha _i\in K_0(\ca)$ and $A_{j}\in\Iso(\ca)$ for any $l\leq i\leq r-1$ and $l\leq j\leq r$.
\end{proposition}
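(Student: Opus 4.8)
The plan is to produce a spanning set of the stated form and then verify its linear independence by a dimension/filtration count. First I would work in the quotient algebra $\ch(\cc^b(\ca))/I$ before localizing. The relation $[L]=[K\oplus M]$ for a short exact sequence $K\rightarrowtail L\twoheadrightarrow M$ with $K$ acyclic, applied to the canonical filtrations of a bounded complex, lets me reduce an arbitrary $[X]$ for $X\in\cc^b(\ca)$ to a $\diamond$-product of stalk complexes $U_{A,i}$ and acyclic complexes, up to lower-order terms. Concretely, for a complex $X$ with differentials $d_i$, one uses the short exact sequences relating $X$ to $\Im d_i$ and the homology $H_i(X)$: each $\Im d_i$ contributes an acyclic piece $K_{\cdot,i}$ (a complex isomorphic, via the splitting in $\mathcal{A}$ of $\Im d_i\hookrightarrow X_i\twoheadrightarrow \cdots$, to a direct sum of $K_{B,i}$'s) and each $H_i(X)$ contributes a stalk $U_{H_i(X),i}$. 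Because $\ca$ is hereditary, these reductions terminate and express $[X]$ in terms of the proposed generators. After localizing at $S$, the acyclic generators $K_{B,i}$ become invertible, and using the identity $K_{\alpha,n}=\frac{1}{\langle\alpha,\widehat{A_2}\rangle}K_{A_1,n}\diamond K_{A_2,n}^{-1}$ one rewrites any product of $K_{B,i}^{\pm 1}$'s in a fixed degree as a single $K_{\alpha,i}$. Finally one reorders the factors into the prescribed shape (all $K$'s to the left, arranged by decreasing degree, then all $U$'s by decreasing degree) using the commutation relations in $\cm\ch(\ca)$ recorded in \cite{LinP}; these relations only introduce scalars and strictly-lower-degree corrections, so induction on the support closes the argument that the listed elements span.

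For linear independence I would exploit the grading of $\cm\ch(\ca)$ by $K_0(\ca)^{\mathbb{Z}}$ coming from the dimension vectors of complexes in each degree, together with a secondary filtration. Fix a "total degree" vector $\mathbf{d}=(\dim_k\text{ of each component in each homological degree})$; the Hall multiplication is compatible with the induced grading by $\bigoplus_i K_0(\ca)$ (via $X_i$-components) in the sense that $[L]\diamond[M]$ lies in degree $\widehat{L}+\widehat{M}$ componentwise, and the localization respects this. Within a fixed graded piece, the number of basis elements of the stated form is finite, and I would match it against $\dim_{\mathbb Q}$ of that piece computed directly from $\ch(\cc^b(\ca))/I$: isomorphism classes of bounded complexes with prescribed component-dimension vectors are in bijection, after the reductions above, with tuples $(\alpha_i)$ and $(A_j)$ subject to the obvious compatibility, which is exactly the indexing set of the proposed basis. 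Thus spanning plus the dimension count forces linear independence.

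The main obstacle is the bookkeeping in the reduction step: showing that peeling off one image/homology layer at a time really does express $[X]$ as the claimed ordered product modulo terms that are genuinely "smaller" in a well-founded order, so that the induction is legitimate. This requires a careful choice of the order (lexicographic on the pair (number of nonzero components, then component dimensions) works because $\ca$ is hereditary, which bounds the length of the $\Im/H$ filtrations and guarantees the correction terms drop strictly), and it requires checking that the commutation relations from \cite{LinP} used to sort the factors do not disturb this order. Once the right partial order is pinned down, everything else is the routine Hall-algebra manipulation sketched above, and the result follows; I would cite \cite{LinP} for the precise commutation formulas and for the well-definedness of $K_{\alpha,n}$ rather than re-deriving them.
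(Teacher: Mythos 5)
This proposition is quoted from \cite{LinP} and the present paper gives no proof of it, so I am judging your proposal against the argument in that reference (which follows the Gorsky--Lu--Peng strategy for semi-derived/modified Hall algebras). Your spanning argument is essentially the standard one and is fine in outline, modulo one imprecision: the short exact sequences $\Im d^i\hookrightarrow X^i\twoheadrightarrow \cdots$ do not split in a hereditary category, so you cannot literally decompose $X$ as a direct sum of stalk and acyclic pieces. What actually happens is an identity in the quotient algebra: using a short exact sequence of complexes with acyclic kernel together with the Hall product one shows $[X]$ equals an explicit scalar times $[K_X]\diamond[\bigoplus_i U_{H^i(X),i}]$, and separately that the class of any bounded acyclic complex is, up to scalar, a $\diamond$-product of the $K_{A,m}$ (this second point is where hereditarity is genuinely used). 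That is repairable and close to what you intend.

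The genuine gap is linear independence. Your dimension count does not work for two reasons. First, the graded pieces of the $K_0(\ca)^{\oplus\mathbb Z}$-grading are not finite dimensional for a general finitary hereditary abelian category: the paper explicitly allows categories without enough projectives such as coherent sheaves on a curve or a weighted projective line, where infinitely many isomorphism classes share a fixed class in $K_0(\ca)$ (e.g.\ $\mathcal O(n)\oplus\mathcal O(-n)$ for all $n$), and ``$\dim_k$ of each component'' is not even defined there. Second, and more seriously, the count is circular: to compute $\dim_{\mathbb Q}$ of a graded piece of $\ch(\cc^b(\ca))/I$ you must already know which linear combinations of classes of complexes lie in $I$, and asserting that isomorphism classes of complexes are ``in bijection, after the reductions, with tuples $(\alpha_i),(A_j)$'' is precisely the statement to be proved, not an input. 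The proof in \cite{LinP} instead establishes linear independence structurally: one shows that $\ch(\cc^b(\ca))/I$ is a free module over the quantum torus of acyclic complexes with basis the classes of complexes with zero differential (via a filtration whose associated graded is identified with a tensor product of the quantum torus and copies of $\ch(\ca)$), checks the Ore condition so that the localization at $S$ is well behaved and the localization map is injective, and only then concludes that the displayed ordered monomials form a basis. Without an argument of this kind --- or some other device such as a faithful representation separating the proposed monomials --- your independence step does not close.
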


Let $A, B, M$ and $N$ be objects in $\ca$, we define a rational number $\gamma_{AB}^{MN}$. Let  $V(M, B, A, N)$ be the subset of $\Hom(M,B)\times\Hom(B,A)\times\Hom(A,N)$ consisting of exact sequences $0\rightarrow M \rightarrow B\rightarrow A\rightarrow N\rightarrow 0$. The set $V(M, B, A, N)$ is finite and let $\gamma_{AB}^{MN}:=\frac{|V(M, B, A, N)|}{a_Aa_B}$, here $a_X=|\aut_{\ca}(X)|$ denotes the cardinality of the automorphism group $\aut_{\ca}(X)$ for any object $X\in\ca$.

The modified Ringel-Hall algebra can be described by the generators and relations.

\begin{proposition}[\cite{LinP}]\label{proposition modified hall algebras}
The modified Ringel-Hall algebra $\cm\ch(\ca)$ is isomorphic to the associative and unital algebra generated by the set
$$\{U_{A,n},K_{\alpha,n}|A\in\Iso(\ca), \alpha\in K_0(\ca), n\in\mathbb{Z}\},$$  and the multiplication of which denoted by $\diamond$ subjects to the relations (\ref{relation in modified 1})-(\ref{relation in modified 10}) as follows.
\begin{eqnarray}
U_{A,n}\diamond U_{B,n}&=&\sum\limits_{C\in\Iso(\ca)}\frac{|\Ext^1_{\ca}(A,B)_C|}{|\Hom_{\ca}(A,B)|}U_{C,n},\label{relation in modified 1}\\
K_{\alpha,n}\diamond U_{A,n}&=&\langle\widehat{A},\alpha\rangle U_{A,n}\diamond K_{\alpha,n},\\
K_{\alpha, n}\diamond K_{\beta, n}&=&\frac{1}{\langle\alpha, \beta\rangle} K_{\alpha+\beta, n},
\end{eqnarray}
\begin{eqnarray}
U_{A,n}\diamond K_{\alpha,n+1}&=&\langle\alpha, \widehat{A}\rangle K_{\alpha,n+1}\diamond U_{A,n},\\
K_{\alpha,n}\diamond U_{A,n+1}&=&U_{A,n+1}\diamond K_{\alpha,n},\\
K_{\alpha,n}\diamond K_{\beta,n+1}&=&\langle\beta, \alpha\rangle K_{\beta,n+1}\diamond K_{\alpha,n},
\end{eqnarray}
\begin{eqnarray}
&U_{B,n}\diamond U_{A,n+1}=\sum\limits_{M,N\in\Iso(\ca)}\gamma_{AB}^{MN}\frac{a_Aa_B}{a_Ma_N}\langle\widehat{B}-\widehat{M}, \widehat{M}\rangle K_{\widehat{B}-\widehat{M},n+1}\diamond U_{N,n+1}\diamond U_{M,n},
\end{eqnarray}
and if $|m-n|\geq 2$, then
\begin{eqnarray}
U_{A,m}\diamond U_{B,n} &=& U_{B,n}\diamond U_{A,m},\\
K_{\alpha,m}\diamond U_{B,n} &=&U_{B,n}\diamond K_{\alpha,m},\\
K_{\alpha,m}\diamond K_{\beta,n}&=&K_{\beta,n}\diamond K_{\alpha,m}\label{relation in modified 10}.
\end{eqnarray}
\end{proposition}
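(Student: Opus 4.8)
The plan is the following. Let $\mcr$ be the associative unital $\mathbb{Q}$-algebra presented by generators $U_{A,n}$ ($A\in\Iso(\ca)$, $n\in\mathbb{Z}$) and $K_{\alpha,n}$ ($\alpha\in K_0(\ca)$, $n\in\mathbb{Z}$) subject to the relations (\ref{relation in modified 1})--(\ref{relation in modified 10}). Sending each generator of $\mcr$ to the element of $\cm\ch(\ca)$ carrying the same name defines an algebra homomorphism $\Phi\colon\mcr\to\cm\ch(\ca)$ as soon as one checks that every defining relation of $\mcr$ is an identity in $\cm\ch(\ca)$, and $\Phi$ is then surjective by Proposition~\ref{proposition basis of modified}, since the basis exhibited there consists of products of the $K_{\alpha,n}$ and $U_{A,n}$. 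So the statement reduces to (a) verifying the relations in $\cm\ch(\ca)$, and (b) proving that $\Phi$ is injective.

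For (a), the relations involving only the $K$'s, and those between $K$'s and $U$'s in equal degrees or in degrees at distance at least $2$, are routine computations in $\ch(\cc^b(\ca))/I$ localised at $S$: the $\Hom$- and $\Ext^1$-spaces in $\cc^b(\ca)$ between a stalk complex and an acyclic two-term complex $K_{A,m}$ are assembled from the $\Hom$- and $\Ext^1$-groups of $\ca$ occurring in the two relevant components, the relevant extensions and their middle terms are controlled accordingly, and the Euler-form scalars fall out; here one uses repeatedly that each $K_{A,m}$ is invertible after localising, which is exactly what makes $K_{\alpha,n}$ well defined for $\alpha\in K_0(\ca)$. The one substantial relation is the degree-adjacent one, $U_{B,n}\diamond U_{A,n+1}$. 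Because of the cohomological sign conventions, an extension $0\to U_{A,n+1}\to X\to U_{B,n}\to 0$ in $\cc^b(\ca)$ is exactly a two-term complex $X=(\cdots\to 0\to B\xrightarrow{d}A\to 0\to\cdots)$ with $B$, $A$ in degrees $n$, $n+1$ and $d$ an arbitrary morphism; writing $M=\Ker d$ and $N=\coker d$ (so that $\widehat{B}-\widehat{M}=\widehat{\Im d}$) and filtering $X$ with successive subquotients $U_{M,n}$, the contractible complex $K_{\Im d,n+1}$, and $U_{N,n+1}$, the formulas for multiplying by acyclic complexes in the modified Hall algebra show that $[X]$ is a scalar multiple of $K_{\widehat{B}-\widehat{M},n+1}\diamond U_{N,n+1}\diamond U_{M,n}$, the scalar being a product of Euler-form factors. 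Summing over all $d$ and grouping them by the pair $(\Ker d,\coker d)=(M,N)$ introduces the number of such $d$, which by the definition of $\gamma_{AB}^{MN}$ equals $\gamma_{AB}^{MN}\frac{a_Aa_B}{a_Ma_N}$; the accumulated Euler-form constants assemble into the remaining twist $\langle\widehat{B}-\widehat{M},\widehat{M}\rangle$, giving the claimed identity.

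For (b) the plan is the standard normal-form (straightening) argument. Using only the relations of $\mcr$, one shows that every element of $\mcr$ is a $\mathbb{Q}$-linear combination of ordered monomials of exactly the shape in Proposition~\ref{proposition basis of modified}: a string of $K$-factors in strictly decreasing degree followed by a string of $U$-factors in decreasing degree. Concretely, the relations of $\mcr$ let one move every $K$-factor to the left of every $U$-factor; the relations among the $K$'s (equal, adjacent, and distant degrees) bubble-sort the $K$-string into decreasing order and merge equal degrees; relation (\ref{relation in modified 1}) merges two $U$'s of equal degree; and the degree-adjacent relation rewrites an out-of-order adjacent pair $U_{B,n}\diamond U_{A,n+1}$ as a combination of terms $K_{\widehat{B}-\widehat{M},n+1}\diamond U_{N,n+1}\diamond U_{M,n}$, the $K$-factor of which is then shuttled back to the left. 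Since $\Phi$ is surjective and carries these normal-form monomials precisely onto the basis of Proposition~\ref{proposition basis of modified}, it must be an isomorphism.

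The main obstacle is making the straightening in (b) terminating and well-founded. A degree-adjacent swap reproduces $U$-factors in the same two degrees $n$ and $n+1$ (a priori with different $\ca$-objects) and creates new $K$-factors, so one must pin down a termination measure --- for instance the pair (number of $U$-factors, number of degree-inversions in the $U$-string), ordered lexicographically, using that $\gamma_{AB}^{MN}\neq 0$ for only finitely many $(M,N)$ --- under which every elementary move strictly decreases; one must also verify that the parameter redundancies in the basis of Proposition~\ref{proposition basis of modified} (trivial $K_{0,n}$- or $U_{0,n}$-factors, reindexing of the degree range) are already present among the normal-form monomials of $\mcr$, so that the spanning set one produces is in bijection with that basis. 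Both points are implicit in the construction of $\cm\ch(\ca)$ and its basis in \cite{LinP}, which one may therefore cite in place of carrying out the bookkeeping; but the route above is the self-contained one.
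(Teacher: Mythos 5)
The paper itself gives no proof of this proposition: it is quoted from \cite{LinP} with a citation only, so there is nothing internal to compare your argument against. Your two-step scheme --- check the relations in $\cm\ch(\ca)$ to obtain a surjection $\Phi$ from the presented algebra, then straighten arbitrary words into the normal form of Proposition~\ref{proposition basis of modified} and deduce injectivity from the linear independence of the image --- is the standard argument for such presentation results and is, as far as one can tell, exactly what \cite{LinP} does. Your treatment of the one substantive relation is correct: an extension of $U_{B,n}$ by $U_{A,n+1}$ is a two-term complex $B\xrightarrow{d}A$, the filtration with subquotients $U_{\Ker d,n}$, $K_{\Im d,n+1}$, $U_{\coker d,n+1}$ reduces $[X]$ to the stated triple product modulo the acyclic-splitting ideal, and the count of differentials with prescribed kernel and cokernel is $\gamma_{AB}^{MN}a_Aa_B/(a_Ma_N)$ by the definition of $\gamma_{AB}^{MN}$. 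The one point you flag in passing that genuinely must be made explicit is the normalization of trivial generators: the listed relations only force $K_{0,n}$ and $U_{0,n}$ to be idempotents commuting suitably with everything, not to equal $1$, so one must either adjoin $K_{0,n}=U_{0,n}=1$ to the presentation (the implicit convention) or otherwise identify the resulting repetitions among normal-form monomials before the spanning-set-onto-basis argument yields injectivity. With that caveat, and granting the termination of the straightening under your lexicographic measure (which works, since only finitely many pairs $(M,N)$ have $\gamma_{AB}^{MN}\neq 0$), the proposal is sound.
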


\section{Modified Ringel-Hall Algebras and Naive Lattice Algebras}\label{section modified and naive}
For each finitary hereditary abelian $k$-category $\mathcal{A}$, Kapranov~\cite{Kap} constructed the so-called naive lattice algebra $\mathcal{N}(\mathcal{A})$ for $\mathcal{A}$.
The main purpose of this section is to compare the naive lattice algebra $\mathcal{N}(\mathcal{A})$ with the modified Ringel-Hall algebra $\cm\ch(\ca)$. We first recall the definition of the naive lattice algebras.
\subsection{Naive lattice algebras}
Given two Hopf algebras $\Xi$ and $\Omega$, a Hopf pairing is a bilinear map $\phi: \Xi\times\Omega\rightarrow\mathbb{}\mathbb{Q}(v)$ satisfying the following conditions:
\begin{itemize}
\item[(1)] $\phi(\xi,1)=\epsilon_\Xi(\xi), \phi(1,\omega)=\epsilon_\Omega(\omega),$
\item[(2)] $\phi(\xi\xi', \omega)=\phi^{\otimes 2}(\xi\otimes\xi',\Delta(\omega)),$
\item[(3)] $\phi(\xi, \omega\omega')=\phi^{\otimes 2}(\Delta(\xi),\omega\otimes\omega'),$
\end{itemize}
where $\Delta$ and $\epsilon$ denote the comultiplication and counit respectively and $$\phi^{\otimes 2}:(\Xi\otimes\Xi)\times(\Omega\otimes\Omega)\rightarrow \mathbb{Q}(v)$$ is the pairing defined by $(\xi\otimes\xi', \omega\otimes\omega')\longmapsto \phi(\xi,\omega)\phi(\xi',\omega')$. We do not include here any condition on the antipodes since we will not need them.

\begin{definition}[\cite{Kap}]\label{definition of naive lattice algebra}
Let $\Xi_{m}, m\in\mathbb{Z}$ be Hopf algebras and $\phi_m: \Xi_{m+1}\times\Xi_{m}\rightarrow \mathbb{Q}(v)$ be Hopf pairings. The naive lattice algebra $\cn=\cn(\{\Xi_{m}, \phi_m\})$ is generated by elements of all the algebras $\Xi_{m}$, so that inside each $\Xi_{m}$ the elements are multiplied according to the multiplication law there while for elements of different algebras we impose the relations
$$\xi_{m}\xi_{m+1}=(Id\otimes\phi_{m}\otimes Id)\left(\Delta_{\Xi_{m+1}}(\xi_{m+1})\otimes\Delta_{\Xi_{m}}(\xi_{m})\right)$$
and $$\xi_{m}\xi_{m'}=\xi_{m'}\xi_{m},\quad |m-m'|\geq 2.$$
\end{definition}

For any sequence $(a^i)_{i\in\mathbb{Z}}$ of elements of a possibly non-commutative algebra $S$, almost all equal to $1$, we define their ordered product to be $$\prod^{\overleftarrow{}}_ia^i=a^pa^{p-1}\cdots a^{q+1}a^q,$$where $p,q$ are integers such that $a^i=1$ unless $q\leq i\leq p$.

\begin{lemma}[\cite{Kap}]\label{lemma of iso between tensor algebras and naive algebras by Kap}
The ordered product map $\otimes_{m\in\mathbb{Z}}\Xi_{m}\rightarrow \cn$ is an isomorphism of vector spaces.
\end{lemma}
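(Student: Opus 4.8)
The plan is to exhibit an explicit linear isomorphism and check it respects multiplication. Since each $\Xi_m$ is a Hopf algebra, it embeds into $\cn$ as a subalgebra by construction, so the ordered product map $\Phi\colon \bigotimes_{m\in\Z}\Xi_m\to\cn$, sending an elementary tensor $\bigotimes_m \xi_m$ (almost all factors equal to $1$) to the ordered product $\prod^{\overleftarrow{}}_m \xi_m = \xi_p\,\xi_{p-1}\cdots\xi_q$, is a well-defined linear map. To prove it is an isomorphism I would construct a two-sided inverse on the level of vector spaces, or equivalently show that $\Phi$ is surjective and that a basis of the domain maps to a linearly independent set.

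First I would establish surjectivity. The image of $\Phi$ is spanned by ordered products $\xi_p\cdots\xi_q$ with each $\xi_i\in\Xi_i$; it suffices to show this span is closed under multiplication by an arbitrary generator $\eta_m\in\Xi_m$ on the left, since the generators of $\cn$ are exactly the elements of the $\Xi_m$. Multiplying $\eta_m\cdot(\xi_p\cdots\xi_q)$ on the left, one uses the defining relations of Definition \ref{definition of naive lattice algebra}: the relation $\xi_m\xi_{m+1}=(\mathrm{Id}\otimes\phi_m\otimes\mathrm{Id})(\Delta(\xi_{m+1})\otimes\Delta(\xi_m))$ lets us commute $\eta_m$ past $\xi_{m+1}$ at the cost of replacing things by sums of terms whose $\Xi_m$- and $\Xi_{m+1}$-components are again obtained by multiplying within those algebras, and the relation $\xi_m\xi_{m'}=\xi_{m'}\xi_m$ for $|m-m'|\ge 2$ lets us move $\eta_m$ freely past all factors of index $\ge m+2$. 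Iterating, $\eta_m$ is transported down to its proper slot, and within that slot one multiplies in $\Xi_m$; the net effect is to rewrite $\eta_m\cdot(\xi_p\cdots\xi_q)$ as a linear combination of ordered products, so the image is a subalgebra containing all generators, hence all of $\cn$.

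For injectivity I would argue that $\cn$ admits a filtration or grading by $K_0$-type data (coming from the grading of each Hopf algebra $\Xi_m$, which in the cases of interest are the extended twisted Hall algebras graded by $\Iso(\ca)\times K_0(\ca)$) under which the defining relations are homogeneous, and that the ordered monomials are linearly independent because one can recover, from any element written in the spanning set, the leading term in a suitable lexicographic order on the indices $m$ — the comultiplication terms $\Delta(\xi)-\xi\otimes 1-1\otimes\xi$ are ``lower order'' in the relevant sense, so no cancellation among distinct ordered monomials can occur. Concretely this is the standard PBW-type argument: order the spanning monomials by (support width, then content), show the rewriting rules from the relations are confluent and strictly decreasing, and conclude that the ordered monomials form a basis.

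The main obstacle is the injectivity/PBW step: one must verify that the rewriting system coming from the two families of relations in Definition \ref{definition of naive lattice algebra} is confluent, i.e. that the two ways of normal-ordering a product $\eta_\ell\,\eta_m\,\eta_n$ with $\ell>m>n$ agree, which ultimately reduces to the coassociativity of the $\Delta_{\Xi_m}$ together with the Hopf-pairing axioms (2) and (3) relating $\phi_m$ to the comultiplications. This is essentially a cocycle-type compatibility and is where the Hopf-pairing hypotheses are genuinely used; everything else is bookkeeping. Since the statement is quoted from Kapranov \cite{Kap}, in the write-up I would either cite that reference for the confluence check or reproduce the short verification that the triple-overlap ambiguities resolve, and then invoke the diamond lemma to conclude that $\Phi$ is a linear isomorphism.
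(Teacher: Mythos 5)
The paper offers no proof of this lemma at all --- it is quoted verbatim from Kapranov \cite{Kap} --- so there is no in-paper argument to measure your proposal against; I will therefore assess your sketch on its own terms and against the standard argument. Your surjectivity step is correct and is the routine half. The genuine gap is in the injectivity step, and it is twofold. First, you never actually perform the decisive verification: you reduce everything to confluence of the rewriting system (resolution of the overlap ambiguities in $\eta_\ell\,\eta_m\,\eta_n$), correctly identify that this is where coassociativity and the Hopf-pairing axioms must enter, and then propose to ``cite that reference or reproduce the short verification.'' Deferring the only non-formal step to a citation of the very statement being proved leaves the proof incomplete. Second, the mechanism you invoke for linear independence --- that $\Delta(\xi)-\xi\otimes 1-1\otimes\xi$ is ``lower order'' for a grading by $K_0$-type data --- is borrowed from the Hall-algebra case, where $\Xi_m=\ch^e_{tw}(\ca)$ is filtered by the positive cone of $K_0(\ca)$ and $\Delta$ is lower triangular for that filtration. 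But Definition \ref{definition of naive lattice algebra} and the lemma are stated for arbitrary Hopf algebras $\Xi_m$ with Hopf pairings $\phi_m$, for which no such filtration need exist; and even in the Hall-algebra case the quantity that actually decreases under the rewriting $\xi_m\xi_{m+1}\mapsto\sum\phi_m\bigl(\xi_{m+1}^{(2)},\xi_m^{(1)}\bigr)\,\xi_{m+1}^{(1)}\xi_m^{(2)}$ is the number of inversions of the word, not your ``(support width, then content).''

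The cleaner route --- and, in essence, Kapranov's --- avoids both issues: one defines a multiplication directly on the vector space $\bigotimes_{m}\Xi_m$ (the iterated Heisenberg double), by the formula forced by the cross relations, and verifies associativity using only coassociativity of the $\Delta_{\Xi_m}$ and conditions (2)--(3) in the definition of a Hopf pairing. This single computation shows simultaneously that the relations of Definition \ref{definition of naive lattice algebra} present an algebra whose underlying vector space is $\bigotimes_m\Xi_m$ and that the ordered product map is the identity on that space, hence bijective; no termination, confluence, or filtration argument is needed, and the proof works at the stated level of generality. If you prefer to keep the diamond-lemma formulation, you must actually write out the resolution of the ambiguities --- it is the same computation in different clothing --- and you should drop the appeal to a $K_0$-grading, which is not available for general $\Xi_m$.
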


By the work of Green \cite{Gr} and Xiao \cite{X}, the comultiplication $\Delta$ and counit $\epsilon$ of twisted extended Ringel-Hall algebra $\ch^e_{tw}(\ca)$ are given by
$$\Delta:\ch^e_{tw}(\ca)\rightarrow \ch^e_{tw}(\ca)\widehat{\otimes} \ch^e_{tw}(\ca),\epsilon: \ch^e_{tw}(\ca)\rightarrow \mathbb{Q}(v),$$
\begin{eqnarray*}
\Delta([A]*k_\alpha)&=&\sum_{[B],[C]\in \Iso(\ca)}\sqrt{\langle \widehat{B},\widehat{C}\rangle}\frac{|\Ext^1_\ca(B,C)_A|}{|\Hom_{\ca}(B,C)|}\frac{a_A}{a_Ba_C}([B]*k_{\widehat{C}+\alpha})\otimes[C]*k_\alpha,\label{equation coproduct 1}\\
\epsilon([A]k_\alpha)&=&\delta_{[A],0}
\end{eqnarray*}
for any $[A]\in\Iso(\ca)$ and $\alpha\in K_0(\ca)$. Then $(\ch_{tw}^e(\ca),*,[0],\Delta,\epsilon)$ is a \emph{topological bialgebra} defined over $\mathbb{Q}(v)$.
Here topological means that everything should be considered in the completed space.

It is well-known that there exists a non-degenerate symmetric bilinear pairing $$\phi: \ch^e_{tw}(\ca)\widehat{\otimes} \ch^e_{tw}(\ca)\rightarrow\mathbb{Q}(v)$$
defined by $$\phi([M]*K_\alpha, [N]*K_\beta)=\sqrt{(\alpha,\beta)}\delta_{[M],[N]}a_M,$$ which is a Hopf pairing. Xiao \cite{X} showed that the extended twisted Ringel-Hall algebra has an antipode and it is a Hopf algebra.
The {\it naive lattice algebra } $\mathcal{N}(\mathcal{A})$ for the hereditary category $\mathcal{A}$ is the naive lattice algebra for $\Xi_{m}=\ch^e_{tw}(\ca)$ and $\phi_m=\phi$ for $m\in \mathbb{Z}$.

According to the construction, the naive lattice algebra $\cn(\ca)$ can also be described by the generators and relations. By Lemma \ref{lemma of iso between tensor algebras and naive algebras by Kap} and the proof of Proposition 1.5.3 of \cite{Kap}, one can easily get the following proposition.
\begin{proposition}\label{proposition of naive lattice algebra of hall algebra}
The naive lattice algebra $\cn(\ca)$ is generated by the symbols $Y_A^{(n)}$ and $ K_\alpha^{(n)}$ for all $A\in\Iso(\ca), n\in\mathbb{Z}$ and $\alpha\in K_{0}(\ca)$. And the following relations (\ref{relation in naive 1})-(\ref{relation in naive 10}) are the defining relations of $\cn(\ca)$.

\begin{eqnarray}
Y_A^{(n)}Y_B^{(n)}&=&\sum\limits_{C\in \Iso(\ca)}\sqrt{\langle \widehat{A},\widehat{B}\rangle}\frac{|\Ext^1_{\ca}(A,B)_C|}{|\Hom_{\ca}(A,B)|}Y_C^{(n)},\label{relation in naive 1}\\
K_\alpha^{(n)}Y_A^{(n)}&=&\sqrt{(\alpha, \widehat{A})}Y_A^{(n)}K_\alpha^{(n)},\\
K_\alpha^{(n)}K_\beta^{(n)}&=&K_{\alpha+\beta}^{(n)},
\end{eqnarray}
\begin{eqnarray}
Y_A^{(n)}K_\alpha^{(n+1)}&=&\sqrt{(\alpha, \widehat{A})}K_\alpha^{(n+1)}Y_A^{(n)},\label{relation of adjacent 1}\\
K_\alpha^{(n)}Y_A^{(n+1)}&=&Y_A^{(n+1)}K_\alpha^{(n)},\\
K_\alpha^{(n)}K_\beta^{(n+1)}&=&\sqrt{(\alpha, \beta)}K_\beta^{(n+1)}K_\alpha^{(n)},
\end{eqnarray}
\begin{eqnarray}
&Y_B^{(n)}Y_A^{(n+1)}=\sum\limits_{M,N\in\Iso(\mathcal{A})}\gamma_{AB}^{MN}\frac{a_Aa_B}{a_Ma_N}\sqrt{\langle\widehat{M}-\widehat{N}, \widehat{M}-\widehat{B}\rangle}Y_N^{(n+1)}Y_M^{(n)}K_{\widehat{B}-\widehat{M}}^{(n+1)}\label{relation of adjacent 4},
\end{eqnarray}
if $|m-n|\geq 2$, then
\begin{eqnarray}
Y_A^{(m)}Y_B^{(n)}&=&Y_B^{(n)}Y_A^{(m)},\\
K_{\alpha}^{(m)}Y_B^{(n)}&=&Y_B^{(n)}K_{\alpha}^{(m)},\\
K_{\alpha}^{(m)}K_{\beta}^{(n)}&=&K_{\beta}^{(n)}K_{\alpha}^{(m)}.\label{relation in naive 10}
\end{eqnarray}
\end{proposition}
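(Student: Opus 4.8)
The plan is to deduce the presentation directly from Lemma \ref{lemma of iso between tensor algebras and naive algebras by Kap}, combined with the general ``generators and relations'' description of a naive lattice algebra contained in the proof of \cite[Proposition~1.5.3]{Kap}, by substituting the explicit coproduct and Hopf pairing of $\ch^e_{tw}(\ca)$ recorded above. Since $\ch^e_{tw}(\ca)$ is spanned as an algebra by the $[A]$, $A\in\Iso(\ca)$, and the $k_\alpha$, $\alpha\in K_0(\ca)$, the algebra $\cn(\ca)$ is generated by the images of these elements in the successive tensor factors $\Xi_n=\ch^e_{tw}(\ca)$; one sets $Y_A^{(n)}$ and $K_\alpha^{(n)}$ to be these images. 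The relations (\ref{relation in naive 1})--(\ref{relation in naive 10}) then fall into three groups, each treated in turn.

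The relations internal to a single copy $n$ --- that is, (\ref{relation in naive 1}) and the two that follow it --- are nothing more than the defining relations of $\ch^e_{tw}(\ca)$ from Definition \ref{definition of twisted Hall algebra}: the twisted Hall product $[A]*[B]=\sqrt{\langle\widehat{A},\widehat{B}\rangle}\,[A]\diamond[B]$ gives (\ref{relation in naive 1}), the relation $k_\alpha*[A]=\sqrt{(\alpha,\widehat{A})}\,[A]*k_\alpha$ gives the second, and $k_\alpha*k_\beta=k_{\alpha+\beta}$ gives the third. At the opposite extreme, the three relations for $|m-n|\ge 2$, ending with (\ref{relation in naive 10}), are immediate from the second clause of Definition \ref{definition of naive lattice algebra}.

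The content lies in the adjacent relations (\ref{relation of adjacent 1})--(\ref{relation of adjacent 4}), which come from the first clause of Definition \ref{definition of naive lattice algebra}, namely $\xi_{n}\xi_{n+1}=(\id\otimes\phi\otimes\id)\bigl(\Delta(\xi_{n+1})\otimes\Delta(\xi_{n})\bigr)$, upon inserting the Green--Xiao coproduct formula for $\Delta$ and the pairing $\phi([M]*K_\alpha,[N]*K_\beta)=\sqrt{(\alpha,\beta)}\,\delta_{[M],[N]}\,a_M$. When one of $\xi_n,\xi_{n+1}$ is a $K$-generator, its coproduct is group-like, so the pairing forces the Hall leg of the remaining tensor factor to be the zero object; the double sum then collapses to a single term, and a short computation with Euler forms yields (\ref{relation of adjacent 1}) and the two relations following it. The case $\xi_n=Y_B^{(n)}$, $\xi_{n+1}=Y_A^{(n+1)}$ is the crucial one: both coproducts are the full Hall coproduct, and the pairing identifies the sub-object leg of $\Delta([A])$ with the quotient leg of $\Delta([B])$. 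Writing $L$ for this common isomorphism class, $M$ for the remaining sub-object of $B$ and $N$ for the remaining quotient of $A$, one recognizes $L$ as the image of a four-term exact sequence $0\to M\to B\to A\to N\to 0$, so that $\widehat{L}=\widehat{B}-\widehat{M}=\widehat{A}-\widehat{N}$. Moving the surviving $K$-generator $K_{\widehat{B}-\widehat{M}}^{(n+1)}$ to the right past $Y_M^{(n)}$ by means of (\ref{relation of adjacent 1}), and replacing the sum over $L$ of products of Hall numbers by $\gamma_{AB}^{MN}\frac{a_Aa_B}{a_Ma_N}$ via the same combinatorial identity used in the proof of \cite[Proposition~1.5.3]{Kap} (and in Proposition \ref{proposition modified hall algebras}), produces exactly (\ref{relation of adjacent 4}); in particular the square-root twists $\sqrt{\langle\widehat{N},\widehat{L}\rangle}$ and $\sqrt{\langle\widehat{L},\widehat{M}\rangle}$ coming from the two coproducts combine with the factor $1/\sqrt{(\widehat{B}-\widehat{M},\widehat{M})}$ into $\sqrt{\langle\widehat{M}-\widehat{N},\widehat{M}-\widehat{B}\rangle}$.

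Finally, to see that (\ref{relation in naive 1})--(\ref{relation in naive 10}) form a \emph{complete} set of defining relations and not merely a set of valid ones, one notes that the straightening relations (\ref{relation of adjacent 1})--(\ref{relation of adjacent 4}) and the commutations for $|m-n|\ge 2$ allow any monomial in the generators to be rewritten as an ordered product over the copies (copies in decreasing order), inside each of which (\ref{relation in naive 1}) and its two companions reduce that copy to the standard spanning set $\{[B]*k_\alpha\}$ of $\ch^e_{tw}(\ca)$. Hence the abstract algebra presented by (\ref{relation in naive 1})--(\ref{relation in naive 10}) surjects onto $\cn(\ca)$ and is spanned by a set in bijection with the basis of $\bigotimes_{n\in\mathbb{Z}}\ch^e_{tw}(\ca)$ supplied by Lemma \ref{lemma of iso between tensor algebras and naive algebras by Kap}, whence the surjection must be an isomorphism. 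I expect the main obstacle to be the bookkeeping in the final adjacent relation: tracking which tensor legs land in which copy, which shift of a $K$-generator is produced and where it ends up, and verifying that all the accumulated square-root factors collapse to the single factor $\sqrt{\langle\widehat{M}-\widehat{N},\widehat{M}-\widehat{B}\rangle}$ appearing in (\ref{relation of adjacent 4}).
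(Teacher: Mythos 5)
Your proposal is correct and follows exactly the route the paper intends: the paper gives no proof beyond citing Lemma \ref{lemma of iso between tensor algebras and naive algebras by Kap} and the proof of Kapranov's Proposition 1.5.3, and your argument (internal relations from $\ch^e_{tw}(\ca)$, adjacent relations from the cross-relation with the Green--Xiao coproduct and the Hopf pairing, completeness via straightening against the tensor-product basis) is precisely the detailed working-out of that citation. The bookkeeping you flag in relation (\ref{relation of adjacent 4}) does check out: with $\widehat{L}=\widehat{B}-\widehat{M}=\widehat{A}-\widehat{N}$ the twists combine to $\sqrt{\langle\widehat{N}-\widehat{M},\widehat{B}-\widehat{M}\rangle}=\sqrt{\langle\widehat{M}-\widehat{N},\widehat{M}-\widehat{B}\rangle}$ as required.
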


\subsection{Componentwise Twisted Modified Ringel-Hall Algebras}

In the following, we define \emph{the componentwise Euler form} on $\Iso(\cc^b(\ca))$
$$\langle-,-\rangle_{cw}: \Iso(\cc^b(\ca))\times\Iso(\cc^b(\ca))\rightarrow \mathbb{Q}^\times,$$
by setting $\langle [M],[N]\rangle_{cw}=\sqrt{\prod\limits_{i\in\mathbb{Z}}\langle \widehat{M^i},\widehat{N^i}\rangle}$ for $[M], [N]\in \Iso(\cc^b(\ca))$.
This form descends to a bilinear form
$$\langle-,-\rangle_{cw}:K_0(\cc^b(\ca))\times K_0(\cc^b(\ca))\rightarrow\Q^\times.$$

The multiplication in the \emph{componentwise twisted modified Ringel-Hall algebra} $\cm\ch_{ctw}(\ca)$ is defined by
$$[M_1]\star[M_2]:=\langle [M_1],[M_2]\rangle_{cw}[M_1]\diamond[M_2],\quad \text{for any}~[M_1],[M_2]\in\Iso(\cc^{b}(\ca)).$$

So it is not hard to obtain that the \emph{componentwise twisted modified Ringel-Hall algebra} $\cm\ch_{ctw}(\ca)$ is also generated by
the set
$$\{U_{A,n},K_{\alpha,n}|A\in\Iso(\ca),\alpha\in K_0(\ca), n\in\mathbb{Z}\},$$
but subject to the relations (\ref{relation of same lattice in twisted modified 1})-(\ref{relation of not-adjacent in twisted modified 10}) in the following proposition.

\begin{proposition}\label{proposition twisted multiplication in modified hall algebras}
For any $A, B\in\Iso(\ca)$, $\alpha, \beta\in K_{0}(\ca)$, and $m, n\in\mathbb{Z}$, we have
\begin{eqnarray}
U_{A,n}\star U_{B,n}&=&\sum\limits_{C\in\Iso(\ca)}\sqrt{\langle \widehat{A},\widehat{B}\rangle}\frac{|\Ext^1_{\ca}(A,B)_C|}{|\Hom_{\ca}(A,B)|}U_{C,n},\label{relation of same lattice in twisted modified 1}\\
K_{\alpha,n}\star U_{A,n}&=&\sqrt{(\alpha, \widehat{A})}U_{A,n}\star K_{\alpha,n},\\
K_{\alpha, n}\star K_{\beta, n}&=&K_{\alpha+\beta, n},\label{relation of same lattice in twisted modified 3}
\end{eqnarray}
\begin{eqnarray}
U_{A,n}\star K_{\alpha,n+1}&=&\sqrt{(\alpha, \widehat{A})}K_{\alpha,n+1}\star U_{A,n},\label{relation in twisted modified 4 }\\
K_{\alpha,n}\star U_{A,n+1}&=&U_{A,n+1}\star K_{\alpha,n},\\
K_{\alpha,n}\star K_{\beta,n+1}&=&\sqrt{(\alpha, \beta)}K_{\beta,n+1}\star K_{\alpha,n},\label{relation in twisted modified 6}
\end{eqnarray}
\begin{eqnarray}
&U_{B,n}\star U_{A,n+1}=\sum\limits_{M,N\in\Iso(\ca)}\gamma_{AB}^{MN}\frac{a_Aa_B}{a_Ma_N}\sqrt{\langle\widehat{M}-\widehat{N}, \widehat{M}-\widehat{B}\rangle}U_{N,n+1}\star U_{M,n}\star K_{\widehat{B}-\widehat{M},n+1},\label{relation of not-adjacent in twisted modified 7}
\end{eqnarray}
if $|m-n|\geq 2$, then
\begin{eqnarray}
U_{A,m}\star U_{B,n} &=& U_{B,n}\star U_{A,m},\label{relation of not-adjacent in twisted modified 8}\\
K_{\alpha,m}\star U_{B,n} &=&U_{B,n}\star K_{\alpha,m},\\
K_{\alpha,m}\star K_{\beta,n}&=&K_{\beta,n}\star K_{\alpha,m}\label{relation of not-adjacent in twisted modified 10}
\end{eqnarray}
in $\cm\ch_{ctw}(\ca)$ , where $K_{\alpha, n}=K_{A_1, n}\star K^{-1}_{A_2, n}$, if $\alpha=\widehat{A_1}-\widehat{A_2}\in K_{0}(\ca)$.
\end{proposition}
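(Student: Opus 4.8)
The plan is to derive the relations (\ref{relation of same lattice in twisted modified 1})--(\ref{relation of not-adjacent in twisted modified 10}) directly from the corresponding untwisted relations (\ref{relation in modified 1})--(\ref{relation in modified 10}) of Proposition \ref{proposition modified hall algebras}, simply by bookkeeping the componentwise twisting factors. The key observation is that for stalk complexes and acyclic complexes the componentwise Euler form is easy to compute: if $U_{A,n}$ is the stalk complex with $A$ in degree $n$, then the only degree where both $U_{A,n}$ and $U_{B,n}$ are nonzero is $n$, so $\langle U_{A,n},U_{B,n}\rangle_{cw}=\sqrt{\langle\widehat A,\widehat B\rangle}$, whereas $\langle U_{A,m},U_{B,n}\rangle_{cw}=1$ when $m\ne n$. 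For the two-term complex $K_{A,n}$, concentrated in degrees $n-1$ and $n$, one gets $\langle K_{\alpha,n},U_{B,n}\rangle_{cw}=\sqrt{\langle\alpha,\widehat B\rangle}$ and $\langle U_{A,n},K_{\alpha,n+1}\rangle_{cw}=\sqrt{\langle\widehat A,\alpha\rangle}$ (the relevant overlap is in degree $n$), $\langle K_{\alpha,n},K_{\beta,n}\rangle_{cw}=\sqrt{\langle\alpha,\beta\rangle^{2}}=\langle\alpha,\beta\rangle$ wait --- more carefully, $\langle K_{\alpha,n},K_{\beta,n}\rangle_{cw}=\sqrt{\langle\alpha,\beta\rangle\langle\alpha,\beta\rangle}=\langle\alpha,\beta\rangle$, and $\langle K_{\alpha,n},K_{\beta,n+1}\rangle_{cw}=\sqrt{\langle\alpha,\beta\rangle}$; in all cases where $|m-n|\ge 2$ the supports are disjoint so the twisting factor is $1$. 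I would first record these formulas as a short computation (or sublemma), taking care with the $K_{\alpha,n}$ when $\alpha=\widehat{A_1}-\widehat{A_2}$ is not effective, using bilinearity of $\langle-,-\rangle_{cw}$ on $K_0(\cc^b(\ca))$.

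Next, for each of the ten relations I would start from the untwisted identity $[M_1]\diamond[M_2]=\sum_i c_i[N_i]$ in $\cm\ch(\ca)$ and multiply both sides by the appropriate power of the twisting factor, using $[M_1]\star[M_2]=\langle[M_1],[M_2]\rangle_{cw}[M_1]\diamond[M_2]$ and, on the right-hand side, rewriting each $\diamond$-product of generators back into $\star$-products. For instance, for (\ref{relation of same lattice in twisted modified 1}) one multiplies (\ref{relation in modified 1}) by $\sqrt{\langle\widehat A,\widehat B\rangle}$ and notes that on the right each $U_{C,n}$ is already a single basis element, so no further adjustment is needed; this gives exactly (\ref{relation of same lattice in twisted modified 1}). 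The commutation relations (\ref{relation of same lattice in twisted modified 3}), (\ref{relation in twisted modified 4 })--(\ref{relation in twisted modified 6}), and the disjoint-support relations (\ref{relation of not-adjacent in twisted modified 8})--(\ref{relation of not-adjacent in twisted modified 10}) are immediate once the twisting factors are inserted on both sides and seen to cancel appropriately; the passage from $\langle-,-\rangle$ to the symmetrized $(-,-)$ in (\ref{relation of same lattice in twisted modified 1})'s companion relations comes precisely from the symmetric pattern $\langle\alpha,\widehat A\rangle$ on the left combining with $\langle\widehat A,\alpha\rangle$ produced by reordering on the right.

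The only genuinely delicate relation is (\ref{relation of not-adjacent in twisted modified 7}), the analogue of relation (\ref{relation in modified 1}) for $U_{B,n}\diamond U_{A,n+1}$. Here one starts from
\[
U_{B,n}\diamond U_{A,n+1}=\sum_{M,N}\gamma_{AB}^{MN}\frac{a_Aa_B}{a_Ma_N}\langle\widehat B-\widehat M,\widehat M\rangle\,K_{\widehat B-\widehat M,n+1}\diamond U_{N,n+1}\diamond U_{M,n},
\]
multiplies the left side by $\langle U_{B,n},U_{A,n+1}\rangle_{cw}=1$ (disjoint supports!), and then must convert the three-fold $\diamond$-product on the right into the $\star$-product $K_{\widehat B-\widehat M,n+1}\star U_{N,n+1}\star U_{M,n}$. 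This introduces the correction factor $\langle K_{\widehat B-\widehat M,n+1},U_{N,n+1}\rangle_{cw}^{-1}\langle K_{\widehat B-\widehat M,n+1}\oplus U_{N,n+1},\,U_{M,n}\rangle_{cw}^{-1}$ --- more precisely one uses the formulas above to see this equals $\langle\widehat B-\widehat M,\widehat N\rangle^{-1/2}$ times a factor of $1$ from the degree-$n$ versus degree-$(n+1)$ disjointness of $U_{M,n}$ from the other two. Combining this with the untwisted coefficient $\langle\widehat B-\widehat M,\widehat M\rangle$ and simplifying, using $\widehat M-\widehat N=\widehat B-\widehat A$ on the support of $\gamma_{AB}^{MN}$ (which forces $\widehat A+\widehat N=\widehat B+\widehat M$, wait, rather $\widehat B+\widehat A=\widehat M+\widehat N$ from the four-term exact sequence $0\to M\to B\to A\to N\to 0$), should collapse the exponent to the claimed $\sqrt{\langle\widehat M-\widehat N,\widehat M-\widehat B\rangle}$. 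I expect the main obstacle to be exactly this arithmetic: tracking which Euler-form arguments are on which side, keeping the half-powers consistent, and invoking the constraint $\widehat B+\widehat A=\widehat M+\widehat N$ at the right moment to turn $\langle\widehat B-\widehat M,\widehat M\rangle\langle\widehat B-\widehat M,\widehat N\rangle^{-1/2}$ into $\langle\widehat M-\widehat N,\widehat M-\widehat B\rangle^{1/2}$; everything else is routine substitution. I would close by remarking that since (\ref{relation in modified 1})--(\ref{relation in modified 10}) are defining relations for $\cm\ch(\ca)$ and the twisting $[M_1]\mapsto$ itself with the new product is an algebra isomorphism onto its image (being invertible on basis elements), the relations (\ref{relation of same lattice in twisted modified 1})--(\ref{relation of not-adjacent in twisted modified 10}) are likewise a complete set of defining relations for $\cm\ch_{ctw}(\ca)$.
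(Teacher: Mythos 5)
Your strategy is exactly the paper's: the proof there likewise disposes of all relations except (\ref{relation of not-adjacent in twisted modified 7}) by observing that the componentwise Euler form restricts to the usual twist on each degree and contributes nothing across disjoint degrees, and then verifies (\ref{relation of not-adjacent in twisted modified 7}) by converting the $\diamond$-expression of $U_{B,n}\diamond U_{A,n+1}$ into $\star$-products and simplifying the resulting Euler-form factors. Your table of values of $\langle-,-\rangle_{cw}$ on generators is correct and is the right sublemma to isolate.

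However, your sketch of the one delicate computation contains a concrete slip that, taken literally, gives the wrong coefficient. You assert that the conversion factor involving $U_{M,n}$ is ``$1$ from the degree-$n$ versus degree-$(n+1)$ disjointness of $U_{M,n}$ from the other two,'' but $K_{\widehat{B}-\widehat{M},n+1}$ has components in degrees $n$ \emph{and} $n+1$, so it does overlap $U_{M,n}$ in degree $n$ and contributes $\sqrt{\langle\widehat{B}-\widehat{M},\widehat{M}\rangle}^{-1}$, not $1$. Moreover, after converting to $K_{\widehat{B}-\widehat{M},n+1}\star U_{N,n+1}\star U_{M,n}$ you still have to commute $K_{\widehat{B}-\widehat{M},n+1}$ to the right end using (\ref{relation of same lattice in twisted modified 1})'s companions, which contributes the further factor $\sqrt{(\widehat{B}-\widehat{M},\widehat{N})}\,(\widehat{B}-\widehat{M},\widehat{M})^{-1/2}$; your intermediate expression $\langle\widehat{B}-\widehat{M},\widehat{M}\rangle\langle\widehat{B}-\widehat{M},\widehat{N}\rangle^{-1/2}$ omits both of these and is not equal to $\sqrt{\langle\widehat{M}-\widehat{N},\widehat{M}-\widehat{B}\rangle}$. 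Once all factors are included, everything collapses to $\sqrt{\langle\widehat{N}-\widehat{M},\widehat{B}-\widehat{M}\rangle}$, which equals the claimed $\sqrt{\langle\widehat{M}-\widehat{N},\widehat{M}-\widehat{B}\rangle}$ simply by negating both arguments of the bilinear form --- so the Grothendieck-group constraint from the four-term exact sequence (which, for the record, reads $\widehat{M}+\widehat{A}=\widehat{B}+\widehat{N}$, not $\widehat{B}+\widehat{A}=\widehat{M}+\widehat{N}$) is never actually needed. With these corrections your argument coincides with the paper's.
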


\begin{proof}
Since the restriction of \emph{the componentwise twisted} multiplication on $\Iso(\ca)$ coincides with the twisted multiplication of $\ch^e_{tw}(\ca)$, it is trivial to check the relations (\ref{relation of same lattice in twisted modified 1})-(\ref{relation of same lattice in twisted modified 3}).  And it is not hard to check the relations (\ref{relation in twisted modified 4 })-(\ref{relation in twisted modified 6}) and  (\ref{relation of not-adjacent in twisted modified 8})-(\ref{relation of not-adjacent in twisted modified 10}) by the definition of \emph{the componentwise Euler form}.
For (\ref{relation of not-adjacent in twisted modified 7}), we have
\begin{eqnarray*}
U_{B,n}\star U_{A,n+1}&=&U_{B,n}\diamond U_{A,n+1}\\
&=&\sum\limits_{M,N\in \Iso(\ca)}\gamma_{AB}^{MN}\frac{a_Aa_B}{a_Ma_N}\langle\widehat{B}-\widehat{M}, \widehat{M}\rangle K_{\widehat{B}-\widehat{M},n+1}\diamond U_{N,n+1} \diamond U_{M,n}\\
&=&\sum\limits_{M,N\in \Iso(\ca)}\gamma_{AB}^{MN}\frac{a_Aa_B}{a_Ma_N}\frac{\langle \widehat{B}-\widehat{M},\widehat{M}\rangle\sqrt{(\widehat{B}-\widehat{M},\widehat{N})}}{\sqrt{\langle \widehat{B}-\widehat{M},\widehat{N}\rangle\langle \widehat{B}-\widehat{M},\widehat{M}\rangle}\sqrt{(\widehat{B}-\widehat{M},\widehat{M})}}\\
&&U_{N,n+1}\star U_{M,n}\star K_{\widehat{B}-\widehat{M},n+1}\\
&=&\sum\limits_{M,N\in \Iso(\ca)}\gamma_{AB}^{MN}\frac{a_Aa_B}{a_Ma_N}\sqrt{\langle\widehat{M}-\widehat{N}, \widehat{M}-\widehat{B}\rangle} U_{N,n+1}\star U_{M,n}\star K_{\widehat{B}-\widehat{M},n+1},
\end{eqnarray*}
which completes the proof.
\end{proof}

\begin{theorem}\label{theorem of isomorphism of naive lattice algebras and modified hall algebras}
The naive lattice algebra $\cn(\ca)$ is isomorphic to the componentwise twisted modified Ringel-Hall algebra $\cm\ch_{ctw}(\ca)$.
\end{theorem}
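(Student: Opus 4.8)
The plan is to prove the theorem by comparing the two descriptions by generators and relations that are already in hand. On the naive side, Proposition~\ref{proposition of naive lattice algebra of hall algebra} presents $\cn(\ca)$ on the symbols $Y_A^{(n)}$, $K_\alpha^{(n)}$ (for $A\in\Iso(\ca)$, $\alpha\in K_0(\ca)$, $n\in\Z$) subject to the defining relations (\ref{relation in naive 1})--(\ref{relation in naive 10}); on the Hall side, the discussion preceding Proposition~\ref{proposition twisted multiplication in modified hall algebras}, together with that proposition, presents $\cm\ch_{ctw}(\ca)$ on the symbols $U_{A,n}$, $K_{\alpha,n}$ subject to the relations (\ref{relation of same lattice in twisted modified 1})--(\ref{relation of not-adjacent in twisted modified 10}). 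The candidate for the isomorphism is the assignment $Y_A^{(n)}\mapsto U_{A,n}$, $K_\alpha^{(n)}\mapsto K_{\alpha,n}$, with the inverse candidate $U_{A,n}\mapsto Y_A^{(n)}$, $K_{\alpha,n}\mapsto K_\alpha^{(n)}$.

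The heart of the argument is then a completely mechanical comparison: under the substitution $Y_A^{(n)}\leftrightarrow U_{A,n}$, $K_\alpha^{(n)}\leftrightarrow K_{\alpha,n}$, relation (\ref{relation in naive 1}) turns into (\ref{relation of same lattice in twisted modified 1}), relation (\ref{relation in naive 10}) into (\ref{relation of not-adjacent in twisted modified 10}), and each relation in between into its evident partner, the least transparent case being the braiding relation (\ref{relation of adjacent 4}), which is literally (\ref{relation of not-adjacent in twisted modified 7}) after the substitution. Hence, once one knows that (\ref{relation of same lattice in twisted modified 1})--(\ref{relation of not-adjacent in twisted modified 10}) is a \emph{complete} set of defining relations for $\cm\ch_{ctw}(\ca)$, both candidate maps are well defined and are mutually inverse algebra homomorphisms, and the theorem follows.

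The one point that genuinely needs to be nailed down is therefore that the relations in Proposition~\ref{proposition twisted multiplication in modified hall algebras} are defining and not merely valid. I would deduce this from Proposition~\ref{proposition modified hall algebras}, which presents $\cm\ch(\ca)$ by the untwisted relations (\ref{relation in modified 1})--(\ref{relation in modified 10}), together with the fact that $\cm\ch_{ctw}(\ca)$ is the twist of the $K_0(\cc^b(\ca))$-graded algebra $\cm\ch(\ca)$ by the bilinear form $\langle-,-\rangle_{cw}$ on the grading group: twisting a group-graded algebra by such a form (a bicharacter of the grading group) carries a presentation with homogeneous generators and relations to the one obtained by inserting the corresponding scalar factors, and those factors are exactly the ones produced in the proof of Proposition~\ref{proposition twisted multiplication in modified hall algebras}. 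Alternatively --- and closer to the style of the rest of the paper --- this step can be replaced by a basis comparison: the candidate $\Phi$ is surjective because its image contains all the generators of $\cm\ch_{ctw}(\ca)$, while for injectivity one uses Lemma~\ref{lemma of iso between tensor algebras and naive algebras by Kap}, which says the ordered products over the distinct components of the basis $\{[B]*k_\alpha\}$ of $\ch^e_{tw}(\ca)$ form a basis of $\cn(\ca)$; one then checks that $\Phi$ sends these, after commuting the $K$'s past the $U$'s within each fixed degree and using that elements in degrees differing by at least $2$ commute, to nonzero scalar multiples of the PBW-type basis of $\cm\ch_{ctw}(\ca)=\cm\ch(\ca)$ from Proposition~\ref{proposition basis of modified}. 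I expect this reshuffling between the two bases --- equivalently, the verification that the twisted relations are defining --- to be the only nontrivial bookkeeping; the relation-by-relation comparison itself is immediate.
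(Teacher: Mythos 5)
Your proposal is correct and follows essentially the same route as the paper: the paper's proof likewise just matches the presentation of $\cn(\ca)$ in Proposition~\ref{proposition of naive lattice algebra of hall algebra} against the presentation of $\cm\ch_{ctw}(\ca)$ in Proposition~\ref{proposition twisted multiplication in modified hall algebras} via $U_{A,n}\leftrightarrow Y_A^{(n)}$, $K_{\alpha,n}\leftrightarrow K_\alpha^{(n)}$. Your extra care in justifying that the twisted relations are actually \emph{defining} (via the bicharacter-twist argument or the basis comparison with Proposition~\ref{proposition basis of modified}) is a point the paper only asserts implicitly before Proposition~\ref{proposition twisted multiplication in modified hall algebras}, so your write-up is, if anything, more complete.
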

\begin{proof}
Following Proposition \ref{proposition of naive lattice algebra of hall algebra} and Proposition \ref{proposition twisted multiplication in modified hall algebras}, there is an isomorphism of algebras
$$\Theta:\cm\ch_{ctw}(\ca)\rightarrow\cn(\ca),$$
by setting
$$U_{A,n}\mapsto Y_A^{(n)}~\text{and}~K_{\alpha,n}\mapsto K_\alpha^{(n)}.$$
\end{proof}

\section{Modified Ringel-Hall Algebras and Lattice Algebras}\label{section modified and lattice}
\subsection{Derived Hall algebras}
Let $\ct$ be a $k$-additive triangulated category with translation $[1]$ satisfying
\begin{itemize}
\item[(i)] $\dim_k\Hom_{\ct}(X, Y)<\infty$ for any two objects $X$ and $Y$;
\item[(ii)] $\End_{\ct}(X)$ is local for any indecomposable object $X\in\ct$;
\item[(iii)] $\ct$ is (left) locally finite; that is, $\sum_{i\geq 0}\dim_k\Hom_{\ct}(X[i], Y)<\infty$ for any $X$ and $Y$ in $\ct$.
\end{itemize}

The {\it derived Hall algebra} $\cd\ch(\ct)$ of the triangulated category $\ct$ is the $\mathbb{Q}$-space with the basis $\{[X]|X\in\ct\}$ and the multiplication is defined by
$$[X][Y]=\sum_{[L]\in\Iso(\ct)}\frac{|\Ext^1_\ct(X, Y)_L|}{\prod_{i\geq 0}|\Hom_{\ct}(X[i], Y)|^{(-1)^i}}[L],$$
where $\Ext^1_\ct(X, Y)_L$ is defined to be $\Hom_{\ct}(X, Y[1])_{L[1]}$ which denotes the subset of $\Hom(X, Y[1])$ consisting of morphisms $l: X\rightarrow Y[1]$ whose cone $\Cone(l)$ is isomorphic to $L[1]$. Here we use the structure coefficient given by M. Kontsevich and Y. Soibelman in \cite{KS} which is different from the one introduced by B. To\"{e}n \cite{T} and Xiao-Xu \cite{XX1}, however it is proved in \cite{XX2} that both derived Hall algebras with these two different structure coefficients are isomorphic.

In particular, for the hereditary abelian category $\ca$ satisfying the finiteness conditions (1) and (2) in Section \ref{section introduction}, it is easy to describe the derived Hall algebra of $\ca$ by generators
$$\{Z_A^{[n]}|A\in\Iso(\ca), n\in\mathbb{Z}\},$$
and relations in terms of $\ca$, where $Z_A^{[n]}$ is the stalk complex with the non-zero component $A$ sitting in the degree $n$.

\begin{proposition}
$\cd\ch(\ca)$ is an associative and unital $\mathbb{Q}$-algebra generated by the set $$\{Z_A^{[n]}|A\in\Iso(\ca), n\in\mathbb{Z}\},$$ and subject to the following relations:
\begin{eqnarray}
Z_A^{[n]}Z_B^{[n]}&=&\sum\limits_{C\in \Iso(\ca)}\frac{|\Ext^1_\ca(A,B)_C|}{|\Hom_\ca(A,B)|}Z_C^{[n]},\\
Z_B^{[n]}Z_A^{[n+1]}&=&\sum\limits_{M, N\in\Iso(\ca)}\gamma_{AB}^{MN}\frac{a_Aa_B}{a_Ma_N}\frac{1}{\langle \widehat{N}, \widehat{M}\rangle}Z_N^{[n+1]}Z_M^{[n]},\\
Z_B^{[n]}Z_A^{[m]}&=&\langle \widehat{A}, \widehat{B}\rangle^{(-1)^{m-n}}Z_A^{[m]}Z_B^{[n]}\ \text{for~} m>n+1.
\end{eqnarray}
\end{proposition}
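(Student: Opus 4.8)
The plan is to verify the three asserted relations by direct computation in the derived Hall algebra $\cd\ch(\ca)$, using the definition of the structure coefficients together with the standard homological vanishing available for a hereditary abelian category: for $A,B\in\ca$ one has $\Hom_{D^b(\ca)}(Z_A^{[m]},Z_B^{[n]})=0$ unless $n-m\in\{0,1\}$, namely $\Hom(Z_A^{[n]},Z_B^{[n]})=\Hom_{\ca}(A,B)$, $\Hom(Z_A^{[n]},Z_B^{[n+1]})=\Ext^1_{\ca}(A,B)$, and all higher $\Hom$'s vanish because $\ca$ is hereditary. The first step would be to establish that $\cd\ch(\ca)$ is generated by the stalk complexes $Z_A^{[n]}$; this follows from the fact that every object of $D^b(\ca)$ is isomorphic to a direct sum of shifted stalk complexes (again because $\ca$ is hereditary), so any basis element $[X]$ is a structure-coefficient-weighted combination of products of $Z_A^{[n]}$'s. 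Associativity and unitality are inherited from the general derived Hall algebra result of Kontsevich--Soibelman / To\"en / Xiao--Xu cited in the excerpt.

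The second step is the single-degree relation. Here $X=Z_A^{[n]}$ and $Y=Z_B^{[n]}$ sit in the heart, so $\Ext^1_{\ct}(X,Y)_L=\Ext^1_{\ca}(A,B)_{C}$ with $L=Z_C^{[n]}$, and the denominator $\prod_{i\ge 0}|\Hom_{\ct}(X[i],Y)|^{(-1)^i}$ collapses to $|\Hom_{\ca}(A,B)|$ since $\Hom_{\ct}(X[i],Y)=\Hom_{D^b(\ca)}(Z_A^{[n+i]},Z_B^{[n]})=0$ for $i\ge 1$. This reproduces the ordinary Ringel-Hall product inside degree $n$ and gives the first relation verbatim. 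The third step is the far-apart relation for $m>n+1$: since $\Hom_{D^b(\ca)}(Z_A^{[m]},Z_B^{[n]}[1])=\Hom(Z_A^{[m]},Z_B^{[n+1]})=0$ (as $n+1<m$) and likewise in the opposite order the only possibly nonzero $\Hom$'s are in a single cohomological degree, both products $Z_B^{[n]}Z_A^{[m]}$ and $Z_A^{[m]}Z_B^{[n]}$ reduce to the single term $Z_A^{[m]}\oplus Z_B^{[n]}$ weighted by reciprocal powers of $|\Hom|$'s; comparing the two denominators yields exactly the factor $\langle\widehat A,\widehat B\rangle^{(-1)^{m-n}}$, because $\Hom_{\ct}(Z_A^{[m]}[i],Z_B^{[n]})$ is nonzero only for the unique $i$ with $m+i=n$ or $m+i=n+1$, i.e. $i=n-m$ or $i=n-m+1$, contributing $|\Hom_{\ca}(A,B)|^{(-1)^{n-m}}$ and $|\Ext^1_{\ca}(A,B)|^{(-1)^{n-m+1}}$ whose product is $\langle\widehat A,\widehat B\rangle^{(-1)^{n-m}}=\langle\widehat A,\widehat B\rangle^{(-1)^{m-n}}$.

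The genuinely substantive step, and the one I expect to be the main obstacle, is the adjacent relation $Z_B^{[n]}Z_A^{[n+1]}=\sum_{M,N}\gamma_{AB}^{MN}\frac{a_Aa_B}{a_Ma_N}\frac{1}{\langle\widehat N,\widehat M\rangle}Z_N^{[n+1]}Z_M^{[n]}$. Here one must expand $Z_B^{[n]}Z_A^{[n+1]}$: a morphism $l:Z_B^{[n]}\to Z_A^{[n+1]}[1]=Z_A^{[n+2]}$ lies in $\Hom(Z_B^{[n]},Z_A^{[n+2]})=\Ext^2_{\ca}(B,A)=0$, so $Z_B^{[n]}Z_A^{[n+1]}$ is a single term $\frac{1}{?}[Z_B^{[n]}\oplus Z_A^{[n+1]}]$ with denominator $\prod_{i\ge0}|\Hom(Z_B^{[n+i]},Z_A^{[n+1]})|^{(-1)^i}=|\Hom_{\ca}(B,A)|^{-1}\cdot 1=|\Hom_{\ca}(B,A)|^{-1}$ — wait, the only nonzero $\Hom$ is at $i=1$, giving $|\Ext^1_{\ca}(B,A)|^{-1}$. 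Then one must re-expand the object $Z_B^{[n]}\oplus Z_A^{[n+1]}$ in the ``opposite order'' basis $\{Z_N^{[n+1]}Z_M^{[n]}\}$; this is where the combinatorial quantity $\gamma_{AB}^{MN}$ and the set $V(M,B,A,N)$ enter, exactly as in the analysis of the relation (\ref{relation in modified 7}) of the modified Ringel-Hall algebra. Concretely, computing $Z_N^{[n+1]}Z_M^{[n]}$ for each $M,N$ and counting how many times the class $[Z_B^{[n]}\oplus Z_A^{[n+1]}]$ appears gives a sum over pairs $(M,N)$ of the number of four-term exact sequences $0\to M\to B\to A\to N\to 0$ divided by automorphism factors, precisely $\gamma_{AB}^{MN}a_Aa_B/(a_Ma_N)$, with the extra $\langle\widehat N,\widehat M\rangle^{-1}$ coming from the derived Hall denominator $\prod_i|\Hom(Z_N^{[n+1+i]},Z_M^{[n]})|^{(-1)^i}$ which here is $|\Hom_{\ca}(N,M)|\cdot|\Ext^1_{\ca}(N,M)|^{-1}=\langle\widehat N,\widehat M\rangle$. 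Bookkeeping these automorphism and Hom/Ext cardinalities correctly, and matching them against the definition of $\gamma_{AB}^{MN}$, is the delicate part; the identity is essentially the shadow in $D^b(\ca)$ of the octahedral/long-exact-sequence combinatorics already encoded in Proposition \ref{proposition modified hall algebras}, so one may either redo the count directly or invoke the known presentation of $\cd\ch(\ca)$ from \cite{XX1, XX2} and translate.
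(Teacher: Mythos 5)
Your overall strategy (computing directly from the definition of $\cd\ch(\ca)$ and using heredity of $\ca$) is the right one, and your verification of the equal-degree relation is correct, but the step you yourself flag as the substantive one fails because your shift convention is backwards, and this reverses which products are single terms and which are genuine sums. The stated relations force the convention $Z_A^{[m]}[1]=Z_A^{[m-1]}$, so that $\Hom_{\ct}(Z_A^{[m]},Z_B^{[n]})\cong\Ext_{\ca}^{m-n}(A,B)$; in particular $\Ext^1_{\ct}(Z_B^{[n]},Z_A^{[n+1]})=\Hom_{\ct}(Z_B^{[n]},Z_A^{[n]})\cong\Hom_{\ca}(B,A)$, which is \emph{not} zero. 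Hence $Z_B^{[n]}Z_A^{[n+1]}$ is the product that expands: each $f\in\Hom_{\ca}(B,A)$ gives a triangle whose middle term is $Z_{\ker f}^{[n]}\oplus Z_{\operatorname{Coker} f}^{[n+1]}$ by heredity, the number of $f$ with $(\ker f,\operatorname{Coker} f)\cong(M,N)$ equals $\gamma_{AB}^{MN}a_Aa_B/(a_Ma_N)$, and the To\"{e}n denominator for this product is $1$. It is the normally ordered product $Z_N^{[n+1]}Z_M^{[n]}$ that is a single term, namely $\langle\widehat N,\widehat M\rangle\,[Z_N^{[n+1]}\oplus Z_M^{[n]}]$ (its denominator is $|\Ext^1_{\ca}(N,M)|\cdot|\Hom_{\ca}(N,M)|^{-1}=\langle\widehat N,\widehat M\rangle^{-1}$), which is exactly the source of the factor $\langle\widehat N,\widehat M\rangle^{-1}$. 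Your plan --- treating $Z_B^{[n]}Z_A^{[n+1]}$ as the single class $[Z_B^{[n]}\oplus Z_A^{[n+1]}]$ and then ``re-expanding'' that class in the family $\{Z_N^{[n+1]}Z_M^{[n]}\}$ --- is an inversion of a triangular system; it would produce alternating-sign coefficients and cannot yield the manifestly positive sum in the stated relation. The same confusion appears in your treatment of the far-apart relation, where you count contributions at negative values of $i$ inside a product taken over $i\geq0$: under the correct convention the nonzero factors sit at $i=m-n$ and $i=m-n-1$ in the denominator of $Z_A^{[m]}Z_B^{[n]}$ (giving $\langle\widehat A,\widehat B\rangle^{(-1)^{m-n}}$), while the denominator of $Z_B^{[n]}Z_A^{[m]}$ is $1$; your final formula is right but the computation producing it is not.

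A second, independent gap: you only check that the three relations \emph{hold} in $\cd\ch(\ca)$, not that they are \emph{defining}. To prove the proposition one must also show that the abstract algebra on these generators and relations maps isomorphically onto $\cd\ch(\ca)$, which requires a PBW-type argument: the relations suffice to rewrite any word as a linear combination of ordered monomials $Z_{A^r}^{[r]}\cdots Z_{A^l}^{[l]}$, and these monomials are linearly independent in $\cd\ch(\ca)$ because each equals, up to an explicit nonzero scalar, the basis element $[\bigoplus_i Z_{A^i}^{[i]}]$ (this is precisely the normal-form computation of Proposition \ref{direct sum to multiplication}). Without this step, one has only a surjection from the presented algebra onto $\cd\ch(\ca)$.
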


For any $[M], [N]\in\Iso(D^b(\ca))$, define the Euler form
$$\langle [M], [N]\rangle_t=\sqrt{\prod\limits_{i\in\mathbb{Z}}|\Hom_{D^b(\ca)}(M, N[i])|^{(-1)^i}},$$ also it can descend to the Grothendieck group $K_0(D^b(\ca))$. Moreover it coincides with the Euler form of $K_0(\ca)$ over the stalk complexes. And then the multiplication of the twisted derived Hall algebra $\cd\ch_{tw}(\ca)$ is given by
$$[M]\triangle[N]=\langle \widehat{M}, \widehat{N}\rangle_t[M][N],~\text{for any}~[M], [N]\in\Iso(D^b(\ca)).$$

\begin{definition}[\cite{SX}]\label{definition SX}
The extended twisted derived Hall algebra $\cd\ch^{e}_{tw}(\ca)$ is the associative algebra
generated by all the elements $$Z_A^{[n]}, K_\alpha,$$
for all $A\in\Iso(\ca), n\in\mathbb{Z}~\text{and}~\alpha\in K_{0}(\ca)$, and with the following defining relations.
\begin{eqnarray}
K_\alpha\triangle K_\beta=K_{\alpha+\beta},&&K_\alpha\triangle Z_A^{[n]}=\sqrt{(\widehat{A},\alpha)^{(-1)^n}}Z_A^{[n]}\triangle K_\alpha,\\
Z_A^{[n]}\triangle Z_B^{[n]}&=&\sum\limits_{C\in \Iso(\ca)}\sqrt{\langle \widehat{A}, \widehat{B}\rangle} \frac{|\Ext^1_\ca(A,B)_C|}{|\Hom_\ca(A,B)|}Z_C^{[n]},\label{relation twist tri ZnZn}\\
Z_B^{[n]}\triangle Z_A^{[n+1]}&=&\sum\limits_{M, N\in\Iso(\ca)}\gamma_{AB}^{MN}\frac{a_Aa_B}{a_Ma_N}\frac{1}{\sqrt{\langle \widehat{B}, \widehat{A}\rangle}\sqrt{\langle \widehat{N}, \widehat{M}\rangle}}Z_N^{[n+1]}\triangle Z_M^{[n]},\label{relation twist tri ZnZn+1}\\
Z_B^{[n]}\triangle Z_A^{[m]}&=&\sqrt{(\widehat{A} , \widehat{B})^{(-1)^{n-m}}}Z_A^{[m]}\triangle Z_B^{[n]}\ \text{for~} m>n+1.\label{relation twist tri ZnZm}
\end{eqnarray}
\end{definition}

\subsection{Lattice algebras}
For any objects $A, B, C\in\ca$, we use the symbol $g^C_{AB}$ to denote the number of subobject $B'$ of $C$ such that $B'\cong B$ and $C/B'\cong A$. Then one has the Riedtmann-Peng formula (see \cite{Rie,P2}) $$g^C_{AB}=\frac{|\Ext_{\ca}^1(A, B)_C|}{|\Hom_{\ca}(A, B)|}\frac{a_C}{a_Aa_B}.$$

\begin{definition}[\cite{Kap}]
The lattice algebra $\cl(\ca)$ is generated by the elements
$$X_A^{(n)}, K_\alpha,$$
for all $A\in\Iso(\ca), n\in\mathbb{Z}$ and $\alpha\in K_0(\ca)$, subject to the relations
\begin{eqnarray}
K_\alpha K_\beta=K_{\alpha+\beta}, &&K_\alpha X_A^{(n)}=\sqrt{(\widehat{A},\alpha)^{(-1)^n}}X_A^{(n)}K_\alpha,\\
X_A^{(n)}X_B^{(n)}&=&\sum\limits_{C\in \Iso(\ca)}\sqrt{\langle \widehat{A}, \widehat{B}\rangle}g_{AB}^{C}X_C^{(n)},\\
X_B^{(n)}X_A^{(n+1)}&=&\sum\limits_{M, N\in\Iso(\ca)}\gamma_{AB}^{MN}\sqrt{\langle \widehat{M}- \widehat{N},\widehat{M}-\widehat{B}\rangle}X_N^{(n+1)}X_M^{(n)}K_{\widehat{B}-\widehat{M}}^{(-1)^{n+1}},\\
X_B^{(n)}X_A^{(m)}&=&\sqrt{(\widehat{A} , \widehat{B})^{(-1)^{n-m}(n-m+1)}}X_A^{(m)}X_B^{(n)}\ \text{for~} |m-n|\geq 2
\end{eqnarray}
\end{definition}

By the result of Sheng and Xu \cite{SX}, we know that the lattice algebra coincides with the extended twisted derived Hall algebra $\cd\ch^e_{tw}(\ca)$ in Definition \ref{definition SX}.

Moreover, in terms of alternative generators $$Z_A^{(n)}=X_A^{(n)}a_{A}, K_\alpha,$$ for all $A\in\Iso(\ca)$, $n\in\mathbb{Z}$  and $\alpha\in K_0(\ca)$, one can easily get that the lattice algebra $\cl(\ca)$ is isomorphic to the algebra $\cl_*(\ca)$ which is described by the following proposition and  is called  the {\it Drinfeld dual lattice algebra} of $\ca$.

\begin{proposition}\label{proposition lattice algebra}
The lattice algebra $\cl(\ca)$ is isomorphic to the algebra $\cl_*(\ca)$ generated by the symbols $Z_A^{(n)}$ and $K_\alpha$, for all $A\in\Iso(\ca)$, $n\in\mathbb{Z}$ and $\alpha\in K_0(\ca)$, with defining relations as follows.
\begin{eqnarray}
K_\alpha K_\beta=K_{\alpha+\beta}, &&K_\alpha Z_A^{(n)}=\sqrt{(\widehat{A},\alpha)^{(-1)^n}}Z_A^{(n)}K_\alpha,\\
Z_A^{(n)}Z_B^{(n)}&=&\sum\limits_{C\in \Iso(\ca)}\sqrt{\langle \widehat{A}, \widehat{B}\rangle} \frac{|\Ext^1_\ca(A,B)_C|}{|\Hom_\ca(A,B)|}Z_C^{(n)},\\
Z_B^{(n)}Z_A^{(n+1)}&=&\sum\limits_{M, N\in\Iso(\ca)}\gamma_{AB}^{MN}\frac{a_Aa_B}{a_Ma_N}\sqrt{\langle \widehat{M}- \widehat{N},\widehat{M}-\widehat{B}\rangle}Z_N^{(n+1)}Z_M^{(n)}K_{\widehat{B}-\widehat{M}}^{(-1)^{n+1}},\\
Z_B^{(n)}Z_A^{(m)}&=&\sqrt{(\widehat{A} , \widehat{B})^{(-1)^{n-m}(n-m+1)}}Z_A^{(m)}Z_B^{(n)}\ \text{for~} |m-n|\geq 2.
\end{eqnarray}
\end{proposition}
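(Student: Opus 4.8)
The plan is to realise the isomorphism as the invertible change of generators indicated just before the statement. Define
$$\Phi\colon \cl(\ca)\longrightarrow\cl_*(\ca),\qquad X_A^{(n)}\longmapsto a_A^{-1}Z_A^{(n)},\quad K_\alpha\longmapsto K_\alpha,$$
with candidate inverse $Z_A^{(n)}\mapsto a_A X_A^{(n)}$, $K_\alpha\mapsto K_\alpha$. Since both $\cl(\ca)$ and $\cl_*(\ca)$ are given by generators and relations, and since $a_A=|\aut_{\ca}(A)|$ is a positive integer, hence an invertible scalar in $\mathbb{Q}(v)$, it suffices to check that under the substitution $X_A^{(n)}=a_A^{-1}Z_A^{(n)}$ every defining relation of $\cl(\ca)$ becomes a nonzero scalar multiple of a defining relation of $\cl_*(\ca)$, and conversely that under $Z_A^{(n)}=a_A X_A^{(n)}$ every defining relation of $\cl_*(\ca)$ becomes a scalar multiple of one of $\cl(\ca)$. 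Then $\Phi$ descends to a well-defined algebra homomorphism admitting a two-sided inverse, hence an isomorphism.

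First I would dispose of the relations not involving the structure constants. The relations $K_\alpha K_\beta=K_{\alpha+\beta}$ and $K_\alpha X_A^{(n)}=\sqrt{(\widehat A,\alpha)^{(-1)^n}}X_A^{(n)}K_\alpha$ are visibly preserved: in the latter the scalar $a_A^{-1}$ appears on both sides and cancels, leaving $K_\alpha Z_A^{(n)}=\sqrt{(\widehat A,\alpha)^{(-1)^n}}Z_A^{(n)}K_\alpha$. Likewise, in the distant commutation relation for $|m-n|\ge 2$ the factors $a_A^{-1}$ and $a_B^{-1}$ occur on both sides and cancel, so the relation is reproduced verbatim with $Z$ in place of $X$.

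The only relations requiring an actual (but short) computation are the two Hall-type ones. For $X_A^{(n)}X_B^{(n)}=\sum_C\sqrt{\langle\widehat A,\widehat B\rangle}\,g_{AB}^C\,X_C^{(n)}$, the substitution followed by clearing the factor $a_A^{-1}a_B^{-1}$ turns the coefficient of $Z_C^{(n)}$ into $\sqrt{\langle\widehat A,\widehat B\rangle}\,g_{AB}^C\,\tfrac{a_Aa_B}{a_C}$, which the Riedtmann--Peng formula rewrites as $\sqrt{\langle\widehat A,\widehat B\rangle}\,\tfrac{|\Ext^1_\ca(A,B)_C|}{|\Hom_\ca(A,B)|}$, exactly the coefficient in the relation defining $\cl_*(\ca)$. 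For $X_B^{(n)}X_A^{(n+1)}=\sum_{M,N}\gamma_{AB}^{MN}\sqrt{\langle\widehat M-\widehat N,\widehat M-\widehat B\rangle}\,X_N^{(n+1)}X_M^{(n)}K_{\widehat B-\widehat M}^{(-1)^{n+1}}$, the substitution attaches the factor $\tfrac{a_Aa_B}{a_Ma_N}$ to each summand, which is precisely the extra factor in the corresponding relation of $\cl_*(\ca)$; here no appeal to Riedtmann--Peng is needed. The reverse substitutions are settled by reading the same identities backwards, using $g_{AB}^C=\tfrac{|\Ext^1_\ca(A,B)_C|}{|\Hom_\ca(A,B)|}\tfrac{a_C}{a_Aa_B}$.

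I do not expect a serious obstacle here: the content is entirely the bookkeeping of the automorphism-group cardinalities through the Riedtmann--Peng formula. The one point worth making explicit is that the change of variables is invertible over $\mathbb{Q}(v)$, so what one obtains is a genuine isomorphism of presented algebras rather than merely a surjection or a map in one direction; in particular $\cl(\ca)$ and $\cl_*(\ca)$ have the same underlying vector space, so no collapse of the presentation can occur.
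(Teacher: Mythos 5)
Your proof is correct and follows essentially the same route as the paper, which simply asserts the rescaling $Z_A^{(n)}=a_A X_A^{(n)}$ of generators and lets the Riedtmann--Peng formula $g^C_{AB}=\frac{|\Ext^1_\ca(A,B)_C|}{|\Hom_\ca(A,B)|}\frac{a_C}{a_Aa_B}$ convert the structure constants. Your added care about invertibility of the change of variables and the two-sided check is a correct formalization of what the paper leaves implicit.
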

By Proposition 3.3.2 of \cite{Kap}, one can obtain a $\mathbb{Q}$-basis of $\cl_*(\ca)$ as follows.

\begin{proposition}\label{proposition basis of lattice algebra }
The elements $K_\alpha Z_{A^r}^{(r)}Z_{A^{r-1}}^{(r-1)}\cdots Z_{A^l}^{(l)}$ forms a basis of $\cl_*(\ca)$, for all $A^i\in\Iso(\ca)(r, l\in\mathbb{Z}, l\leq i\leq r)$ and $\alpha\in K_0(\ca)$.
\end{proposition}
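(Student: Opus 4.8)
The plan is to obtain the basis of $\cl_*(\ca)$ by transporting the known basis of Kapranov's lattice algebra $\cl(\ca)$ through the isomorphism $\cl(\ca)\cong\cl_*(\ca)$ of Proposition~\ref{proposition lattice algebra}. That isomorphism is the change of generators $Z_A^{(n)}=a_AX_A^{(n)}$ together with $K_\alpha\mapsto K_\alpha$, so it rescales each ordered monomial $K_\alpha X_{A^r}^{(r)}X_{A^{r-1}}^{(r-1)}\cdots X_{A^l}^{(l)}$ by the nonzero number $\prod_{i=l}^{r}a_{A^i}$. Proposition~3.3.2 of~\cite{Kap} asserts that precisely these ordered monomials in the $X_A^{(n)}$---indexed by $\alpha\in K_0(\ca)$, integers $l\le r$, and tuples $(A^i)_{l\le i\le r}$ in $\Iso(\ca)$---form a basis of $\cl(\ca)$; applying the rescaling then yields the stated family in $\cl_*(\ca)$, so the statement follows at once from~\cite{Kap} after rescaling by scalars in $\mathbb{Q}^\times$.

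For a self-contained argument I would instead work inside the extended twisted derived Hall algebra. Combining the Sheng--Xu identification $\cl(\ca)=\cd\ch^e_{tw}(\ca)$ with Proposition~\ref{proposition lattice algebra} gives $\cl_*(\ca)\cong\cd\ch^e_{tw}(\ca)$, the composite sending $Z_A^{(n)}$ to the stalk--complex class $Z_A^{[n]}$ and $K_\alpha$ to $K_\alpha$ (compare the relations in Definition~\ref{definition SX} with those of $\cl_*(\ca)$). By its definition $\cd\ch(\ca)$ has basis $\{[X]\mid X\in\Iso(D^b(\ca))\}$, and adjoining the $K_\alpha$ makes $\cd\ch^e_{tw}(\ca)$ free over $\bigoplus_{\alpha\in K_0(\ca)}\mathbb{Q}(v)K_\alpha$ on these classes. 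Since $\ca$ is hereditary, every $X\in D^b(\ca)$ decomposes uniquely as $X\cong\bigoplus_{i\in\mathbb{Z}}A^i[-i]$ with $A^i\in\Iso(\ca)$ and $A^i=0$ for all but finitely many $i$, so the index set of $\{K_\alpha[X]\}$ is exactly the one in the statement. The key computation is that $Z_{A^r}^{[r]}\triangle Z_{A^{r-1}}^{[r-1]}\triangle\cdots\triangle Z_{A^l}^{[l]}$ is a nonzero scalar multiple of $[\bigoplus_i A^i[-i]]$: by induction on the number of nonzero $A^i$, put $X'=\bigoplus_{j>i}A^j[-j]$; then $\Hom_{D^b(\ca)}\big(X',(A^i[-i])[1]\big)=\bigoplus_{j>i}\Ext^{j-i+1}_{\ca}(A^j,A^i)=0$ because each exponent is $\ge 2$ and $\ca$ is hereditary, so the derived Hall product $[X']\triangle[A^i[-i]]$ has only the split term and equals $c_i\,[X'\oplus A^i[-i]]$ with $c_i\in\mathbb{Q}(v)^\times$ (the reciprocal of the finite $\Hom$-denominator, times the twisting factor). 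Prepending $K_\alpha$ contributes one more nonzero scalar, so the stated family differs from the basis $\{K_\alpha[X]\}$ only by nonzero rescalings, hence is itself a basis.

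The step I expect to carry the actual content---in either route---is the insistence on writing the monomials with the degrees \emph{decreasing} from left to right; this is exactly what forces all the intermediate $\Ext$-groups to vanish in the derived--Hall picture, and it is the same phenomenon encoded in the straightening relation~(\ref{relation twist tri ZnZn+1}) and its $\cl_*(\ca)$-analogue. Given that ordering, spanning is automatic from the description of the basis above, and the remaining points---that $(\alpha,(A^i))\mapsto K_\alpha[\bigoplus_iA^i[-i]]$ is a bijection onto the defining basis, together with the scalar bookkeeping---are routine. I would also remark that this parallels Proposition~\ref{proposition basis of modified}, as one would expect from the epimorphism of Theorem~\ref{modifed and lattice}, though I would not invoke that theorem here since it is proved afterwards.
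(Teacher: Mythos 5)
Your first paragraph is exactly the paper's argument: the paper offers no proof beyond citing Kapranov's Proposition~3.3.2 for the monomial basis of $\cl(\ca)$ and transporting it through the rescaling $Z_A^{(n)}=a_A X_A^{(n)}$ of Proposition~\ref{proposition lattice algebra}, so that route is correct and matches. Your alternative derived--Hall route is sound in outline (the key computation is essentially Proposition~\ref{direct sum to multiplication}, proved later in the paper), but one detail is off: the assignment $Z_A^{(n)}\mapsto Z_A^{[n]}$, $K_\alpha\mapsto K_\alpha$ is not itself the isomorphism $\cl_*(\ca)\cong\cd\ch^{e}_{tw}(\ca)$, since relation~(\ref{relation twist tri ZnZn+1}) carries no $K$-factor while the corresponding $\cl_*(\ca)$ relation does, so the Sheng--Xu identification must twist the generators by suitable powers of $K$. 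This only alters the ordered monomials by nonzero scalars and a reindexing of $\alpha$, so your conclusion survives, but the map as you state it is not a homomorphism.
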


\subsection{Relative twisted modified Ringel-Hall algebras}In \cite{LinP}, we define the twisted modified Ringel-Hall algebra $
\cm\ch_{tw}(\ca)$ by the Euler form for $\Iso(\cc^b(\ca))$, {\it i.e.}
$$[M]*[N]=\langle [M],[N]\rangle[M]\diamond[N]$$
for any $[M],[N]\in\Iso(\cc^{b}(\ca))$, where $\langle [M],[N]\rangle=\prod_{p=0}^{+\infty} |\Ext^p_{\cc^b(\ca)}(M, N)|^{(-1)^p}$. In particular, the quantum torus of acyclic complexes is commutative with all the elements of $\cm\ch_{tw}(\ca)$ and we have the following result.
\begin{proposition}[\cite{LinP}]\label{proposition of relations in twist MH(A)}
$\cm\ch_{tw}(\ca)$ is generated by the set
$$\{U_{A,n},K_{\alpha,n}|A\in\Iso(\ca), \alpha\in K_0(\ca), n\in\mathbb{Z}\}$$ with defining relations as follows.
\begin{eqnarray}
U_{A,n}*U_{B,n}&=&\sum\limits_{C\in \Iso(\ca)}\langle \widehat{A},\widehat{B}\rangle\frac{|\Ext^1_{\ca}(A,B)_C|}{|\Hom_{\ca}(A,B)|}U_{C,n}, \\
K_{\alpha,n}*U_{A,n}&=&U_{A,n}*K_{\alpha,n},\\
K_{\alpha, n}*K_{\beta, n}&=&K_{\alpha+\beta, n},
\end{eqnarray}
\begin{eqnarray}
U_{A,n}*K_{\alpha,n+1}&=&K_{\alpha,n+1}*U_{A,n},\\
K_{\alpha,n}*U_{A,n+1}&=&U_{A,n+1}*K_{\alpha,n},\\
K_{\alpha,n}*K_{\beta,n+1}&=&K_{\beta,n+1}*K_{\alpha,n},
\end{eqnarray}
\begin{eqnarray}
\quad U_{B,n}*U_{A,n+1}&=\sum\limits_{M,N\in\Iso(\mathcal{A})}\gamma_{AB}^{MN}\frac{a_Aa_B}{a_Ma_N}\frac{1}{\langle\widehat{B}, \widehat{A}\rangle}U_{N,n+1}*U_{M,n}*K_{\widehat{B}-\widehat{M},n+1},
\end{eqnarray}
and if $m>n+1$, then
\begin{eqnarray}
U_{B,n}*U_{A,m}&=&\langle \widehat{B} , \widehat{A}\rangle^{(-1)^{m-n}}U_{A,m}*U_{B,n},\label{the quasi-commutative of twisted modified hall}\\
U_{B,n}*K_{\alpha,m}=K_{\alpha,m}*U_{B,n},&&U_{B,m}*K_{\alpha,n}=K_{\alpha,n}*U_{B,m}\\
K_{\beta,n}*K_{\alpha,m}&=&K_{\alpha,m}*K_{\beta,n},
\end{eqnarray}
for any $A, B\in\Iso(\ca)$, $\alpha, \beta\in K_{0}(\ca)$ and $m, n\in\mathbb{Z}$. And $K_{\alpha, n}=K_{A_1, n}* K^{-1}_{A_2, n}$, if $\alpha=\widehat{A_1}-\widehat{A_2}\in K_{0}(\ca)$.
\end{proposition}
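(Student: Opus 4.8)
The plan is to transport the known presentation of the untwisted algebra $\cm\ch(\ca)$ (Proposition \ref{proposition modified hall algebras}) across the twist. The starting observation is that $\cm\ch_{tw}(\ca)$ and $\cm\ch(\ca)$ have the same underlying $\mathbb{Q}(v)$-space and the same basis (Proposition \ref{proposition basis of modified}), and that $[M]*[N]=\langle[M],[N]\rangle\,[M]\diamond[N]$ with $\langle[M],[N]\rangle\in\mathbb{Q}^\times$ for all $[M],[N]\in\Iso(\cc^b(\ca))$. Consequently every $\diamond$-monomial in the $U_{A,n}$ and $K_{\alpha,n}$ is a nonzero scalar multiple of the corresponding $*$-monomial, so this set also generates $\cm\ch_{tw}(\ca)$, and each relation of Proposition \ref{proposition modified hall algebras} turns into a relation of the same shape with its monomials rescaled by Euler-form factors.

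The next step is to compute those factors. Only finitely many values of $\langle-,-\rangle=\prod_{p\ge0}|\Ext^p_{\cc^b(\ca)}(-,-)|^{(-1)^p}$ are needed, all of them between two stalk complexes $U_{A,m},U_{B,n}$ or between a stalk complex and a contractible complex $K_{A,m}$. The qualitative facts I would extract are: $\langle U_{A,n},U_{B,n}\rangle=\langle\widehat{A},\widehat{B}\rangle$, which gives the scalar in $U_{A,n}*U_{B,n}=\cdots$; for a contractible $K$ and any $N$ the ratio $\langle[K],[N]\rangle\langle[N],[K]\rangle^{-1}$ equals precisely the scalar by which $[K]$ and $[N]$ $\diamond$-commute, so after the twist all the $K_{\alpha,n}$ become central and every ``$K$ versus $U$'' and ``$K$ versus $K$'' relation degenerates to ordinary commutativity --- this is the centrality of the acyclic quantum torus noted just before the statement; for stalk complexes in degrees $m>n+1$ the $\cc^b(\ca)$-Euler form is no longer symmetric, its antisymmetric part being $\langle\widehat{B},\widehat{A}\rangle^{(-1)^{m-n}}$ (a genuine contribution of the second extension group of $\cc^b(\ca)$), which together with the commutativity of $U_{B,n}\diamond U_{A,m}$ produces (\ref{the quasi-commutative of twisted modified hall}); and for the adjacent relation one rescales the untwisted identity for $U_{B,n}\diamond U_{A,n+1}$, slides the now-central $K_{\widehat{B}-\widehat{M},n+1}$ to the right using $K$-centrality, and checks that the accumulated factors telescope to $1/\langle\widehat{B},\widehat{A}\rangle$.

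It remains to see that the listed relations are \emph{defining}. Let $\mathbf{B}$ be the abstract associative unital $\mathbb{Q}(v)$-algebra on these generators modulo these relations; the above gives a surjection $\mathbf{B}\twoheadrightarrow\cm\ch_{tw}(\ca)$. Since the relations have the same combinatorial shape as the untwisted ones, the same straightening procedure applies: move all $K$'s to the left, merge $K$'s of equal degree, and use the $U_{A,n}*U_{B,n}$ and $U_{B,n}*U_{A,n+1}$ relations (together with commutativity in non-adjacent degrees) to bring the $U$'s into decreasing-degree order, rewriting any monomial as a $\mathbb{Q}(v)$-linear combination of the normal-form monomials of Proposition \ref{proposition basis of modified}. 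Hence $\dim_{\mathbb{Q}(v)}\mathbf{B}$ is bounded by the number of such monomials, which already form a basis of $\cm\ch_{tw}(\ca)$, so the surjection is an isomorphism. I expect the genuinely delicate point to be step two for the adjacent relation: computing $\Ext^\bullet_{\cc^b(\ca)}$ between stalk and contractible complexes accurately and tracking which Euler-form factors cancel; once the acyclic torus is known to be central, the remaining relations fall out routinely.
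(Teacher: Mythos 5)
The paper does not actually prove this proposition --- it is quoted from \cite{LinP} --- so there is no in-paper argument to compare against; but your strategy is exactly the one the paper itself uses for its other twisted presentations (Propositions \ref{proposition twisted multiplication in modified hall algebras} and \ref{proposition relative twisted modified}): keep the generators, rescale each relation of Proposition \ref{proposition modified hall algebras} by the relevant Euler-form factors, and reuse the basis of Proposition \ref{proposition basis of modified} to see the relations are defining. Your outline is correct, and the qualitative facts you extract all check out. The one substantive input, which you rightly flag as the crux, is the value of $\langle U_{A,m},U_{B,n}\rangle$ in $\cc^b(\ca)$; for the record it is $\langle\widehat{A},\widehat{B}\rangle^{(-1)^{n-m}}$ for $n\geq m$ and $1$ for $n<m$, obtained from $\Ext^p_{\cc^b(\ca)}(M,K_{B,n})\cong\Ext^p_{\ca}(M^n,B)$ and the dimension shift $\Ext^{p+1}_{\cc^b(\ca)}(U_{A,m},U_{B,n})\cong\Ext^{p}_{\cc^b(\ca)}(U_{A,m},U_{B,n-1})$ coming from $0\to U_{B,n}\to K_{B,n}\to U_{B,n-1}\to 0$. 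This confirms your three key claims: the same-degree factor is $\langle\widehat{A},\widehat{B}\rangle$; the antisymmetric part for $m>n+1$ is $\langle\widehat{B},\widehat{A}\rangle^{(-1)^{m-n}}$ (though it is carried by $\Ext^{m-n}$ and $\Ext^{m-n+1}$ of $\cc^b(\ca)$, not just $\Ext^2$); and the adjacent-degree factors do telescope to $1/\langle\widehat{B},\widehat{A}\rangle$ once the central $K_{\widehat{B}-\widehat{M},n+1}$ is slid to the right. Two cosmetic points: $\cm\ch_{tw}(\ca)$ is a $\mathbb{Q}$-algebra here (the twist uses the Euler form itself, not its square root, so no $v$ is needed), and in the straightening step one should note that the normal form of Proposition \ref{proposition basis of modified} places the $K$'s in degrees $l+1,\dots,r$ only, which is harmless since the ranges can be padded with zero objects.
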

\begin{definition}\label{definition relative Euler form}
For any $[M],[N]\in\Iso(C^b(\ca))$, define the relative Euler form for $\Iso(C^b(\ca))$ by setting
$$\langle [M],[N]\rangle_{r}=\sqrt{\prod\limits_{i,j\in\mathbb{Z}}\langle \widehat{M^i},\widehat{N^j}\rangle^{(-1)^{j-i+1}(j-i+1)}},$$
 and it is clear that it can descend to $K_{0}(C^b(\ca))$.
\end{definition}

\begin{proposition}\label{proposition relative Euler form}
For any $A, B\in\Iso(\ca)$, $\alpha, \beta\in K_0(\ca)$ and $n, m\in\mathbb{Z}$, we have
\begin{eqnarray}
\langle U_{A,n}, U_{B,m}\rangle_{r}&=&\sqrt{\langle \widehat{A},\widehat{B}\rangle^{(-1)^{m-n+1}(m-n+1)}},\label{relative euler form 1}\\
\langle K_{\alpha,n}, U_{B,m}\rangle_{r}&=&\sqrt{\langle \alpha,\widehat{B}\rangle^{(-1)^{m-n}}},\label{relative euler form 2}\\
\langle U_{B,n}, K_{\alpha,m}\rangle_{r}&=&\sqrt{\langle \widehat{B},\alpha\rangle^{(-1)^{m-n+1}}},\label{relative euler form 3}\\
\langle K_{\alpha,n}, K_{\beta,m}\rangle_{r}&=&1.\label{relative euler form 4}
\end{eqnarray}
\end{proposition}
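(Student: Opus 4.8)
The plan is to reduce the four identities to the induced bilinear form on $K_0(\cc^b(\ca))$ and then carry out an elementary exponent count. First I would use the remark in Definition~\ref{definition relative Euler form} that $\langle-,-\rangle_r$ descends to $K_0(\cc^b(\ca))$, together with the componentwise class map $[M]\mapsto(\widehat{M^i})_{i\in\mathbb{Z}}$ identifying $K_0(\cc^b(\ca))$ with $\bigoplus_{i\in\mathbb{Z}}K_0(\ca)$. Under this identification the stalk complex $U_{A,n}$ has class concentrated in degree $n$ with value $\widehat{A}$ there, whereas the acyclic complex $K_{A,n}$ has class concentrated in the two degrees $n-1$ and $n$, with value $\widehat{A}$ in each. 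Since $K_{\alpha,n}$ is by construction a scalar multiple of $K_{A_1,n}\diamond K^{-1}_{A_2,n}$ for $\alpha=\widehat{A_1}-\widehat{A_2}$, its class is concentrated in degrees $n-1$ and $n$ with value $\alpha$ in each. By bilinearity of $\langle-,-\rangle_r$ it then suffices to evaluate it on these explicit classes, and bilinearity in $\alpha$ and $\beta$ further reduces us to the case $\alpha=\widehat{A}$, $\beta=\widehat{B}$ for single objects in (\ref{relative euler form 2})--(\ref{relative euler form 4}).

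After this reduction the formulas become short arithmetic identities in the exponent $(-1)^{j-i+1}(j-i+1)$ of Definition~\ref{definition relative Euler form}; write $d=m-n$ throughout. For (\ref{relative euler form 1}) only the pair $(i,j)=(n,m)$ contributes, giving the exponent $(-1)^{d+1}(d+1)$ at once. For (\ref{relative euler form 2}) the contributing pairs are $(i,j)\in\{(n-1,m),(n,m)\}$ and the total exponent of $\langle\alpha,\widehat{B}\rangle$ is
\[(-1)^{d+2}(d+2)+(-1)^{d+1}(d+1)=(-1)^d\big((d+2)-(d+1)\big)=(-1)^d.\]
For (\ref{relative euler form 3}) the pairs are $(i,j)\in\{(n,m-1),(n,m)\}$ and the total exponent of $\langle\widehat{B},\alpha\rangle$ is
\[(-1)^{d}d+(-1)^{d+1}(d+1)=(-1)^d\big(d-(d+1)\big)=(-1)^{d+1}.\]
For (\ref{relative euler form 4}) all four pairs $(i,j)\in\{n-1,n\}\times\{m-1,m\}$ contribute, and the total exponent of $\langle\alpha,\beta\rangle$ is
\[(-1)^{d+1}(d+1)+(-1)^{d}(d+2)+(-1)^{d}d+(-1)^{d+1}(d+1)=(-1)^d\big(-2(d+1)+(2d+2)\big)=0,\]
so $\langle K_{\alpha,n},K_{\beta,m}\rangle_r=1$.

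This proposition is purely combinatorial, so I do not expect a genuine obstacle. The one step requiring a little care is fixing the class of $K_{\alpha,n}$ correctly --- namely that it is supported in \emph{both} degrees $n-1$ and $n$, not a single degree --- because it is exactly this ``width two'' support that makes the exponent sums telescope to the stated powers and cancel entirely in (\ref{relative euler form 4}). Once the degree conventions for $U_{A,n}$ and $K_{A,n}$ are pinned down and the descent to $K_0(\cc^b(\ca))$ is invoked, the remaining content is the four displayed cancellations above.
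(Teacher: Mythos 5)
Your proof is correct, and it is essentially the paper's argument: both reduce via bilinearity on $K_0(\cc^b(\ca))$ to the fact that $U_{A,n}$ is supported in degree $n$ while $K_{\alpha,n}$ is supported in degrees $n-1$ and $n$, and then sum the exponents $(-1)^{j-i+1}(j-i+1)$ over the contributing pairs. The only cosmetic difference is that the paper writes these sums as products of previously computed pairings (deriving (\ref{relative euler form 4}) from (\ref{relative euler form 2}), which comes from (\ref{relative euler form 1})), whereas you evaluate each exponent sum directly; your arithmetic checks out in all four cases.
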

\begin{proof}
The identity (\ref{relative euler form 1}) is induced by the Definition \ref{definition relative Euler form}.

For the identity (\ref{relative euler form 2}), if $\alpha=\widehat{A}\in K_0(\ca)$ for an object $A\in\ca$, then
$$\langle K_{\widehat{A},n}, U_{B,m}\rangle_{r}=\langle U_{A,n}, U_{B,m}\rangle_{r}\langle U_{A,n-1}, U_{B,m}\rangle_{r}=\sqrt{\langle \widehat{A},\widehat{B}\rangle^{(-1)^{m-n}}}.$$
If $\alpha=\widehat{A_1}-\widehat{A_2}\in K_0(\ca)$ for objects $A_1, A_2\in\ca$, then $K_{\alpha,n}=K_{\widehat{A_1},n}*K^{-1}_{\widehat{A_2},n}$. So $$\langle K_{\alpha,n}, U_{B,m}\rangle_{r}=\langle K_{\widehat{A_1},n}, U_{B,m}\rangle_{r}\langle K^{-1}_{\widehat{A_2},n}, U_{B,m}\rangle_{r}=\sqrt{\langle \alpha,\widehat{B}\rangle^{(-1)^{m-n}}}.$$ Similarly, one can get the identity (\ref{relative euler form 3}).

For the identity (\ref{relative euler form 4}), it suffices to prove
$$\langle K_{\widehat{A},n}, K_{\widehat{B},m}\rangle_{r}=1,$$
which is deduced from the identities $\langle K_{\widehat{A},n}, K_{\widehat{B},m}\rangle_{r}=\langle K_{\widehat{A},n}, U_{B,m}\rangle_{r}\langle K_{\widehat{A},n}, U_{B,m-1}\rangle_{r}$ and (\ref{relative euler form 2}).
\end{proof}

Let $\cm\ch_{rtw}(\ca)$ be the relative twisted modified Ringel-Hall algebra, with the relative twisted multiplication defined by
$$[M]\circ[N]=\langle [M],[N]\rangle_r[M]*[N],$$
{\it i.e.} $[M]\circ[N]=\langle [M],[N]\rangle_r \langle [M],[N]\rangle [M]\diamond[N]$ for any $[M],[N]\in\Iso(\cc^{b}(\ca))$.
Similarly, it is easy to get the following description of the relative twisted modified Ringel-Hall algebras by the generators and relations.

\begin{proposition}\label{proposition relative twisted modified}
The relative twisted modified Ringel-Hall algebra $\cm\ch_{rtw}(\ca)$ is isomorphic to the associative and unital algebra generated by the set
$$\{U_{A,n},K_{\alpha,n}|A\in\Iso(\ca), \alpha\in K_0(\ca), n\in\mathbb{Z}\}$$ and subject to the following relations (\ref{relation in relative modified 1})-(\ref{relation in relative modified 5}).
\begin{eqnarray}
K_{\alpha,n}\circ K_{\beta,m}=K_{\beta,m}\circ K_{\alpha,n},&& K_{\alpha,n}\circ K_{\beta,n}=K_{\alpha+\beta,n}\label{relation in relative modified 1}\\
K_{\alpha,m}\circ U_{A,n}&=&\sqrt{(\alpha,\widehat{A})^{(-1)^{n-m}}}U_{A,n}\circ K_{\alpha,m},\label{relation in relative modified 2}\\
U_{A,n}\circ U_{B,n}&=&\sum\limits_{C\in\Iso(\ca)}\sqrt{\langle \widehat{A},\widehat{B}\rangle}\frac{|\Ext^1_{\ca}(A,B)_C|}{|\Hom_{\ca}(A,B)|}U_{C,n},\label{relation in relative modified 3}
\end{eqnarray}
\begin{eqnarray}
U_{B,n}\circ U_{A,n+1}&=\sum\limits_{M,N\in\Iso(\ca)}\gamma_{AB}^{MN}\frac{a_Aa_B}{a_Ma_N}\sqrt{\langle\widehat{M}-\widehat{N},\widehat{M}-\widehat{B} \rangle}\label{relation in relative modified 4}\\&U_{N,n+1}\circ U_{M,n}\circ K_{\widehat{B}-\widehat{M},n+1},\nonumber
\end{eqnarray}
\begin{eqnarray}
U_{B,n}\circ U_{A,m}&=&\sqrt{(\widehat{A},\widehat{B})^{(-1)^{n-m}(n-m+1)}}U_{A,m}\circ U_{B,n},~ m-n\geq 2,
\label{relation in relative modified 5}
\end{eqnarray}
 for any $A, B\in\Iso(\ca)$, $\alpha, \beta\in K_{0}(\ca)$ and $m, n\in\mathbb{Z}$. And $K_{\alpha, n}=K_{\widehat{A_1}, n}\circ K^{-1}_{\widehat{A_2}, n}$, if $\alpha=\widehat{A_1}-\widehat{A_2}\in K_{0}(\ca)$.
\end{proposition}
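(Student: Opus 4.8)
The plan is to realize $\cm\ch_{rtw}(\ca)$ as a twist of the already-understood algebra $\cm\ch_{tw}(\ca)$ (equivalently of $\cm\ch(\ca)$) by the bilinear form $\langle-,-\rangle_r$ on $K_0(\cc^b(\ca))$, and then to follow the route of the proof of Proposition~\ref{proposition twisted multiplication in modified hall algebras}. First I would note that, by Definition~\ref{definition relative Euler form}, $\langle-,-\rangle_r$ is bilinear on $K_0(\cc^b(\ca))$ and satisfies $\langle 0,-\rangle_r=\langle-,0\rangle_r=1$, so the prescription $[M]\circ[N]=\langle[M],[N]\rangle_r\,[M]*[N]$ does define an associative unital product on the same underlying space carrying $\cm\ch_{tw}(\ca)$ and $\cm\ch(\ca)$. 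Since each $\circ$-product of elements of $\{U_{A,n},K_{\alpha,n}\mid A\in\Iso(\ca),\alpha\in K_0(\ca),n\in\mathbb{Z}\}$ differs from the corresponding $\diamond$-product by an invertible scalar, this set generates $\cm\ch_{rtw}(\ca)$, and the ordered monomials of Proposition~\ref{proposition basis of modified} (with $\diamond$ replaced by $\circ$) again form a $\mathbb{Q}(v)$-basis.

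Next I would derive the relations (\ref{relation in relative modified 1})--(\ref{relation in relative modified 5}) one at a time by inserting the appropriate powers of $\langle-,-\rangle_r$ into the corresponding relations of Proposition~\ref{proposition of relations in twist MH(A)}, reading these powers off from Proposition~\ref{proposition relative Euler form}. The toral relations (\ref{relation in relative modified 1}) and (\ref{relation in relative modified 2}) are immediate from (\ref{relative euler form 4}) and (\ref{relative euler form 2}); for $U_{A,n}\circ U_{B,n}$ one combines $\langle U_{A,n},U_{B,n}\rangle_r=\langle\widehat{A},\widehat{B}\rangle^{-1/2}$ with the $*$-relation for $U_{A,n}*U_{B,n}$ to obtain (\ref{relation in relative modified 3}); and (\ref{relation in relative modified 5}) is obtained from (\ref{the quasi-commutative of twisted modified hall}) by multiplying through by $\langle U_{B,n},U_{A,m}\rangle_r/\langle U_{A,m},U_{B,n}\rangle_r$ and simplifying with the identity $(\widehat{A},\widehat{B})=\langle\widehat{A},\widehat{B}\rangle\langle\widehat{B},\widehat{A}\rangle$. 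The one computation needing genuine care is (\ref{relation in relative modified 4}): starting from the $*$-relation for $U_{B,n}*U_{A,n+1}$ in Proposition~\ref{proposition of relations in twist MH(A)}, the factor $\langle U_{B,n},U_{A,n+1}\rangle_r=\langle\widehat{B},\widehat{A}\rangle$ cancels the prefactor $\langle\widehat{B},\widehat{A}\rangle^{-1}$ appearing there, after which one rewrites the triple product $U_{N,n+1}*U_{M,n}*K_{\widehat{B}-\widehat{M},n+1}$ as $U_{N,n+1}\circ U_{M,n}\circ K_{\widehat{B}-\widehat{M},n+1}$ at the cost of the $U$--$K$ correction factors from (\ref{relative euler form 3}), and checks that their product is exactly $\sqrt{\langle\widehat{M}-\widehat{N},\widehat{M}-\widehat{B}\rangle}$ (using $\langle\widehat{N},\beta\rangle\langle\widehat{M},\beta\rangle^{-1}=\langle\widehat{N}-\widehat{M},\beta\rangle$ with $\beta=\widehat{B}-\widehat{M}$, and that $\langle\widehat{M}-\widehat{N},\widehat{M}-\widehat{B}\rangle=\langle\widehat{N}-\widehat{M},\widehat{B}-\widehat{M}\rangle$).

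Finally I would argue that (\ref{relation in relative modified 1})--(\ref{relation in relative modified 5}) are a complete set of defining relations, not merely relations that hold, by repeating the argument used for the presentation of $\cm\ch(\ca)$ in \cite{LinP}. Let $R$ be the abstract associative unital algebra presented by these generators and relations; the previous step gives a surjection $R\twoheadrightarrow\cm\ch_{rtw}(\ca)$, and the relations (\ref{relation in relative modified 1})--(\ref{relation in relative modified 5}) suffice to move every $K$ to the left, order the $K$'s, and order the $U$'s by descending degree (with (\ref{relation in relative modified 4}) generating $K$'s that are then moved left), so that every word in the generators of $R$ is a $\mathbb{Q}(v)$-combination of the ordered monomials of Proposition~\ref{proposition basis of modified}. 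Hence $\dim_{\mathbb{Q}(v)}R\le\dim_{\mathbb{Q}(v)}\cm\ch_{rtw}(\ca)$, and the surjection is an isomorphism. I expect the main (still routine) obstacle to be the bookkeeping of the half-integer exponents and signs in (\ref{relation in relative modified 4}) and in the reordering moves; everything else is a transparent specialization of Propositions~\ref{proposition of relations in twist MH(A)} and~\ref{proposition relative Euler form}.
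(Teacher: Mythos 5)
Your proposal is correct and follows essentially the same route as the paper: twist the presentation of $\cm\ch_{tw}(\ca)$ from Proposition \ref{proposition of relations in twist MH(A)} by the relative Euler form and read off the correction factors from Proposition \ref{proposition relative Euler form}, with your key computations for (\ref{relation in relative modified 4}) and (\ref{relation in relative modified 5}) matching the paper's. Your closing argument that the relations are actually defining (via the surjection from the abstract algebra and reduction to the ordered monomials of Proposition \ref{proposition basis of modified}) is a point the paper leaves implicit, but it is the standard completion of the same argument, not a different approach.
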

\begin{proof}
The relation (\ref{relation in relative modified 1}) is a consequence of the commutativity of quantum torus of acyclic complexes in the twisted modified Ringel-Hall algebra and the identity (\ref{relative euler form 4}).
The relation (\ref{relation in relative modified 3}) is a direct consequence of Definition \ref{definition relative Euler form}.

For the relation (\ref{relation in relative modified 2}), we have
\begin{eqnarray*}
K_{\alpha,m}\circ U_{A,n}&=&\langle K_{\alpha,m}, U_{A,n}\rangle_r K_{\alpha,m}*U_{A,n}\\
&=&\sqrt{\langle \alpha, \widehat{A}\rangle^{(-1)^{n-m}}} U_{A,n}*K_{\alpha,m}\\
&=&\sqrt{\langle \alpha, \widehat{A}\rangle^{(-1)^{n-m}}}\frac{1}{\langle U_{A,n},  K_{\alpha,m}\rangle_r}U_{A,n}\circ K_{\alpha,m}\\
&=&\sqrt{(\alpha, \widehat{A})^{(-1)^{n-m}}}U_{A,n}\circ K_{\alpha,m}
\end{eqnarray*}
for any $m, n\in\mathbb{Z}$.

And for the relation (\ref{relation in relative modified 4}), we have
\begin{eqnarray*}
&&U_{B,n}\circ U_{A,n+1}\\
&=&\langle U_{B,n}, U_{A,n+1}\rangle_{r}U_{B,n}* U_{A,n+1}\\
&=&\sum\limits_{M,N\in\Iso(\ca)}\gamma_{AB}^{MN}\frac{a_Aa_B}{a_Ma_N} U_{N,n+1}*U_{M,n}*K_{\widehat{B}-\widehat{M},n+1}\\
&=&\sum\limits_{M,N\in\Iso(\ca)}\gamma_{AB}^{MN}\frac{a_Aa_B}{a_Ma_N}\frac{1}{\langle U_{N,n+1}, U_{M,n}\rangle_r\langle U_{M,n},K_{\widehat{B}-\widehat{M},n+1}\rangle_r\langle U_{N,n+1},K_{\widehat{B}-\widehat{M},n+1}\rangle_r}\\
&&U_{N,n+1}\circ U_{M,n}\circ K_{\widehat{B}-\widehat{M},n+1}\\
&=&\sum\limits_{M,N\in\Iso(\ca)}\gamma_{AB}^{MN}\frac{a_Aa_B}{a_Ma_N}
\sqrt{\langle\widehat{M}-\widehat{N},\widehat{M}-\widehat{B} \rangle}U_{N,n+1}\circ U_{M,n}\circ K_{\widehat{B}-\widehat{M},n+1}.
\end{eqnarray*}

For any $m\geq n+2$, we have
\begin{eqnarray*}
&&U_{B,n}\circ U_{A,m}\\
&=&\langle U_{B,n}, U_{A,m}\rangle_r U_{B,n}*U_{A,m}\\
&=&\sqrt{\langle \widehat{B},\widehat{A}\rangle^{(-1)^{m-n+1}(m-n+1)}}\langle \widehat{B} , \widehat{A}\rangle^{(-1)^{m-n}}U_{A,m}*U_{B,n}\\
&=&\frac{\sqrt{\langle \widehat{B},\widehat{A}\rangle^{(-1)^{m-n+1}(m-n+1)}}\langle \widehat{B} , \widehat{A}\rangle^{(-1)^{m-n}}}{\sqrt{\langle \widehat{A} , \widehat{B}\rangle^{(-1)^{n-m+1}(n-m+1)}}}U_{A,m}\circ U_{B,n}\\
&=&\sqrt{(\widehat{A},\widehat{B})^{(-1)^{n-m}(n-m+1)}}U_{A,m}\circ U_{B,n}.
\end{eqnarray*}
\end{proof}

\begin{theorem}\label{theorem epimorphism from modified to lattice}
(1) There is an epimorphism $\varphi:\cm\ch_{rtw}(\ca)\rightarrow\cl_*(\ca)$, given by $$K_{\alpha, n}\mapsto K^{(-1)^n}_{\alpha}, U_{A,n}\mapsto Z^{(n)}_{A}$$ for any $A\in\Iso(\ca)$, $n\in\mathbb{Z}$ and $\alpha\in K_0(\ca)$.

(2) $\cl_*(\ca)\cong \cm\ch_{rtw}(\ca)/I$, where $I$ is the ideal of $\cm\ch_{rtw}(\ca)$ generated by the set $$\{K_{\alpha, n+1}K_{\beta, n}-K_{\alpha-\beta, n+1}|\alpha, \beta\in K_{0}(\ca), n\in\mathbb{Z}\}.$$
\end{theorem}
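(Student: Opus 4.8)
The plan is to work entirely with the presentation of $\cm\ch_{rtw}(\ca)$ from Proposition~\ref{proposition relative twisted modified}, the presentation of $\cl_*(\ca)$ from Proposition~\ref{proposition lattice algebra}, and the two $\mathbb{Q}$-bases recorded in Proposition~\ref{proposition basis of modified} and Proposition~\ref{proposition basis of lattice algebra }.

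\emph{Part (1).} First I would check that the assignment $U_{A,n}\mapsto Z_A^{(n)}$, $K_{\alpha,n}\mapsto K_\alpha^{(-1)^n}$ turns each defining relation (\ref{relation in relative modified 1})--(\ref{relation in relative modified 5}) of $\cm\ch_{rtw}(\ca)$ into a valid identity of $\cl_*(\ca)$. Relations (\ref{relation in relative modified 3}), (\ref{relation in relative modified 4}), (\ref{relation in relative modified 5}) are transported verbatim, noting that $K_{\widehat{B}-\widehat{M},n+1}$ is sent to $K_{\widehat{B}-\widehat{M}}^{(-1)^{n+1}}$; for (\ref{relation in relative modified 1}) one uses that all $K_\alpha$ commute in $\cl_*(\ca)$ and that $(K_\alpha K_\beta)^{\pm1}=K_{\alpha+\beta}^{\pm1}$; and for (\ref{relation in relative modified 2}) one uses the symmetry of the form $(\,,\,)$, the equality $(-1)^{n-m}=(-1)^{n+m}$, and the fact that raising $K_\alpha Z_A^{(n)}=\sqrt{(\widehat{A},\alpha)^{(-1)^n}}Z_A^{(n)}K_\alpha$ to the $(-1)^m$-th power produces exactly the required relation. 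Compatibility with the convention $K_{\alpha,n}=K_{\widehat{A_1},n}\circ K_{\widehat{A_2},n}^{-1}$ for $\alpha=\widehat{A_1}-\widehat{A_2}$ is then automatic. By the presentation this yields an algebra homomorphism $\varphi$, and it is onto because its image contains every $Z_A^{(n)}$ together with every $K_\alpha=\varphi(K_{\alpha,0})$, and these generate $\cl_*(\ca)$.

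\emph{Part (2).} The inclusion $I\subseteq\ker\varphi$ is immediate: since $(-1)^{n+1}=-(-1)^n$,
$$\varphi\bigl(K_{\alpha,n+1}\circ K_{\beta,n}\bigr)=K_\alpha^{-(-1)^n}K_\beta^{(-1)^n}=K_{\beta-\alpha}^{(-1)^n}=K_{\alpha-\beta}^{(-1)^{n+1}}=\varphi\bigl(K_{\alpha-\beta,n+1}\bigr),$$
so all generators of $I$ lie in $\ker\varphi$, and $\varphi$ factors through a still surjective homomorphism $\bar\varphi\colon\cm\ch_{rtw}(\ca)/I\to\cl_*(\ca)$. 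It remains to see that $\bar\varphi$ is injective, i.e. $\ker\varphi\subseteq I$. Specializing the generators of $I$ at $\alpha=0$ and at $\beta=0$ gives, in the quotient, $K_{0,n}=1$ and $K_{\beta,n}\equiv K_{-\beta,n+1}$; combining this with the commutativity relation (\ref{relation in relative modified 1}) and with $K_{\gamma,n}\circ K_{\delta,n}=K_{\gamma+\delta,n}$, the $K$-prefix $K_{\alpha_{r-1},r}\circ\cdots\circ K_{\alpha_l,l+1}$ of a basis element from Proposition~\ref{proposition basis of modified} (after rewriting the $\diamond$-products as $\circ$-products, which only rescales each basis element) can be pushed to a single level and collapses modulo $I$ to one element $\overline{K}_\gamma$, the image of $K_{\gamma,0}$, where $\gamma$ is the appropriate alternating sum of the $\alpha_i$. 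Hence $\cm\ch_{rtw}(\ca)/I$ is spanned by the elements $\overline{K}_\gamma\circ \overline{U}_{A_r,r}\circ \overline{U}_{A_{r-1},r-1}\circ\cdots\circ \overline{U}_{A_l,l}$ with $\gamma\in K_0(\ca)$, $l\le r$, and $A_i\in\Iso(\ca)$, where $\overline{(\cdot)}$ denotes the image in the quotient.

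\emph{Conclusion.} Under $\bar\varphi$ such a spanning element goes to $K_\gamma Z_{A_r}^{(r)}Z_{A_{r-1}}^{(r-1)}\cdots Z_{A_l}^{(l)}$, and, with the same parametrization by $\gamma$ and the $A_i$, these are exactly the basis elements of $\cl_*(\ca)$ listed in Proposition~\ref{proposition basis of lattice algebra }. Thus $\bar\varphi$ maps a spanning set of $\cm\ch_{rtw}(\ca)/I$ bijectively onto a basis of $\cl_*(\ca)$; consequently that spanning set is itself a basis and $\bar\varphi$ is an isomorphism, whence $\ker\varphi=I$. I expect the genuine difficulty to lie in this second half: establishing that $I$ is \emph{large enough}, i.e. that dividing out by the relations $K_{\alpha,n+1}K_{\beta,n}=K_{\alpha-\beta,n+1}$ collapses the $\mathbb{Z}$-grading on the quantum-torus part in exactly the way $\varphi$ does and imposes nothing further, which comes down to the parity bookkeeping when the $K$'s are pushed to a single level and to the careful matching of the two bases.
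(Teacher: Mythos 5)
Your proposal is correct, and Part (1) proceeds exactly as in the paper: one verifies the defining relations of Proposition~\ref{proposition relative twisted modified} land on those of Proposition~\ref{proposition lattice algebra}, with only relations (\ref{relation in relative modified 1})--(\ref{relation in relative modified 2}) requiring the parity/symmetry bookkeeping you describe. For Part (2) your route differs from the paper's in its logical organization, though both rest on the same three ingredients (the basis of Proposition~\ref{proposition basis of modified}, the basis of Proposition~\ref{proposition basis of lattice algebra }, and the observation that the generators of $I$ specialized at $\alpha=0$ give $K_{\beta,n}\equiv K_{-\beta,n+1}$, so the $K$-prefix collapses to a single level modulo $I$). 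You run the argument forward: exhibit a spanning set of $\cm\ch_{rtw}(\ca)/I$ that $\bar\varphi$ carries bijectively onto the basis of $\cl_*(\ca)$, which forces $\bar\varphi$ to be an isomorphism and hence $\ker\varphi=I$. The paper instead argues backward from an arbitrary element of $\ker\varphi$: it expands in the basis, groups terms by their $U$-part, uses the basis of $\cl_*(\ca)$ to isolate kernel combinations of pure $K$-prefixes, and then shows by a telescoping identity and induction on the length of the prefix that any such combination lies in $I$. Your version is the more standard and, in my view, cleaner argument -- it avoids the paper's double induction and makes transparent exactly where the two bases are matched -- while the paper's version gives slightly more explicit information about how a given kernel element is written in terms of the generators of $I$. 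Both are complete; the one point worth making explicit in your write-up is that distinct parameters $(\gamma,A_r,\dots,A_l)$ do index distinct elements of the basis in Proposition~\ref{proposition basis of lattice algebra } (up to the harmless padding by zero objects, which occurs identically on both sides), since the bijectivity of the parametrization is what upgrades "spanning set onto linearly independent set" to "isomorphism".
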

\begin{proof}
(1) We just need to prove that $\varphi$ is a homomorphism. According to Proposition \ref{proposition lattice algebra} and Proposition \ref{proposition relative twisted modified}, it suffices to prove that the relations (\ref{relation in relative modified 1}) and (\ref{relation in relative modified 2}) are satisfied.

For any $\alpha, \beta\in K_0(\ca)$ and $n, m\in\mathbb{Z}$, we have
\begin{eqnarray*}
\varphi(K_{\alpha,n})\varphi(K_{\beta,m})&=&K^{(-1)^n}_{\alpha}K^{(-1)^m}_{\beta}\\
&=&K^{(-1)^m}_{\beta}K^{(-1)^n}_{\alpha}\\
&=&\varphi(K_{\beta,m})\varphi(K_{\alpha,n}).
\end{eqnarray*}
In particular, for $m=n$ we have
\begin{eqnarray*}
\varphi(K_{\alpha,n})\varphi(K_{\beta,n})&=&K^{(-1)^n}_{\alpha}K^{(-1)^n}_{\beta}\\
&=&K^{(-1)^n}_{\alpha+\beta}\\
&=&\varphi(K_{\alpha+\beta,n}).
\end{eqnarray*}

On the other hand, for any $A\in\Iso(\ca)$, we have
\begin{eqnarray*}
\varphi(K_{\alpha,m})\varphi(U_{A,n})&=&K_{\alpha}^{(-1)^m}Z_A^{(n)}\\
&=&\sqrt{((-1)^m\alpha, \widehat{A})^{(-1)^n}}Z_A^{(n)}K_{\alpha}^{(-1)^m}\\
&=&\sqrt{(\alpha,\widehat{A})^{(-1)^{n-m}}}Z_A^{(n)}K_{\alpha}^{(-1)^m}\\
&=&\varphi(U_{A,n})\varphi(K_{\alpha,m}).
\end{eqnarray*}

(2) It is equivalent to prove that $\Ker(\varphi)=I$, and it is obviously that $I\subseteq\Ker(\varphi)$.
One can easily see that the image of the basis described in Proposition \ref{proposition basis of modified} is the generators of the lattice algebra $\cl_*(\ca)$.

We claim that if $$K_{\alpha_{r-1},r}\circ\cdots\circ K_{\alpha_l,l+1}\circ U_{A^r,r}\circ\cdots\circ U_{A^l,l}-K_{\beta_{r-1},r}\circ\cdots\circ K_{\beta_l,l+1}\circ U_{A^r,r}\circ\cdots\circ U_{A^l,l}\in\Ker(\varphi)$$ for any two elements $$K_{\alpha_{r-1},r}\circ\cdots\circ K_{\alpha_l,l+1}\circ U_{A^r,r}\circ\cdots\circ U_{A^l,l}$$ and $$K_{\beta_{r-1},r}\circ\cdots\circ K_{\beta_l,l+1}\circ U_{A^r,r}\circ\cdots\circ U_{A^l,l}$$ in the basis of $\cm\ch_{rtw}(\ca)$, then $$K_{\alpha_{r-1},r}\circ\cdots\circ K_{\alpha_l,l+1}\circ U_{A^r,r}\circ\cdots\circ U_{A^l,l}-K_{\beta_{r-1},r}\circ\cdots\circ K_{\beta_l,l+1}\circ U_{A^r,r}\circ\cdots\circ U_{A^l,l}\in I.$$
It is clear that they must satisfy the following condition
$$(-1)^{r}\alpha_{r-1}+\cdots+(-1)^{l+1}\alpha_l=(-1)^{r}\beta_{r-1}+\cdots+(-1)^{l+1}\beta_l.$$
And it is induced by the following identity
\begin{eqnarray*}
&&K_{\alpha_{r-1},r}\circ\cdots\circ K_{\alpha_l,l+1}-K_{\beta_{r-1},r}\circ\cdots\circ K_{\beta_l,l+1}\\
&=&K_{\alpha_{r-1},r}K_{\alpha_{r-2},r-1}\cdots K_{\alpha_{l+2},l+3}(K_{\alpha_{l+1},l+2}K_{\alpha_{l},l+1}-K_{\alpha_{l+1}-\alpha_{l},l+2})\\
&-&K_{\beta_{r-1},r}K_{\beta_{r-2},r-1}\cdots K_{\beta_{l+2},l+3}(K_{\beta_{l+1},l+2}K_{\beta_{l},l+1}-K_{\beta_{l+1}-\beta_{l},l+2})\\
&+&K_{\alpha_{r-1},r}K_{\alpha_{r-2},r-1}\cdots K_{\alpha_{l+2},l+3}K_{\alpha_{l+1}-\alpha_{l},l+2}-K_{\beta_{r-1},r}K_{\beta_{r-2},r-1}\cdots K_{\beta_{l+2},l+3}K_{\beta_{l+1}-\beta_{l},l+2}
\end{eqnarray*}
that $$K_{\alpha_{r-1},r}K_{\alpha_{r-2},r-1}\cdots K_{\alpha_{l+2},l+3}K_{\alpha_{l+1}-\alpha_{l},l+2}-K_{\beta_{r-1},r}K_{\beta_{r-2},r-2}\cdots K_{\beta_{l+2},l+3}K_{\beta_{l+1}-\beta_{l},l+2}\in\Ker(\varphi).$$
Thus one can get that
$$K_{\alpha_{r-1},r}\circ\cdots\circ K_{\alpha_l,l+1}-K_{\beta_{r-1},r}\circ\cdots\circ K_{\beta_l,l+1}\in I$$
by induction on $r-l$.
Hence
$$K_{\alpha_{r-1},r}\circ\cdots\circ K_{\alpha_l,l+1}\circ U_{A^r,r}\circ\cdots\circ U_{A^l,l}-K_{\beta_{r-1},r}\circ\cdots\circ K_{\beta_l,l+1}\circ U_{A^r,r}\circ\cdots\circ U_{A^l,l}\in I.$$

For any
$$x=\sum\limits_{(\alpha_{r-1},\cdots,\alpha_l,A^r,\cdots,A^l)}a_{\alpha_{r-1},\cdots,\alpha_l,A^r,\cdots,A^l}K_{\alpha_{r-1},r}\circ\cdots\circ K_{\alpha_l,l+1}\circ U_{A^r,r}\circ\cdots\circ U_{A^l,l}\in\Ker(\varphi),$$
it is clear that $$x=\sum\limits_{(A^r,\cdots,A^l)}\left(\sum\limits_{(\alpha_{r-1},\cdots,\alpha_l)}a_{\alpha_{r-1},\cdots,\alpha_l,A^r,\cdots,A^l}K_{\alpha_{r-1},r}\circ\cdots\circ K_{\alpha_l,l+1}\right)\circ U_{A^r,r}\circ\cdots\circ U_{A^l,l}.$$

And it follows from the basis of $\cl_*(\ca)$ given in Proposition~\ref{proposition basis of lattice algebra } and the definition of~$\varphi$ that for any ~$A^r,\cdots,A^l\in\Iso(\ca)$ we can get that
$$\sum\limits_{(\alpha_{r-1},\cdots,\alpha_l)}a_{\alpha_{r-1},\cdots,\alpha_l,A^r,\cdots,A^l}K_{\alpha_{r-1},r}\circ\cdots\circ K_{\alpha_l,l+1}\in\Ker(\varphi).$$
In addition we have
\begin{eqnarray*}
&&\sum\limits_{(\alpha_{r-1},\cdots,\alpha_l)}a_{\alpha_{r-1},\cdots,\alpha_l,A^r,\cdots,A^l}K_{\alpha_{r-1},r}\circ\cdots\circ K_{\alpha_l,l+1}\\
&=&\sum\limits_{(\alpha_{r-1},\cdots,\alpha_l)}a_{\alpha_{r-1},\cdots,\alpha_l,A^r,\cdots,A^l}\left(K_{\alpha_{r-1},r}\circ\cdots\circ K_{\alpha_l,l+1}-K_{\alpha_{r-1},r}\circ\cdots\circ K_{\alpha_{l+1}-\alpha_l,l+2}\right)\\
&+&\sum\limits_{(\alpha_{r-1},\cdots,\alpha_l)}a_{\alpha_{r-1},\cdots,\alpha_l,A^r,\cdots,A^l}K_{\alpha_{r-1},r}\circ\cdots\circ K_{\alpha_{l+1}-\alpha_l,l+2},
\end{eqnarray*}
so by above claim and using induction on the number of $K_{\alpha_i}$, one can obtain that
$$\sum\limits_{(\alpha_{r-1},\cdots,\alpha_l)}a_{\alpha_{r-1},\cdots,\alpha_l,A^r,\cdots,A^l}K_{\alpha_{r-1},r}\circ\cdots\circ K_{\alpha_l,l+1}\in I,$$
which implies ~$\Ker(\varphi)\subseteq I$.
\end{proof}

\begin{remark}
It is inferred from Theorem \ref{theorem of isomorphism of naive lattice algebras and modified hall algebras} and Theorem \ref{theorem epimorphism from modified to lattice} that in some twisted case the lattice algebra is in fact the quotient algebra of the naive lattice algebra.
\end{remark}
\section{The invariance of naive lattice algebras}\label{the derived invariance}
In this section, we apply Theorem~\ref{theorem of isomorphism of naive lattice algebras and modified hall algebras} to prove that the naive lattice algebra is invariant under derived equivalences. In particular, it suffices to show that the componentwise twisted modified Ringel-Hall algebra is invariant under derived equivalences.

We begin with the following result concerning  derived Hall algebras.

\begin{proposition}\label{direct sum to multiplication}
For any object $[A]=[\bigoplus A^{-i}[i]]$ in $\Iso(D^b(\ca))$, we have
$$[A]=\sqrt{\prod_{i<j}\langle \widehat{A^j}, \widehat{A^i}\rangle^{(-1)^{j-i}}}\prod^{\leftarrow}_{n}Z_{A^{n}}^{[n]}$$ in $\cd\ch_{tw}(\ca)$, which is called the normal form of $[A]$  and  in the following we simply denote it by $Z(A)$.
\end{proposition}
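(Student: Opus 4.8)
The plan is to prove the normal form by induction on the number $r$ of nonzero components of $A$, using repeatedly the twisted multiplication relations in $\cd\ch_{tw}(\ca)$ coming from the derived Hall product. First I would fix the convention that, for a bounded complex $A$ with components $A^{-i}$ placed in cohomological degree $i$ (so $A^n$ sits in degree $n$ after reindexing), the objects $A^n[\,-n\,]$ — equivalently the stalk complexes $Z_{A^n}^{[n]}$ — are pairwise ``Hom-orthogonal in high degree'' in the sense that $\Hom_{D^b(\ca)}(A^j[-j],A^i[-i][\ell])$ can be nonzero only for $\ell\in\{0,1\}$ when $j-i=1$, and only for $\ell=0$ when $j=i$, because $\ca$ is hereditary. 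The base case $r\le 1$ is trivial: $Z(A)=Z_{A^n}^{[n]}$ and the prefactor is an empty product equal to $1$.

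For the inductive step I would peel off the lowest (or highest) nonzero component. Write $A\cong A'\oplus A^{m}[\,-m\,]$ in $D^b(\ca)$ where $A^m$ is the component of least degree $m$ and $A'=\bigoplus_{i}A^{-i}[i]$ runs over the remaining components (all in degrees $>m$). Since the extension $A^m[-m]\rightarrowtail A\twoheadrightarrow A'$ is split, the derived Hall product $[A']\triangle[A^m[-m]]$, expanded via the structure constant with $\Ext^1_{D^b(\ca)}(A',A^m[-m])_{L}$, picks out among its summands the term $L=A$ with multiplicity controlled by the automorphism groups; all other contributions vanish because any nonsplit extension would have to mix components in non-adjacent or equal degrees, which the hereditary hypothesis forbids (here is where I would invoke the quasi-commutation relations $Z_B^{[n]}\triangle Z_A^{[m]}=\langle\widehat{A},\widehat{B}\rangle_t^{(-1)^{m-n}}Z_A^{[m]}\triangle Z_B^{[n]}$ for $m>n+1$, together with the degree-$(n,n+1)$ relation, to see that the split term dominates and to track the scalar). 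Thus $[A'] \triangle [A^m[-m]]$ equals $[A]$ up to the twist scalar $\langle\widehat{A'},\widehat{A^m}\rangle_t$ introduced by passing from $[A']\,[A^m[-m]]$ to $[A']\triangle[A^m[-m]]$, which by bilinearity of the Euler form is $\prod_{i>m}\langle\widehat{A^i},\widehat{A^m}\rangle_t = \sqrt{\prod_{i>m}\langle\widehat{A^i},\widehat{A^m}\rangle^{(-1)^{i-m}}}$ since $\langle\widehat{A^i},\widehat{A^m}\rangle_t=\sqrt{\langle\widehat{A^i},\widehat{A^m}\rangle}$ when $i-m$ is even, and the odd-degree Hom's contribute the sign $(-1)^{i-m}$. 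Applying the induction hypothesis to $A'$ and multiplying by $Z_{A^m}^{[m]}=Z(A^m[-m])$ on the right then assembles the ordered product $\prod^{\leftarrow}_{n}Z_{A^n}^{[n]}$, and the prefactors combine: the new cross-terms $\langle\widehat{A^i},\widehat{A^m}\rangle$ with $i>m$ are exactly the ones missing from the inductive prefactor $\sqrt{\prod_{m<i<j}\langle\widehat{A^j},\widehat{A^i}\rangle^{(-1)^{j-i}}}$, completing the product over all $i<j$.

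The main obstacle I anticipate is the bookkeeping of the scalar: verifying that the single split term in the Hall product $[A']\triangle[A^m[-m]]$ really does produce $[A]$ with the claimed coefficient, i.e.\ that the factor $\frac{a_A}{a_{A'}a_{A^m}}$ from the Riedtmann--Peng-type formula for $g$ in the triangulated setting cancels correctly and that there are genuinely no other surviving $L$'s. This is where the hereditary hypothesis on $\ca$ and the resulting vanishing $\Hom_{D^b(\ca)}(A^j[-j],A^i[-i][\ell])=0$ for $\ell\ge 2$ (and for $\ell=1$ unless $j-i=1$) is essential — it guarantees that no ``higher'' gluing of the stalk pieces occurs, so the only complex obtainable as a middle term of an extension of $A'$ by $A^m[-m]$ in the relevant cone is the split one $A$ itself. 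Once this vanishing is in hand, the computation is a routine manipulation of the Euler bilinear form, and the sign $(-1)^{j-i}$ in the exponent is forced by the alternating product defining $\langle-,-\rangle_t$. I would also remark that the analogous normal form in $\cm\ch_{ctw}(\ca)$ and $\cm\ch_{rtw}(\ca)$ follows by the same argument with the componentwise, respectively relative, Euler form replacing $\langle-,-\rangle_t$.
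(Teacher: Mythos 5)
Your argument is essentially the paper's proof: the same induction on the number of nonzero components, except that you peel off the lowest-degree piece $A^m[-m]$ and multiply it on the right, whereas the paper peels off the highest-degree piece $A^r$ and multiplies $Z_{A^r}^{[r]}$ on the left; both orders make the relevant $\Ext^1$ in $D^b(\ca)$ vanish, and the prefactor arises in both cases as the ratio of the twist $\langle-,-\rangle_t$ to the alternating Hom-denominator of the Kontsevich--Soibelman product. One correction is needed, though: your stated Hom-vanishing pattern has the degree shifts reversed. With $Z_A^{[n]}=A[-n]$ one has $\Hom_{D^b(\ca)}(A^j[-j],A^i[-i][\ell])=\Ext^{j-i+\ell}_{\ca}(A^j,A^i)$, which vanishes unless $j-i+\ell\in\{0,1\}$; so for $\ell=1$ this space can be nonzero precisely when $j-i\in\{-1,0\}$, not when $j-i=1$, and for $j=i$ it can be nonzero for $\ell=1$ as well as $\ell=0$. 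As literally written, your lemma would leave open a nonsplit extension of $A'$ by $A^m[-m]$ between adjacent degrees; the corrected statement closes this, since every summand of $\Ext^1_{D^b(\ca)}(A',A^m[-m])$ is $\Ext^{i-m+1}_{\ca}(A^i,A^m)$ with $i-m+1\geq 2$, hence zero by heredity. Finally, in the normalization used here the split class contributes with coefficient $1$ over $\prod_{p\geq 0}|\Hom(A'[p],A^m[-m])|^{(-1)^p}$ (no $a_A/(a_{A'}a_{A^m})$ factor occurs), so the bookkeeping you flagged as the main obstacle reduces to exactly the computation displayed in the paper.
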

\begin{proof}
For any object $[A]=[\bigoplus A^{-i}[i]$] of $\Iso(D^b(\ca))$. Assume that $A$ is of the following form $$\cdots \rightarrow0 \rightarrow A^l \xrightarrow{0} \cdots \xrightarrow{0} A^r\rightarrow 0\rightarrow\cdots,$$where $A^l$ is the leftmost nonzero component and $A^r$ is the rightmost nonzero component, then the \emph{width} of $A$ is defined to be $r-l+1$. If $A$ is zero, then the width of $A$ is defined to be $0$.  For simplicity set $l=0$, and inductively we have
\begin{eqnarray*}
\bigoplus^{r}_{i=0}Z_{A^i}^{[i]}&=&\frac{\prod\limits_{p\geq 0}|\Hom_{D^b(\ca)}(A^r[p],\bigoplus\limits^{r-1}_{i=0}A^i[r-i])|^{(-1)^p}}{\sqrt{\prod\limits_{0\leq i<r}\langle \widehat{A^r}, \widehat{A^i}\rangle^{(-1)^{r-i}}}}Z_{A^r}^{[r]} \vartriangle (\bigoplus\limits^{r-1}_{i=0}Z_{A^i}^{[i]})\\
 &=&\sqrt{\prod_{0\leq i<r}\langle \widehat{ A^r}, \widehat{A^i}\rangle^{(-1)^{r-i}}}Z_{A^r}^{[r]} \vartriangle  (\bigoplus\limits^{r-1}_{i=0}Z_{A^i}^{[i]})\\
  &=&\sqrt{\prod_{0\leq i<j\leq r}\langle \widehat{A^j}, \widehat{A^i}\rangle^{(-1)^{j-i}}}\prod^{\leftarrow}_{n}Z_{A^{n}}^{[n]}
\end{eqnarray*}
\end{proof}

It is clear that there is an automorphism $T$ of $\cd\ch_{tw}(\ca)$ by setting $T(Z_{A}^{[n]})=Z_A^{[n+1]}$.
\begin{remark}\label{remark derived invariance of derived hall algebras}
Let $\ca$ and $\cb$ be two hereditary abelian categories. If there exists a derived equivalence $F: D^b(\ca)\rightarrow D^b(\cb)$, then $\cd\ch_{tw}(\ca)$ is isomorphic to $\cd\ch_{tw}(\cb)$ by setting $$Z_A^{[n]}\mapsto T^{n}(Z(F(A))).$$
\end{remark}

By applying the method in~\cite{LinP}, it is not hard to see that
\begin{theorem}\label{Theorem embedding}
There is an embedding $\iota:\cd\ch_{tw}(\ca)\rightarrow \cm\ch_{ctw}(\ca)$, defined by
$$Z_A^{[0]}\mapsto U_{A,0},$$
$$Z_A^{[n]}\mapsto \frac{1}{\sqrt{\langle \widehat{A}, \widehat{A}\rangle^n}} U_{A,n}\star K_{-\widehat{A},n}\star K_{\widehat{A},n-1}\star \cdots \star K_{(-1)^{n-1}\widehat{A},2}\star K_{(-1)^{n}\widehat{A},1},$$
and $\ Z_A^{[-n]}\mapsto \sqrt{\langle \widehat{A}, \widehat{A}\rangle^n}U_{A,-n}\star K_{-\widehat{A},-n+1}\star K_{\widehat{A},-n+2}\star \cdots \star K_{(-1)^{n-1}\widehat{A},-1}\star K_{(-1)^{n}\widehat{A},0}$ for any $n>0$.
\end{theorem}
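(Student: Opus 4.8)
The plan is to prove the statement in two stages: first that $\iota$ extends to a well-defined algebra homomorphism $\cd\ch_{tw}(\ca)\rightarrow\cm\ch_{ctw}(\ca)$, and then that this homomorphism is injective. Since $\cd\ch_{tw}(\ca)$ is presented by the generators $Z_A^{[n]}$ subject to the three families of relations (\ref{relation twist tri ZnZn}), (\ref{relation twist tri ZnZn+1}) and (\ref{relation twist tri ZnZm}) (the defining relations of $\cd\ch^e_{tw}(\ca)$ in Definition \ref{definition SX} once the generators $K_\alpha$ and the first displayed line of relations are discarded), it suffices to check that the elements $\iota(Z_A^{[n]})$ of $\cm\ch_{ctw}(\ca)$ satisfy exactly these relations. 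Writing $\iota(Z_A^{[n]})=\langle\widehat A,\widehat A\rangle^{-n/2}\,U_{A,n}\star T_{A,n}$, where $T_{A,n}$ is the displayed ordered product of the $K$-factors $K_{\pm\widehat A,j}$ (with $T_{A,0}=[0]$), the verification is a direct computation with Proposition \ref{proposition twisted multiplication in modified hall algebras}: one first moves all $K$-factors to the left of all $U$-factors --- which, by the relations of that proposition relating $K_{\alpha,j}$ with $U_{A,i}$, only introduces scalars of the form $\sqrt{(\alpha,\widehat B)^{\pm1}}$ --- and then merges and reorders the resulting $K$-tails by means of (\ref{relation of same lattice in twisted modified 3}), (\ref{relation in twisted modified 6}) and (\ref{relation of not-adjacent in twisted modified 10}). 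In the equal-degree case, after pushing the $K$-tail of $\iota(Z_A^{[n]})$ past $U_{B,n}$ and merging it with that of $\iota(Z_B^{[n]})$, relation (\ref{relation twist tri ZnZn}) reduces to (\ref{relation of same lattice in twisted modified 1}) together with the fact that $T_{A,n}\star T_{B,n}$ is a scalar multiple of $T_{C,n}$ whenever $\widehat C=\widehat A+\widehat B$, all scalars cancelling; in the adjacent case one reduces (\ref{relation twist tri ZnZn+1}) to (\ref{relation of not-adjacent in twisted modified 7}), the key point being that $K_{\widehat B-\widehat M,n+1}\star T_{B,n}\star T_{A,n+1}$ equals $T_{N,n+1}\star T_{M,n}$ up to the prescribed scalar, using the relation $\widehat A-\widehat N=\widehat B-\widehat M$ valid whenever $\gamma_{AB}^{MN}\neq 0$; and in the far-apart case $|m-n|\ge 2$ the commutativity relations (\ref{relation of not-adjacent in twisted modified 8})--(\ref{relation of not-adjacent in twisted modified 10}), together with the alternating exponents $(-1)^j$ built into $T_{A,m}$ and $T_{B,n}$, produce exactly the quasi-commutation factor $\sqrt{(\widehat A,\widehat B)^{(-1)^{m-n}}}$ appearing in (\ref{relation twist tri ZnZm}). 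One also checks that the three branches of the formula for $\iota$ are compatible on the relations linking degrees $0$ and $\pm 1$; this is bookkeeping of the same kind.

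For injectivity, recall from Proposition \ref{direct sum to multiplication} that $\cd\ch_{tw}(\ca)$ has a basis consisting of the normal forms $Z(A)$, $A\in\Iso(D^b(\ca))$, each of which is a nonzero scalar multiple of an ordered product $\prod^{\leftarrow}_n Z_{A^n}^{[n]}$. Applying the homomorphism $\iota$, $\iota(Z(A))$ is a nonzero scalar times $\prod^{\leftarrow}_n\bigl(U_{A^n,n}\star T_{A^n,n}\bigr)$. Pushing all the $K$-factors to the left --- which, by the same moves as above, only introduces nonzero scalars and never creates new $U$-terms, because the $U$'s occurring have pairwise distinct degrees and already stand in decreasing order, so neither (\ref{relation of same lattice in twisted modified 1}) nor (\ref{relation of not-adjacent in twisted modified 7}) is ever invoked --- and merging them, one obtains $\iota(Z(A))$ as a nonzero scalar times a basis element $K_{\beta_{r-1},r}\star\cdots\star K_{\beta_l,l+1}\star U_{A^r,r}\star\cdots\star U_{A^l,l}$ of $\cm\ch_{ctw}(\ca)$ in the sense of Proposition \ref{proposition basis of modified} (padding with trivial factors $K_{0,j}=[0]$ to fill in any missing degrees). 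The $U$-part of this basis element records precisely the tuple $(A^n)_n$, hence determines the object $A$, so distinct objects yield distinct basis elements of $\cm\ch_{ctw}(\ca)$; thus $\iota$ carries a basis of $\cd\ch_{tw}(\ca)$ to a linearly independent set, and is therefore injective.

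The genuinely delicate part is the far-apart relation (\ref{relation twist tri ZnZm}) in the first stage: one must confirm that the pairwise interactions among $U_{A,m}$, $U_{B,n}$ and the two $K$-tails $T_{A,m}$, $T_{B,n}$ collapse to exactly $\sqrt{(\widehat A,\widehat B)^{(-1)^{m-n}}}$ and to nothing else. It is precisely this requirement that forces the alternating exponents $(-1)^j$ in the definition of $\iota$, and an off-by-one slip there would destroy the identity. The equal-degree and adjacent-degree relations, as well as the compatibility at the boundary degrees, then follow by a direct if lengthy application of Proposition \ref{proposition twisted multiplication in modified hall algebras}, along the lines of the argument in \cite{LinP}.
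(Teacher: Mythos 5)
Your argument is correct and follows exactly the route the paper intends: the paper gives no details for Theorem \ref{Theorem embedding}, deferring to the method of \cite{LinP}, which is precisely your two-step scheme of verifying the defining relations (\ref{relation twist tri ZnZn})--(\ref{relation twist tri ZnZm}) on the images of the generators via Proposition \ref{proposition twisted multiplication in modified hall algebras} and then deducing injectivity by sending the normal forms of Proposition \ref{direct sum to multiplication} to nonzero scalar multiples of pairwise distinct basis elements from Proposition \ref{proposition basis of modified}. Your flagging of the far-apart relation as the delicate point is apt, and the bookkeeping does close up: the two tail factors $K_{\pm\widehat A,n}$, $K_{\mp\widehat A,n+1}$ each contribute $\sqrt{(\widehat A,\widehat B)^{(-1)^{m-n}}}$ when $U_{B,n}$ passes them, while the tail--tail swaps contribute the compensating factor $\sqrt{(\widehat A,\widehat B)^{(-1)^{m+n+1}}}$, leaving exactly the quasi-commutation constant of (\ref{relation twist tri ZnZm}).
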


Following~ \cite{SX}, we define the \emph{completely extended twisted derived Hall algebra} $\cd\ch^{ce}_{tw}(\ca)$ of $\cd\ch_{tw}(\ca)$ and  show that the above embedding $\iota$ can be extended to an isomorphism between $\cd\ch^{ce}_{tw}(\ca)$ and $\cm\ch_{ctw}(\ca)$.

\begin{definition}\label{definition twisted extended derived Hall algebra}
The completely extended twisted derived Hall algebra $\cd\ch^{ce}_{tw}(\ca)$ is the associative and unital algebra generated by the set
$$\{Z_A^{[n]}, K_{\alpha}^{[n]}| A\in\Iso(\ca), \alpha\in K_0(\ca) \text{and}~n\in\mathbb{Z}\},$$
and subject to the relations (\ref{relation twist tri ZnZn})-(\ref{relation twist tri ZnZm}) and (\ref{relation in ex-de knZn})-(\ref{relation in ex-de knkm}).
\begin{eqnarray}
K_{\alpha}^{[n]} \vartriangle Z_A^{[n]}&=&\begin{cases}\sqrt{(\alpha, \widehat{A})}Z_A^{[n]} \vartriangle K_{\alpha}^{[n]}&n=0,1,\\Z_A^{[n]}\vartriangle K_{\alpha}^{[n]} &otherwise.\end{cases}\label{relation in ex-de knZn}\\
K_{\alpha}^{[n]} \vartriangle  K_{\beta}^{[n]}&=&K_{\alpha+\beta}^{[n]}\label{relation in ex-de knkn}\\
Z_A^{[n]} \vartriangle K_{\alpha}^{[n+1]}&=&\begin{cases}\sqrt{(\alpha, \widehat{A})}K_{\alpha}^{[n+1]} \vartriangle Z_A^{[n]}&n=0,-1,\\K_{\alpha}^{[n+1]}  \vartriangle Z_A^{[n]}&otherwise.\end{cases}\label{relation in ex-de Znkn+1}\\
K_{\alpha}^{[n]} \vartriangle Z_A^{[n+1]}&=&\begin{cases}\frac{1}{\sqrt{(\alpha, \widehat{A})}}Z_A^{[n+1]} \vartriangle K_{\alpha}^{[n]}&n=0,1,\\
Z_A^{[n+1]} \vartriangle K_{\alpha}^{[n]}&otherwise.\end{cases}\label{relation in ex-de knZn+1}\\
K_{\alpha}^{[n]} \vartriangle K_{\beta}^{[n+1]}&=&\sqrt{(\alpha, \beta)}K_{\beta}^{[n+1]} \vartriangle K_{\alpha}^{[n]}\label{relation in ex-de knKn+1}
\end{eqnarray}

If $|m-n|>1$, then
\begin{eqnarray}
K_{\alpha}^{[n]} \vartriangle Z_A^{[m]}&=&\begin{cases}\sqrt{(\alpha, \widehat{A})^{(-1)^m}}Z_A^{[m]} \vartriangle K_{\alpha}^{[0]}&n=0,|m|>1,\\
\frac{1}{\sqrt{(\alpha, \widehat{A})^{(-1)^m}}}Z_A^{[m]} \vartriangle K_{\alpha}^{[1]}&n=1,|m-1|>1,\\Z_{A}^{[m]} \vartriangle K_\alpha^{[n]}&otherwise;\end{cases}\label{relation in ex-de knZm}
\end{eqnarray}

\begin{eqnarray}
K_{\alpha}^{[n]} \vartriangle K_{\beta}^{[m]}&=&K_{\beta}^{[m]} \vartriangle K_{\alpha}^{[n]}.\label{relation in ex-de knkm}
\end{eqnarray}
\end{definition}

\begin{corollary}\label{proposition isomorphism of extended twisted derived and modified}
$\cd\ch^{ce}_{tw}(\ca)$ is isomorphic to $\cm\ch_{ctw}(\ca)$.
\end{corollary}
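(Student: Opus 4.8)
The plan is to extend the embedding $\iota$ of Theorem~\ref{Theorem embedding} to an algebra homomorphism $\iota\colon\cd\ch^{ce}_{tw}(\ca)\rightarrow\cm\ch_{ctw}(\ca)$ by additionally prescribing $\iota(K_\alpha^{[n]})=K_{\alpha,n}$ for all $\alpha\in K_0(\ca)$ and $n\in\mathbb{Z}$, and then to show that this extended map is bijective.

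First one must verify that this assignment is compatible with all the defining relations of $\cd\ch^{ce}_{tw}(\ca)$ in Definition~\ref{definition twisted extended derived Hall algebra}. The relations (\ref{relation twist tri ZnZn})--(\ref{relation twist tri ZnZm}), involving only the $Z_A^{[n]}$, already hold, since on the subalgebra generated by the $Z_A^{[n]}$ our map coincides with the homomorphism of Theorem~\ref{Theorem embedding}. It therefore remains to check the relations (\ref{relation in ex-de knZn})--(\ref{relation in ex-de knkm}) involving the new generators $K_\alpha^{[n]}$, which I would do by a direct computation inside $\cm\ch_{ctw}(\ca)$ using the commutation relations (\ref{relation of same lattice in twisted modified 1})--(\ref{relation of not-adjacent in twisted modified 10}) of Proposition~\ref{proposition twisted multiplication in modified hall algebras}. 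The mechanism is: $K_{\alpha,n}$ quasi-commutes trivially with every symbol supported in a degree $m$ with $|m-n|\ge 2$ and twists only past symbols in the two adjacent degrees $n\pm1$, whereas $\iota(Z_A^{[m]})$ is, up to an invertible scalar, the $\star$-product of $U_{A,m}$ with a telescoping product of the symbols $K_{\pm\widehat A,j}$ for $j$ in the range $1\le j\le m$ (if $m>0$), $m<j\le 0$ (if $m<0$), or empty (if $m=0$). Moving $K_{\alpha,n}$ across $\iota(Z_A^{[m]})$, the contributions of the factors of this $K$-tail sitting in degrees $n-1$ and $n+1$ combine with, or cancel, the contribution of $U_{A,m}$, and the distinguished role of the degrees $0$ and $1$ visible in (\ref{relation in ex-de knZn})--(\ref{relation in ex-de knZm}) is precisely the boundary effect of where the $K$-tails begin ($j=1$) or end ($j=0$). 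I expect this tracking of twisting scalars, together with the need to separate the various exceptional cases, to be the main — if essentially routine — obstacle.

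Granting that $\iota$ is a well-defined homomorphism, surjectivity is immediate: $K_{\alpha,n}=\iota(K_\alpha^{[n]})$ is in the image, and solving the defining formula of Theorem~\ref{Theorem embedding} for $U_{A,n}$ (all the $K$-tail factors being invertible) puts every $U_{A,n}$ in the image, so the image contains the generating set $\{U_{A,n},K_{\alpha,n}\}$ of $\cm\ch_{ctw}(\ca)$. For injectivity I would compare bases. On one side, $\cm\ch_{ctw}(\ca)$ carries the basis of Proposition~\ref{proposition basis of modified}; on the other side, following~\cite{SX}, $\cd\ch^{ce}_{tw}(\ca)$ is spanned by the analogous degree-ordered monomials $K_{\alpha_{r-1}}^{[r]}\vartriangle\cdots\vartriangle K_{\alpha_l}^{[l+1]}\vartriangle Z_{A^r}^{[r]}\vartriangle\cdots\vartriangle Z_{A^l}^{[l]}$. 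Normal-ordering the image of such a monomial in $\cm\ch_{ctw}(\ca)$ — using only the sum-free quasi-commutation relations, since the $U$-factors are already in strictly decreasing degree order — shows that $\iota$ sends it to a nonzero scalar multiple of a basis element of Proposition~\ref{proposition basis of modified}, and that the induced map on index data is a bijection (an invertible affine change of the $K$-exponents for fixed objects $A^i$). Hence $\iota$ carries the spanning set bijectively, up to nonzero scalars, onto a basis; this shows at once that the spanning set is a basis of $\cd\ch^{ce}_{tw}(\ca)$ and that $\iota$ is a linear isomorphism, hence the desired algebra isomorphism.
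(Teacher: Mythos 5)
Your proof is correct, and its first half --- extending $\iota$ by $K_\alpha^{[n]}\mapsto K_{\alpha,n}$ and verifying the relations (\ref{relation in ex-de knZn})--(\ref{relation in ex-de knkm}) by tracking the twisting scalars contributed by the $K$-tails of $\iota(Z_A^{[m]})$ --- is exactly what the paper does. Where you diverge is in proving bijectivity. The paper writes down an explicit inverse homomorphism $\eta:\cm\ch_{ctw}(\ca)\rightarrow\cd\ch^{ce}_{tw}(\ca)$, obtained by solving the formulas of Theorem~\ref{Theorem embedding} for $U_{A,n}$ and sending $K_{\alpha,n}\mapsto K_{\alpha}^{[n]}$, and then checks $\eta\widetilde{\iota}=1$ and $\widetilde{\iota}\eta=1$ on generators; this is short to state but silently requires verifying that $\eta$ itself respects all the defining relations of $\cm\ch_{ctw}(\ca)$ from Proposition~\ref{proposition twisted multiplication in modified hall algebras}, which the paper dismisses as routine. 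You instead get surjectivity for free from the generators and prove injectivity by normal-ordering the images of the degree-ordered monomials and matching them, up to nonzero scalars and an invertible affine change of the $K$-indices for fixed objects $A^i$, against the basis of Proposition~\ref{proposition basis of modified}. This avoids any well-definedness check for an inverse, at the price of the spanning claim for $\cd\ch^{ce}_{tw}(\ca)$ --- which, note, must be read off directly from the relations of Definition~\ref{definition twisted extended derived Hall algebra} rather than quoted from~\cite{SX}, since that reference only treats the singly extended algebra with one $K_\alpha$ --- and it yields as a by-product a PBW-type basis of $\cd\ch^{ce}_{tw}(\ca)$ that the paper never states. Both routes are sound.
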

\begin{proof}Firstly, we can extend the embedding $\iota$ in Theorem \ref{Theorem embedding} to a morphism
$\widetilde{\iota}$ between $\cd\ch^{ce}_{tw}(\ca)$ and $\cm\ch_{ctw}(\ca)$ by setting
$$\widetilde{\iota}(Z_{A}^{[n]})=\iota(Z_{A}^{[n]})~\text{and}~\widetilde{\iota}(K_{\alpha}^{[n]})=K_{\alpha, n}.$$
Similarly, one can also check that $\widetilde{\iota}$~is a homomorphism of algebras.
To prove it is an isomorphism, we  construct the inverse homomorphism
$$\eta: \cm\ch_{ctw}(\ca)\rightarrow\cd\ch^{ce}_{tw}(\ca)$$
 given by
$$K_{\alpha,n}\mapsto K_{\alpha}^{[n]},$$
$$U_{A,0}\mapsto Z_{A}^{[0]},$$
and for $n>0$,
$$U_{A,n}\mapsto\sqrt{\langle\widehat{A}, \widehat{A}\rangle^{n}}Z_{A}^{[n]} \vartriangle K_{(-1)^{n+1}\widehat{A}}^{[1]} \vartriangle K_{(-1)^{n}\widehat{A}}^{[2]} \vartriangle \cdots \vartriangle K_{-\widehat{A}}^{[n-1]} \vartriangle K_{\widehat{A}}^{[n]},$$
$$U_{A,-n}\mapsto\frac{1}{\sqrt{\langle\widehat{A}, \widehat{A}\rangle^{n}}}Z_{A}^{[-n]} \vartriangle K_{(-1)^{n+1}\widehat{A}}^{[0]} \vartriangle K_{(-1)^{n}\widehat{A}}^{[-1]} \vartriangle \cdots \vartriangle K_{-\widehat{A}}^{[-n+2]} \vartriangle K_{\widehat{A}}^{[-n+1]}.$$
It is routine to check that
$$\eta\widetilde{\iota}=1_{\cd\ch^{ce}_{tw}(\ca)}~\text{and}~\widetilde{\iota}\eta=1_{\cm\ch_{ctw}(\ca)}.$$
\end{proof}

\begin{theorem}\label{theorem invariance of naive and lattice}
Let $\cb$ be also a hereditary abelian $k$-category satisfying the finiteness conditions (1)-(2) in Section \ref{section introduction}. If there exists a derived equivalence $F: D^b(\ca)\rightarrow D^b(\cb)$, then we have the following isomorphism of algebras
$$\cn(\ca)\cong \cn(\cb).$$
\end{theorem}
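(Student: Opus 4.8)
The plan is to assemble the isomorphism from the pieces already established in the excerpt. First I would invoke Theorem \ref{theorem of isomorphism of naive lattice algebras and modified hall algebras}, which gives isomorphisms $\cn(\ca)\cong\cm\ch_{ctw}(\ca)$ and $\cn(\cb)\cong\cm\ch_{ctw}(\cb)$. Thus it suffices to produce an algebra isomorphism $\cm\ch_{ctw}(\ca)\cong\cm\ch_{ctw}(\cb)$. By Corollary \ref{proposition isomorphism of extended twisted derived and modified}, each componentwise twisted modified Ringel-Hall algebra is isomorphic to the completely extended twisted derived Hall algebra, so the task reduces further to exhibiting $\cd\ch^{ce}_{tw}(\ca)\cong\cd\ch^{ce}_{tw}(\cb)$.

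Next I would use the derived equivalence $F\colon D^b(\ca)\to D^b(\cb)$ at the level of the (unextended) twisted derived Hall algebras. By Remark \ref{remark derived invariance of derived hall algebras}, $F$ induces an isomorphism $\cd\ch_{tw}(\ca)\cong\cd\ch_{tw}(\cb)$, sending $Z_A^{[n]}$ to $T^n(Z(F(A)))$, where $Z(F(A))$ is the normal form from Proposition \ref{direct sum to multiplication} and $T$ is the degree-shift automorphism. Since $F$ is a triangle equivalence, it induces an isometry on Grothendieck groups with respect to the Euler form, which is exactly what is needed for this map to respect the twisting. I would then check that this isomorphism is compatible with the translation, i.e. it intertwines the shift automorphisms on the two sides, so that it behaves well with respect to the $\Z$-grading by degree that underlies the extension.

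The remaining step — and the one I expect to be the main obstacle — is to promote the isomorphism $\cd\ch_{tw}(\ca)\cong\cd\ch_{tw}(\cb)$ to an isomorphism of the \emph{completely extended} algebras $\cd\ch^{ce}_{tw}(\ca)\cong\cd\ch^{ce}_{tw}(\cb)$. The generators $K_\alpha^{[n]}$ for $\alpha\in K_0(\ca)$ and $n\in\Z$ must be sent to suitable elements of $\cd\ch^{ce}_{tw}(\cb)$; the natural choice is to define the image of $K_{\widehat A}^{[n]}$ using the ``$K$-part'' extracted from the normal form $Z(F(A))$ together with the shift $T$, extending multiplicatively in $\alpha$. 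Concretely, $F$ sends the class $\widehat A$ to $F(\widehat A)=\sum_i(-1)^i\widehat{F(A)^{-i}}$ in $K_0(\cb)$, and one sets $K_\alpha^{[n]}\mapsto$ the corresponding product of $K_{\widehat{F(A)^j}}^{[n+j]}$ with the appropriate signs. One then must verify that this assignment, together with the map on $Z$-generators, respects all the relations (\ref{relation twist tri ZnZn})--(\ref{relation twist tri ZnZm}) and (\ref{relation in ex-de knZn})--(\ref{relation in ex-de knkm}) of Definition \ref{definition twisted extended derived Hall algebra}; the bookkeeping of signs and degree shifts in the quasi-commutation relations is where the real work lies. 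An inverse is constructed symmetrically from $F^{-1}$, and the two composites are checked to be the identity on generators.

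In summary, the chain of isomorphisms is
\begin{equation*}
\cn(\ca)\;\cong\;\cm\ch_{ctw}(\ca)\;\cong\;\cd\ch^{ce}_{tw}(\ca)\;\cong\;\cd\ch^{ce}_{tw}(\cb)\;\cong\;\cm\ch_{ctw}(\cb)\;\cong\;\cn(\cb),
\end{equation*}
where the first and last come from Theorem \ref{theorem of isomorphism of naive lattice algebras and modified hall algebras}, the second and fourth from Corollary \ref{proposition isomorphism of extended twisted derived and modified}, and the middle one is built from the derived equivalence $F$ via Remark \ref{remark derived invariance of derived hall algebras} after extending by the torus generators. The proof thus follows the same strategy as in \cite{LinP}, the novelty being that one works with the componentwise (rather than the full) twisting, so that the relevant extended derived Hall algebra is the completely extended one of Definition \ref{definition twisted extended derived Hall algebra}.
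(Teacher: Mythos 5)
Your overall architecture is exactly the paper's: reduce via Theorem \ref{theorem of isomorphism of naive lattice algebras and modified hall algebras} and Corollary \ref{proposition isomorphism of extended twisted derived and modified} to showing $\cd\ch^{ce}_{tw}(\ca)\cong\cd\ch^{ce}_{tw}(\cb)$, handle the $Z$-generators by Remark \ref{remark derived invariance of derived hall algebras}, and then extend to the torus generators. The problem is in the one place where you depart from the paper: your proposed image of $K_{\widehat A}^{[n]}$ as a product of degree-shifted generators $K_{\widehat{F(A)^j}}^{[n+j]}$ is not viable as stated. Take $A_1\in\ca_0$ and $A_2\in\ca_{-1}$ (in the notation $\ca_i=\{A\mid F(A)\subset\cb[i]\}$), with $F(A_1)=B_1$ and $F(A_2)=B_2[-1]$. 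Your rule sends $K_{\widehat{A_1}}^{[0]}\mapsto K_{\widehat{B_1}}^{[0]}$ and $K_{\widehat{A_2}}^{[0]}\mapsto K_{\widehat{B_2}}^{[1]}$, but in $\cd\ch^{ce}_{tw}(\cb)$ relation (\ref{relation in ex-de knKn+1}) gives $K_{\widehat{B_1}}^{[0]}\vartriangle K_{\widehat{B_2}}^{[1]}=\sqrt{(\widehat{B_1},\widehat{B_2})}\,K_{\widehat{B_2}}^{[1]}\vartriangle K_{\widehat{B_1}}^{[0]}$, while in the source $K_{\widehat{A_1}}^{[0]}\vartriangle K_{\widehat{A_2}}^{[0]}=K_{\widehat{A_1}+\widehat{A_2}}^{[0]}=K_{\widehat{A_2}}^{[0]}\vartriangle K_{\widehat{A_1}}^{[0]}$ is symmetric. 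Since $\Hom(A_2,A_1)$ and $\Ext^1(A_1,A_2)$ need not vanish here, $(\widehat{B_1},\widehat{B_2})\neq 1$ in general, so relation (\ref{relation in ex-de knkn}) is violated and the map is not even well defined on $K_0(\ca)$. No choice of ``appropriate signs'' in the exponents fixes a discrepancy that lives in the commutator.

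The paper's resolution is simpler and is worth contrasting: it keeps the degree fixed and sets $K_{\alpha}^{[n]}\mapsto K_{F(\alpha)}^{[n]}$, where $F(\alpha)\in K_0(\cb)$ is the image under the isometry $K_0(\ca)\to K_0(\cb)$ (so for $A\in\ca_{-i}$ one gets $F(\widehat A)=(-1)^{i}\widehat B$, the sign being absorbed into the Grothendieck-group class rather than into a degree shift). Relations (\ref{relation in ex-de knkn}), (\ref{relation in ex-de knKn+1}) and (\ref{relation in ex-de knkm}) are then immediate from the isometry, and the compatibility with the $Z$-generators is checked case by case: one decomposes any $A$ as $\bigoplus A_i$ with $A_i\in\ca_i$, reduces to $A\in\ca_{-i}$ with $T^n(Z(F(A)))=Z_B^{[n+i]}$, and verifies that the sign $(-1)^{i}$ carried by $F(\widehat A)$ exactly matches the sign $(-1)^{n+i}$ appearing in relations (\ref{relation in ex-de knZn})--(\ref{relation in ex-de knZm}) for the shifted degree. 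This verification is the bulk of the paper's proof and is precisely the part your proposal defers; with your definition of the map it could not be completed, so you should replace the degree-shifted assignment by the degree-preserving one before attempting it.
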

\begin{proof}
By Theorem \ref{theorem of isomorphism of naive lattice algebras and modified hall algebras} and Corollary \ref{proposition isomorphism of extended twisted derived and modified}, it suffices to prove that $\cd\ch^{ce}_{tw}(\ca)$ is isomorphic to $\cd\ch^{ce}_{tw}(\cb)$.

First of all, there is an isomorphism of Grothendieck group $$K_0(\ca)\rightarrow K_0(D^b(\ca))\rightarrow K_0(D^b(\cb))\rightarrow K_0(\cb)$$ induced by $F$. By abuse of notation this isomorphism is still denoted by $F$ and it preserves the bilinear form, {\it i.e.} for any $\alpha,\beta\in K_0(\ca)$ we have $$\langle\alpha,\beta\rangle_{\ca}=\langle F(\alpha),F(\beta)\rangle_{\cb},$$ where $\langle-,-\rangle_\ca$ and $\langle-,-\rangle_\cb$ denote the Euler form of $K_{0}(\ca)$ and $K_{0}(\cb)$ respectively.

Then the induced map $F_*:\cd\ch^{ce}_{tw}(\ca)\rightarrow\cd\ch^{ce}_{tw}(\cb)$ is given by
$$Z_{A}^{[n]}\mapsto T^n(Z(F(A)))~\text{and}~K_{\alpha}^{[n]}\mapsto K_{F_{(\alpha)}}^{[n]},$$
where $Z(F(A))$ is defined in Proposition \ref{direct sum to multiplication}.
It remains  to verify that it is a homomorphism of algebras. However, by Remark \ref{remark derived invariance of derived hall algebras} one can get that $F_*$ preserve the relations (\ref{relation twist tri ZnZn})-(\ref{relation twist tri ZnZm}). And it is easy to check that the relations (\ref{relation in ex-de knkn}), (\ref{relation in ex-de knKn+1}) and (\ref{relation in ex-de knkm}) are preserved.

Following~ \cite{Kap} and ~\cite{Cr}, let $\ca_i$  be the full subcategory of $\ca$ with objects $\{A\in\ca|F(A)\subset \cb[i]\}$. For objects $A_i\in\ca_i, A_j\in\ca_j$, we have $\Hom(A_i, A_j)=0$ but for $j=i$ or $j=i+1$, $\Ext^1_{\ca}(A_i, A_j)=0$ but for $j=i$ or $j=i-1$. Thus $$Z_{A_i\oplus A_j}^{[n]}=\sqrt{\langle\widehat{A_i}, \widehat{A_j}\rangle}Z_{A_i}^{[n]} \vartriangle Z_{A_j}^{[n]},$$ for $A_i\in\ca_i, A_j\in\ca_j~\text{and}~i<j.$
Since any object $A\in\ca$ can be decomposed to a direct sum $\bigoplus_{i\in \Z}A_i$ with $A_i\in\ca_i$, and then for any $n\in\mathbb{Z}$, $Z_A^{[n]}$ can be written in the form of
$$Z_A^{[n]}=\sqrt{\prod_{i<j}\langle\widehat{A_i}, \widehat{A_j}\rangle}\prod_{i\in S} Z_{A_i}^{[n]},$$ where the indices in the product $\prod\limits_{i\in S} Z_{A_i}^{[n]}$ are in increasing order. Therefore we only need to check the relations (\ref{relation in ex-de knZn}), (\ref{relation in ex-de Znkn+1}), (\ref{relation in ex-de knZn+1}) and (\ref{relation in ex-de knZm}) are preserved in the case that $A\in\ca_i$ and $\alpha\in K_0(\ca)$ for any $i\in\mathbb{Z}$.

Assume that $A\in\ca_{-i}$ and $F(A)=B[-i]$ for some object $B\in\cb$, then $Z(F(A))=Z_{B}^{[i]}$. In the following we give the proof for the relation~(\ref{relation in ex-de knZn}) and omit the others.
We separate the proof  into the following three cases.

Case (1): $n=0$.

If $i=-1$,
\begin{eqnarray*}
F_*(K_\alpha^{[0]}) \vartriangle F_*(Z_A^{[0]})&=&K_{F(\alpha)}^{[0]} \vartriangle Z_{B}^{[-1]}\\
&=&\frac{1}{\sqrt{(F(\alpha),\widehat{B})}}Z_{B}^{[-1]} \vartriangle K_{F(\alpha)}^{[0]}\\
&=&\frac{1}{\sqrt{(F(\alpha),F(A)[-1])}}Z_{B}^{[-1]} \vartriangle K_{F(\alpha)}^{[0]}\\
&=&\sqrt{(\alpha,\widehat{A})}F_*(Z_A^{[0]}) \vartriangle F_*(K_\alpha^{[0]}).
\end{eqnarray*}

If $i=0$,
\begin{eqnarray*}
F_*(K_\alpha^{[0]}) \vartriangle F_*(Z_A^{[0]})&=&K_{F(\alpha)}^{[0]} \vartriangle Z_{B}^{[0]}\\
&=&\sqrt{(F(\alpha),\widehat{B})}Z_{B}^{[0]} \vartriangle K_{F(\alpha)}^{[0]}\\
&=&\sqrt{(\alpha,\widehat{A})}F_*(Z_A^{[0]}) \vartriangle F_*(K_\alpha^{[0]}).
\end{eqnarray*}

If $i=1$,
\begin{eqnarray*}
F_*(K_\alpha^{[0]}) \vartriangle F_*(Z_A^{[0]})&=&K_{F(\alpha)}^{[0]} \vartriangle Z_{B}^{[1]}\\
&=&\frac{1}{\sqrt{(F(\alpha),\widehat{B})}}Z_{B}^{[1]} \vartriangle K_{F(\alpha)}^{[0]}\\
&=&\sqrt{(\alpha,\widehat{A})}F_*(Z_A^{[0]}) \vartriangle F_*(K_\alpha^{[0]}).
\end{eqnarray*}

If $i>1$ or $i<-1$,
\begin{eqnarray*}
F_*(K_\alpha^{[0]}) \vartriangle F_*(Z_A^{[0]})&=&K_{F(\alpha)}^{[0]} \vartriangle Z_{B}^{[i]}\\
&=&\sqrt{(F(\alpha),\widehat{B})^{(-1)^i}}Z_{B}^{[i]} \vartriangle K_{F(\alpha)}^{[0]}\\
&=&\sqrt{(\alpha,\widehat{A})}F_*(Z_A^{[0]}) \vartriangle F_*(K_\alpha^{[0]}).
\end{eqnarray*}

case (2): $n=1$.

If $i=-1$,
\begin{eqnarray*}
F_*(K_\alpha^{[1]}) \vartriangle F_*(Z_A^{[1]})&=&K_{F(\alpha)}^{[1]} \vartriangle Z_{B}^{[0]}\\
&=&\frac{1}{\sqrt{(F(\alpha),\widehat{B})}}Z_{B}^{[0]} \vartriangle K_{F(\alpha)}^{[1]}\\
&=&\sqrt{(\alpha,\widehat{A})}F_*(Z_A^{[1]}) \vartriangle F_*(K_\alpha^{[1]}).
\end{eqnarray*}

If $i=0$,
\begin{eqnarray*}
F_*(K_\alpha^{[1]}) \vartriangle F_*(Z_A^{[1]})&=&K_{F(\alpha)}^{[1]} \vartriangle Z_{B}^{[1]}\\
&=&\sqrt{(F(\alpha),\widehat{B})}Z_{B}^{[1]} \vartriangle K_{F(\alpha)}^{[1]}\\
&=&\sqrt{(\alpha,\widehat{A})}F_*(Z_A^{[1]}) \vartriangle F_*(K_\alpha^{[1]}).
\end{eqnarray*}

If $i=1$,
\begin{eqnarray*}
F_*(K_\alpha^{[1]}) \vartriangle F_*(Z_A^{[1]})&=&K_{F(\alpha)}^{[1]} \vartriangle Z_{B}^{[2]}\\
&=&\frac{1}{\sqrt{(F(\alpha),\widehat{B})}}Z_{B}^{[2]} \vartriangle K_{F(\alpha)}^{[1]}\\
&=&\sqrt{(\alpha,\widehat{A})}F_*(Z_A^{[1]}) \vartriangle F_*(K_\alpha^{[1]}).
\end{eqnarray*}

If $i>1$ or $i<-1$,
\begin{eqnarray*}
F_*(K_\alpha^{[1]}) \vartriangle F_*(Z_A^{[1]})&=&K_{F(\alpha)}^{[1]} \vartriangle Z_{B}^{[i+1]}\\
&=&\frac{1}{\sqrt{(F(\alpha),\widehat{B})^{(-1)^{i+1}}}}Z_{B}^{[i+1]} \vartriangle K_{F(\alpha)}^{[0]}\\
&=&\sqrt{(\alpha,\widehat{A})}F_*(Z_A^{[1]}) \vartriangle F_*(K_\alpha^{[1]}).
\end{eqnarray*}

Case (3): $n\neq 0, 1$.

\begin{eqnarray*}
F_*(K_\alpha^{[n]}) \vartriangle F_*(Z_A^{[n]})&=&K_{F(\alpha)}^{[n]} \vartriangle Z_{B}^{[n+i]}\\
&=&Z_{B}^{[n+i]} \vartriangle K_{F(\alpha)}^{[n]}\\
&=&F_*(Z_A^{[n]}) \vartriangle F_*(K_\alpha^{[n]}).
\end{eqnarray*}
This completes the proof.

\end{proof}

\end{document}